\newtheorem{theorem}{Theorem}[section]
\newtheorem{lemma}[theorem]{Lemma}
\newtheorem{proposition}[theorem]{Proposition}
\newtheorem{definition}[theorem]{Definition}
\newtheorem{remark}[theorem]{Remark}
\newtheorem{corollary}[theorem]{Corollary}
\def\N{{\mathbb N}}
\def\Z{{\mathbb Z}}
\def\R{{\mathbb R}}
\def\T{{\mathbb T}}
\def\W{{\mathcal{W}^2(\mathbb{T})}}
\def\to{\rightarrow}
\def\eps{\varepsilon}
\newcommand{\argmin}{\operatornamewithlimits{argmin}}
\newcommand{\argmax}{\operatornamewithlimits{argmax}}
\title{Convergence of a one dimensional Cahn-Hilliard equation with degenerate mobility}
\author{Matias G. Delgadino}
\date{\today}
\begin{document}

\maketitle
\begin{abstract}
We consider a one dimensional periodic forward-backward parabolic equation, regularized by a non-linear fourth order term of order $\eps^2\ll 1$. This equation is known in the literature as Cahn-Hilliard equation with degenerate mobility. Under the hypothesis of the initial data being well prepared, we prove that as $\eps\to0$, the solution converges to the solution of a well-posed degenerate parabolic equation. The proof exploits the gradient flow nature of the equation in $\W$ and utilizes the framework of convergence of gradient flows developed by Sandier-Serfaty (\cite{sandier2004gamma}, \cite{serfaty2011gamma}). As an incidental, we study fine energetic properties of solutions to the Thin-film equation $\partial_t\nu=(\nu\nu_{xxx})_x$.
\end{abstract}
\section{Introduction}
Given a smooth non-convex potential $W:\R_+\to\R$, we are interested in the properties of solutions $\nu^\eps$ to
\begin{equation}\label{eq:gradeps}
\left\{\begin{array}{clr}
\partial_t\nu&=(\nu (W'(\nu)-\epsilon^2\nu_{xx})_x)_x&\mbox{in $(0,\infty)\times\T$}\\
\nu(0)&=\nu_i^\eps&\mbox{on $\{0\}\times\T$}.
\end{array}\right.
\end{equation}
and more specifically, in their behavior as $\eps\to0^+$, where $\T$ denotes the one-dimension flat torus $\R/\Z$.	

Equation~\eqref{eq:gradeps} is known in the literature as the Cahn-Hilliard equation (\cite{cahn1958free},\cite{novick1984nonlinear}, \cite{lisini2012cahn}). The function $\nu^\eps$ models the concentration of one of two phases in a system undergoing phase separation. Mathematically, this equation could be considered as a fourth order regularization of a forward-backward parabolic equation, by the fourth order term $-\eps^2(\nu\nu_{xxx})_x$. In the case where $W$ vanishes identically, we are left with a fourth order parabolic equation
$$
\partial_t \nu=(m(\nu)\nu_{xxx})_x
$$
known as the Thin-film equation, with mobility $m(\nu)=\nu$, which is interesting on its own (see for instance
\cite{kinderlehrer1980introduction}, \cite{mellet2015thin}, \cite{bertozzi1996lubrication}). Note that, the Dirichlet Energy is a Lyapunov functional and that when $m(\nu)=\nu$, the equation is formally the gradient flow of this Energy under the $\W$ metric. This observation was made in the seminal paper by Otto \cite{otto1998lubrication} and has been exploited for some generalizations in \cite{dolbeault2009new}, \cite{matthes2009family} and \cite{lisini2012cahn}. 

The main result of this paper is the fact that, under some assumptions (see Theorem~\ref{thm:conv}), $\nu^\eps$ converges, as $\eps\to0$, to the unique solution $\nu_0$ of the following well-posed degenerate parabolic equation
\begin{equation}\label{eq:grad}
\left\{\begin{array}{cl}
\partial_t\nu&=(\nu (W^{**\prime}(\nu))_x)_x\\
\nu(0)&=\nu_i,
\end{array}\right.
\end{equation}
where $W^{**}$ denotes the convex envelope of $W$.

The mathematical intuition behind this convergence comes from the fact that, formally at least, we know that $\nu^\eps$ is the gradient flow of
\begin{equation}\label{eq:Eeps1}
E^\eps[\mu]=\int_\T\frac{\eps^2}{2}|\mu_x|^2+W(\mu)\;dx,
\end{equation}
while $\nu_0$ is the gradient flow of
$$
E^{**}[\mu]=\int_\T W^{**}(\mu)\;dx,
$$
with respect to $\W$, and it is somewhat classical that the energy $E^\eps$ $\Gamma$-converges to $E^{**}$ in $\W$. Unfortunately, it is well known that the  $\Gamma$-convergence of the energy is not enough to prove the convergence of the gradient flows.

Indeed, to be able to prove the convergence of the gradient flows we need an additional condition on the gradient of the energy. A sufficient condition for Hilbert spaces was given in the paper by Sandier and Serfaty \cite{sandier2004gamma}, which was later extended to metric spaces by Serfaty in \cite{serfaty2011gamma}. This additional condition is usually written as follows:
\begin{equation}\label{eq:gammaliminf}
\Gamma-\liminf_{\eps\to0^+}|\nabla E^\eps|\ge|\nabla E^{**}|\mbox{ (see Section~\ref{sec:notations} for definitions)},
\end{equation}
and proving this inequality is always the hard part of the Sandier-Serfaty approach. However, in our case $|\nabla E^\eps|$ is not well understood, so we need to introduce a different quantity for which we prove a condition similar to \eqref{eq:gammaliminf} (see Theorem~\ref{thm:lscgrad}).

The framework of Sandier-Serfaty has been applied to an array of diverse problems. To name a few we have: Allen-Cahn \cite{mugnai2007allen}, Cahn-Hilliard \cite{le2008gamma}, \cite{bellettini2012convergence}, non-local interactions energies \cite{craig2015convergence}, TV flow \cite{colombo2012passing}  and Fokker Plank \cite{arnrich2012passing}. The most relevant reference for this paper was written by Belletini, Bertini, Mariani and Novaga \cite{bellettini2012convergence}, where they consider the convergence of the one dimensional Cahn-Hilliard equation on the Torus with mobility equal to one:
\begin{equation}\label{eq:belletini}
\partial_t\nu=(W'(\nu)-\epsilon^2\nu_{xx})_{xx}.
\end{equation}
We actually borrow some of the notations and the ideas on how to track the oscillations of the solution. The main difference between \cite{bellettini2012convergence} and our work is that \eqref{eq:belletini} is a gradient flow of \eqref{eq:Eeps1} in the Hilbert space $H^{-1}(\T)$, instead of the metric space $\W$. Besides bringing some non-trivial technical issues, working with a degenerate mobility coefficient also means that the estimates degenerate when the solution is near zero; this actually turns out to be a major issue that keeps showing up in the Thin Film equation literature as well.

In the spirit of being self-contained, we give a brief introduction into the framework developed by Ambrosio, Gigli and Savare for gradient flows in $\W$ (see Appendix~\ref{ap:wasserstein}). We try to outline all of the tools and terminology used in the paper, but it is in no way complete and the interested reader should definitely take the time to read \cite{ambrosio2006gradient}. 

A word of caution is that the framework developed in \cite{ambrosio2006gradient} can not be applied to the functional $E^\eps$, as it is neither $\lambda$-convex in the sense given at \cite{ambrosio2006gradient} (or the relaxed notion \cite{kamalinejad2012optimal}), nor regular. In fact, the subdifferential of $E^\eps$ is not really well understood; no matter how regular the measure is, if it vanishes at some point, it has not been proven that the natural candidate is indeed a subdifferential. 

We deal with this setback by considering Otto's approach in \cite{otto1998lubrication}, which constructs solutions to the equation as the limit of Minimizing Movements, an idea that was originated by De Giorgi. In the case of $E^\eps$, this has been made rigorous in \cite{lisini2012cahn}, where the authors are even able to prove a uniform $L^2_t(H^2_x)$ estimate for the constant interpolant of the discrete approximations, by using a discrete version of the entropy dissipation inequality (for the continuum case see \cite{bertozzi1996lubrication}). In this paper we go a bit further and we obtain an Energy inequality (see \eqref{eq:epsgradflow}), by defining a non-standard functional $\mathcal{G}^\eps$ (see Section~\ref{sec:notations}), which we prove to be lower semicontinuous in $H^2$ (see Lemma~\ref{lem:lscG}) and which agrees with the size of the subdifferential, when we know $E^\eps$ to be strongly subdifferentiable and $\mu$ to be regular enough. To our knowledge, this is a completely novel result in the literature and gives a starting point to understand the $\W$ gradient flows of energies involving derivatives. Shedding some light onto this topic will be part of the author's upcoming work.

Once we are able to prove the existence of an appropriate solution to our equation, the main obstacle we encounter, when we try to prove the convergence, is oscillatory behavior, known as the wrinkling phenomenon. Numerical simulations show that the functions $\nu^\eps$ tend to oscillate quickly in the whole of the unstable set 
$$
\Sigma=cl(\{W> W^{**}\}).
$$ 
However, in this paper we only prove that the wrinkling phenomenon occurs in a subset of $\Sigma$ and we do not explore further if it can be proven analytically that when oscillations occur, they actually encompass the whole of $\Sigma$. 

We prove that oscillations only occur inside of $\Sigma$ by proving that $d(\nu^\eps,\Sigma)$ is uniformly lower-semicontinuous in $\eps$ (see Corollary~\ref{cor:lsc}), which allows us to derive a uniform $H^1_{loc}$ estimate away from the unstable set (see Proposition~\ref{prop:H1loc}). The degenerate diffusion at $\{\nu^\eps=0\}$ makes the control near zero very subtle. Only a careful study of the behavior of the solution near zero can rule out uncontrolled jumps (see proof of Theorem~\ref{thm:osc}).

It is the intention of this paper that the proofs make a clear connection between where the oscillations can occur and the tangent lines of the graph of $W$. In short, in the regions where the tangent lines do not cross the graph of $W$, the function cannot have large oscillations (see \eqref{eq:condUlambda}). In this way, the function $W^{**}$ appears naturally and does not seem to be only a mathematical artifact of $\Gamma$-convergence. 

As usual with the framework of \cite{sandier2004gamma}, \cite{serfaty2011gamma}, we have to make an assumption on the initial data being well prepared with respect to the energy, meaning that
$$
\lim_{\eps\to0^+} E^\eps[\nu_i^\eps]
=E^{**}[\nu_i].
$$
In our case, the well preparedness can be interpreted as the fact that the approximations $\nu^\eps_i$ stays away from $\Sigma$, so the convergence we prove only tells us that asymptotically the dynamic keeps it that way. With this assumption, we are missing how the wrinkling phenomenon is actually affecting the dynamic in the limit, which is a really interesting question on its own, but needs to be analyzed more carefully with other types of techniques.

The paper is organized as follows: The rest of this Section deals with motivation. Section~\ref{sec:notations} provides the definitions and hypothesis of the objects we work with, and introduces a suitable notion of solution to \eqref{eq:gradeps}. Section~\ref{sec:result} contains the statements of the main result of the convergence (see Theorem~\ref{thm:conv}) and the main auxiliary result of the lower semicontinuity of the size of the gradients (see Theorem~\ref{thm:lscgrad}). Section~\ref{sec:osc} presents and proves the result on where can oscillations occur (see Theorem~\ref{thm:osc}). Section~\ref{sec:H1} proves that away from $\Sigma$ the functions are in $H^1$ (see Proposition~\ref{prop:H1loc}). Section~\ref{sec:prooflscgrad} proves Theorem~\ref{thm:lscgrad}. Section~\ref{sec:proofthmconv} proves Theorem~\ref{thm:conv}. Appendix~\ref{ap:wasserstein} gives the necessary background of gradient flows in $\W$. Appendix~\ref{sec:lscG} finishes the proof of the existence of an appropriate solution to \eqref{eq:gradeps} and proves the lower semicontinuity of $\mathcal{G}^\eps$ (see Lemma~\ref{lem:lscG}).

\subsection{Motivation}

Our original motivation for studying \eqref{eq:gradeps} came from a model for biological aggregation introduced in \cite{topaz2006nonlocal} which we describe now:

We consider $\nu(x,t)$ a population density that moves with velocity $v(x,t)$, where $x$, $v\in \R^n$, $t\ge0$. Then, $\nu$ satisfies the standard conservation equation, with initial population $\nu_0$
\begin{equation}\label{eq:model}
\left\{\begin{array}{rl}
\partial_t\nu+\nabla\cdot (v\nu)=&0\\
\nu(x,0)=&\nu_0.\end{array}\right.
\end{equation}

The model assumes that the velocity depends only on properties of $\nu$ at the current time and can be written as the sum of an aggregation and a dispersal term:
\begin{equation}\label{eq:v}
v=v_a+v_d.
\end{equation}

For aggregation, a sensing mechanism that degrades over distance, is hypothesized on the organisms. In the simplest case, the sensing function associated with an individual at position $x$ is given by
$$
s(x)=\int_{\R^n} K(x-y)\nu(y)\;dy=K*\nu(x),
$$
where the kernel $K$ is typically radially symmetric, compactly supported and of unit mass. Individuals aggregate by climbing gradients of the sensing function, so that the attractive velocity is given by:
\begin{equation}\label{eq:va}
v_a=\nabla K*\nu(x).
\end{equation}

Dispersal is assumed to arise as an anti-crowding mechanism and operates over a much shorter length scale. It is considered to be local, go in the opposite direction of population gradients and increase with density. For example we can take the dispersive velocity given by:
\begin{equation}\label{eq:vd}
v_d=-\nu\nabla\nu 
\end{equation}
(more generally $v_d=-f(\nu)\nabla\nu$).

Combining \eqref{eq:model}, \eqref{eq:v}, \eqref{eq:va} and \eqref{eq:vd}, we obtain the equation
\begin{equation}\label{eq:K3}
\left\{\begin{array}{rl}
\partial_t\nu+\nabla\cdot (\nu(\nabla K*\nu-\nu\nabla\nu))=&0\\
\nu(x,0)=&\nu_0.\end{array}\right.
\end{equation}

Now, by re-scaling, we want to consider what happens to a large population as we zoom out, over a large period of time. We thus set
$$
\int_{\R^n} \nu_0 \;dx=\eps^{-n},
$$
for some $\eps\ll 1$ and we re-scale time and space as follows:
\begin{equation}\label{eq:alpha}
\nu^\eps(x,t)=\nu\left(\frac{x}{\eps},\frac{t}{\eps^2}\right),
\end{equation}
the scaling in $x$ is chosen such that $\int\nu_0^\eps=1$.
Using \eqref{eq:K3}, we obtain the following equation for $\nu^\eps$:
\begin{equation}\label{eq:nueps}
\left\{\begin{array}{rl}
\partial_t\nu^\eps+\nabla\cdot (\nu^\eps(\nabla K^\eps*\nu^\eps-\nu^\eps\nabla\nu^\eps))=&0\\
\nu^\eps(x,0)=&\nu^\eps_0,\end{array}\right.
\end{equation}
where $K^\eps=\frac{1}{\eps^n}K(\frac{x}{\eps})$ is an approximation of the $\delta$ measure.

Adding and subtracting $\nabla\cdot(\nu^\eps\nabla \nu^\eps)$, we can rewrite \eqref{eq:nueps} as

\begin{equation}\label{eq:keps}
\partial_t\nu^\eps+\nabla\cdot (\nu^\eps(\nabla \nu^\eps-\nu^\eps\nabla\nu^\eps+(\nabla K^\eps*\nu^\eps-\nabla \nu^\eps)))=0.
\end{equation}
Assuming $\nu^\eps$ to be smooth, and taking a Taylor expansion of $\nu^\eps$, we get that
\begin{equation}\label{eq:Delta}
K^\eps*\nu^\eps(x)- \nu^\eps(x)=\eps^2k_0 \Delta \nu^\eps(x)+\mathcal{O}(\eps^4),
\end{equation}
where 
$$
k_0=\int |x|^2K(x)\;dx.
$$
Replacing \eqref{eq:Delta} in \eqref{eq:keps}, disregarding the $\mathcal{O}(\eps^4)$ term, we finally obtain \eqref{eq:gradeps}:
$$
\left\{\begin{array}{rl}
\partial_t\nu^\eps+\nabla\cdot (\nu^\eps(-\nabla W'(\nu^\eps)+\eps^2k_0 \nabla\Delta \nu^\eps))=&0\\
\nu^\eps(x,0)=&\nu^\eps_0,\end{array}\right.
$$
where $W'(x)=\frac{x^2}{2}-x$.

The Cahn-Hilliard equation we are studying in this paper is thus an approximation of the non-local equation \eqref{eq:nueps}. Unfortunately, the techniques used in this paper to control the oscillations of solutions to \eqref{eq:gradeps} could not be generalized to deal with solutions of \eqref{eq:nueps}. The main issue being that the non-locality does not allow us to integrate exactly against the derivative of the solution. We are thus unable, at the present time, to fully describe the behavior of the solutions of \eqref{eq:nueps} as $\eps\to0$. The only result that carries through is a uniform in $\eps$, $L^\infty$ estimate for the solutions of \eqref{eq:keps}, which follows almost exactly as Lemma~\ref{prop:infty}.

\begin{remark}
It is worth noticing that \eqref{eq:keps} is the gradient flow of
$$
F^\eps[\nu]=\int \frac{\nu^3(x)}{6}-\frac{1}{2}K^\eps*\nu(x)\nu(x)\;dx,
$$
with respect to the metric induced by the $\mathcal{W}^2$ distance. By adding and subtracting $\frac{\nu^2(x)}{2}$, in the expression, we obtain, after some calculations,
\begin{equation}\label{eq:Feps}
F^\eps[\nu]=\int W(\nu)\;dx+\frac{1}{4}\int\int K^\eps(x-y)(\nu(x)-\nu(y))^2\;dxdy,
\end{equation}
with $W(x)=\frac{x^3}{6}-\frac{x^2}{2}$.

The semi-norm
$$
\frac{1}{4}\int\int K^\eps(x-y)(\nu(x)-\nu(y))^2\;dxdy,
$$
is, up to a constant, a smooth non-local approximation of
$$
\frac{\eps^2}{2}\int |\nabla \nu|^2,
$$
therefore \eqref{eq:Feps} can be considered as a smooth non-local approximation of \eqref{eq:Eeps1}.
\end{remark}
\begin{remark}
Different scalings of time in \eqref{eq:alpha} can be considered. The case of $\frac{t}{\eps}$ is related, in the limit $\eps\to0$, to motion by mean curvature (see \cite{le2008gamma}).
\end{remark}
\begin{remark}
A similar heuristic relationship between \eqref{eq:gradeps} and the non-local model in \cite{topaz2006nonlocal} has been drawn independently in \cite{bernoff2015biological}.
\end{remark}

\section{Notation and Assumptions}\label{sec:notations}
Throughout the paper, we always consider measures $\mu\in\mathcal{P}(\T)$
that are absolutely continuous with respect to the Lebesgue measure, we do not make any distinction between the measure and its density. 

Also, we use the term sequence loosely: it may denote family of measures labeled by the continuous parameter $\eps$. 

\subsection{Assumptions on $W$}

We assume that $W$ is in $C^2([0,\infty),[0,\infty))$; we denote by $W^{**}$ its convex envelope. We define the auxiliary function $Q$ such that 
\begin{equation}\label{eq:Q'}
Q'(y)=yW'(y)-W(y),
\end{equation} 
we use the notation with a prime, because its derivative is related with the second derivative of $W$, namely
\begin{equation}\label{eq:Q''}
Q''(y)=yW''(y).
\end{equation}
Moreover we assume that $W$ has the following properties:
\begin{itemize}
\item (H1) There exists a constant $C>0$ such that for every $y\in\R$ 
\begin{equation}
|Q'(y)|\le C(1+W(y)).
\end{equation}
and
\begin{equation}
|W'(y)|\le C(1+W(y)).
\end{equation}
\item (H2) $\lim_{y\to +\infty}Q'(y)=+\infty$
\item (H3) The unstable set $\Sigma=cl(\{W> W^{**}\}\cup\{0\})=\cup_{i=1}^p \Sigma_i$, where $p\in \N$ and $\Sigma_i=[a_i,b_i]$, with $a_{i+1}>b_i$. 

The first interval could be degenerate in the sense of $a_1=b_1=0$.
As the dynamics near zero will be special, we will distinguish a value
\begin{equation}\label{eq:m0}
m_0=
\left\{\begin{array}{lr}
b_1+1&\mbox{if $p=1$}\\
\frac{b_1+a_2}{2}&\mbox{if $p\ge2$}.
\end{array}\right.
\end{equation}

\begin{figure}[H]\label{fig:1}
\centering
\def\svgwidth{12cm}
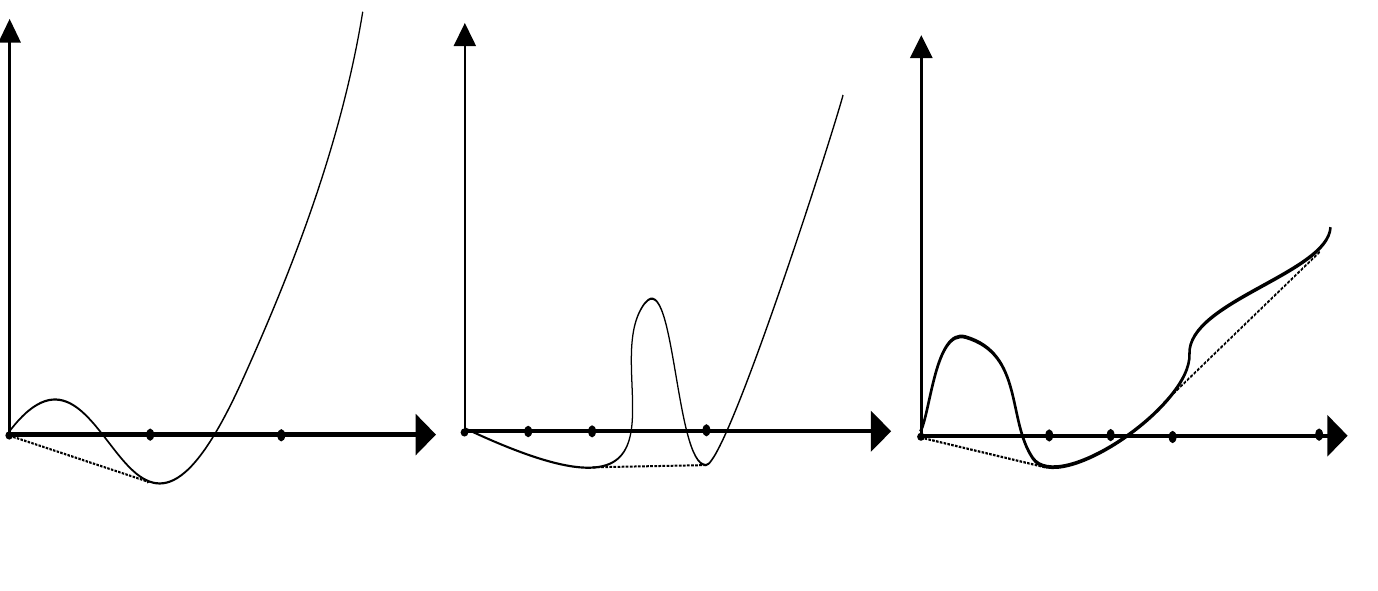
 \caption{From left to right: case $p=1$ with $a_1\ne b_1$, case $p=2$ with $a_1=b_1=0$ and case $p=2$ with $a_1\ne b_1$.}
\end{figure}

\item (H4) Given $K\subset \Sigma^c$ compact, $\inf_{A\in K}W''(A)>0$. Equivalently, $W''(p)>0$ for any $p\in\Sigma^c$.

\end{itemize}
\begin{remark}\label{rem:Q'=0}
Equation \eqref{eq:gradeps} and \eqref{eq:grad} are not affected by taking adding an affine function to $W$, so without loss of generality we will consider the case $W(0)=0$ and $W'(0)=0$.
\end{remark}
\subsection{Functionals $E^\eps$, $E^{**}$, $\mathcal{G}^\eps$, $|\nabla E^{**}|$}
For any $\eps>0$, we define 
$$
E^\eps:\mathcal{P}(\T)\to[0,+\infty]
$$
the functional
\begin{equation*}
E^\eps[\nu]=
\left\{\begin{array}{lr}
\int_\T\frac{\eps^2}{2}|\nu_x|^2+W(\nu)\; dx&\mbox{if }\nu\in H^1(\T)\\
+\infty&elsewhere.
\end{array}\right.
\end{equation*}

Formally, the subdifferential of $E^\eps$ at $\mu$, with respect to $\mathcal{W}^2$ is given by
$$
\partial_{\mathcal{W}^2} E^\eps [\mu]=\nabla (W'(\mu)-\eps^2\Delta\mu).
$$
and we have
$$
|\partial_{\mathcal{W}^2} E^\eps|(\mu)=\int_\T \mu|\nabla (W'(\mu)-\eps^2\Delta\mu)|^2\;dx.
$$
However, to our knowledge, unless $\mu$ is assumed to be strictly positive, nobody has proven that $E^\eps$ are actually sub-differentiable at $\mu$, no matter how regular $\mu$ is.

For this reason, we introduce a functional
$$
\mathcal{G}^\eps(\cdot):\mathcal{P}(\T)\to[0,+\infty],
$$
which will play the role of $|\partial E^\eps|$; we define it, using an auxiliary map $G^\eps$ and an auxiliary set $\mathcal{T}^\eps$. For $\mu\in H^1(\T)$, we define
\begin{equation}\label{eq:Geps}
G^\eps(\mu)=\mu W'(\mu)-W(\mu)+\frac{3\eps^2}{2}|\mu_x|^2-\eps^2(\mu\mu_x)_x
\end{equation}
(formally at least, we have $G^\eps(\mu)_x=\mu (W'(\mu)-\eps^2\mu_{xx})_x$)
and
$$
\mathcal{T}^\eps(\mu)=\{g\in L^2(\T):G^\eps(\mu)_x=\sqrt{\mu}g\}
$$
(possibly empty).
We then set
\begin{equation}\label{eq:calGeps}
\mathcal{G}^\eps(\mu)=
\left\{\begin{array}{lr}
\inf_{g\in\mathcal{T}^\eps(\mu)}\; ||g||_2 & \mbox{if }\mathcal{T}^\eps(\mu)\ne \varnothing,\\
+\infty&\mbox{otherwise}.
\end{array}\right.
\end{equation}
\begin{remark}\label{rem:relationGepssubdiff}
We always have the inequality
$$
\mathcal{G}^\eps(\mu)\le \int_\T \mu|(W'(\mu)-\eps^2\mu_{xx})_x|^2\;dx.
$$
Indeed, if the right hand side is infinite, there is nothing to prove, and if the right hand side is finite, then $\sqrt{\mu}(W'(\mu)-\eps^2 \mu_{xx})_x\in\mathcal{T}^\eps(\mu)$ and the inequality clearly holds.
\end{remark}
\begin{remark}\label{rem:cumbersome}
The idea, behind this cumbersome definition, is that 
$$
||G^\eps(\mu)_x||_1\le\mathcal{G}^\eps(\mu)
$$ 
and when $\mu$ is regular in $\{\mu>0\}$, then
$$
\int_{\mu>0} \mu|(W'(\mu)-\eps^2\mu_{xx})_x|^2\;dx\le\mathcal{G}(\mu).
$$
The fact that the integral is only on the set $\{\mu>0\}$ is a standard inconvenience in the thin film equation literature and is the source of many difficulties in proving the existence of curves of maximal slope of $E^\eps$. 
\end{remark}

We also define
$$
E^{**}:\mathcal{P}(\T)\to[0,+\infty]
$$
the functional
\begin{equation*}
E^{**}[\nu]=
\left\{\begin{array}{lr}
\int_\T W^{**}(\nu)\; dx&\mbox{if }\nu\in L^1(\T)\\
+\infty&elsewhere.
\end{array}\right.
\end{equation*}
$E^{**}$ is convex (see \cite{mccann1997convexity}) and its subdifferential is given by
$$
\partial_{\mathcal{W}^2} E^{**} [\mu]=\nabla W^{**\prime}(\mu).
$$

Therefore, we define the functional
\begin{equation}\label{eq:nablaE**}
|\nabla E^{**}|:\mathcal{P}(\T)\to[0,+\infty]
\end{equation}
by
\begin{equation*}
|\nabla E^{**}(\mu)|=
\left\{\begin{array}{lr}
\left(\int_\T \mu |(W^{**\prime}(\mu))_x|^2\; dx\right)^\frac{1}{2}&\mbox{if }(Q^{**\prime}(\mu))\in W^{1,1}(\T)\\
+\infty&elswhere,
\end{array}\right.
\end{equation*}
where $Q^{**\prime}(z)=zW^{**\prime}(z)-W^{**}(z)$. For more details, see Section 10.4.3 in \cite{ambrosio2006gradient}.
\begin{remark}
The subtlety of the doubling condition on $W^{**}$ is omitted, because we deal with measures that are bounded.
\end{remark}
\begin{remark}
Because $E^{**}$ is convex, we have that $|\nabla E^{**}|$ is a strong upper gradient. (See Definition~\ref{def:convex} and Definition~\ref{def:strongup})
\end{remark}

\subsection{Existence of $\nu^\eps$ and $\nu_0$}

Given $\eps>0$ and an initial condition $\nu^\eps_i$, such that
$$
E^\eps[\nu^\eps_i]<+\infty,
$$
we consider $\nu^\eps(x,t)$ solution of equation \eqref{eq:gradeps} given by the following proposition:
\begin{proposition}\label{prop:existenceeps}
Given $\nu_i^\eps\in\mathcal{P}_2(\T)$, such that $E^\eps[\nu_i^\eps]<\infty$, then there exists $\nu^\eps\in L^\infty((0,\infty);H^1(\T)) \cap L^2_{loc}((0,\infty);H^2(\T))\cap C^{1,4}_{loc}(\{\nu^\eps>0\})$ such that
\begin{equation}\label{eq:epsflow}
\int_0^\infty\int_\T \nu^\eps\phi_t\;dxdt-\int_0^\infty\int_\T (\eps^2\nu^\eps_{xx}-W'(\nu^\eps))(\nu^\eps\phi_x)_x\;dxdt=0,
\end{equation}
for every $\phi\in C^\infty_c((0,\infty)\times \T)$.

Moreover,
\begin{equation}\label{eq:epsgradflow}
E^\eps[\nu^\eps(t)]
+\frac{1}{2}\int_0^t\mathcal{G}(\nu^\eps)^2\; ds
+ \frac{1}{2}\int_0^t|\nu^{\eps\prime}|^2\;ds \le
E^\eps[\nu_i^\eps]\;\;\;\forall t>0,
\end{equation}
where $|\nu^{\eps\prime}|$ is the size of the metric derivative of $\nu^\eps$ with respect to $\W$ (See Definition~\ref{thm:metricder}).
\end{proposition}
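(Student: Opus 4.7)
The plan is to construct $\nu^\eps$ through a De Giorgi-Jordan-Kinderlehrer-Otto minimizing movement scheme in $\W$, and to extract \eqref{eq:epsgradflow} by exploiting the algebraic identity $\mu(W'(\mu)-\eps^2\mu_{xx})_x = G^\eps(\mu)_x$ together with the lower semicontinuity Lemma~\ref{lem:lscG}. Fix a step $\tau>0$, set $\nu^{\eps,\tau}_0=\nu^\eps_i$, and inductively define
\begin{equation*}
\nu^{\eps,\tau}_{n+1}\in\argmin_{\mu\in\mathcal{P}(\T)}\left\{E^\eps[\mu]+\frac{1}{2\tau}\mathcal{W}^2(\mu,\nu^{\eps,\tau}_n)^2\right\}.
\end{equation*}
Existence of minimizers follows from the direct method, using that $H^1(\T)\hookrightarrow C^{1/2}(\T)$ makes the sublevels of $E^\eps$ compact in $\mathcal{P}(\T)$. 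Following \cite{lisini2012cahn}, a discrete entropy-dissipation argument promotes each $\nu^{\eps,\tau}_n$ to $H^2(\T)$ with the uniform bound $\sum_n\tau\|\nu^{\eps,\tau}_{n,xx}\|_{L^2}^2\le C$, which is the key input both for justifying the computations below and for the eventual compactness.

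The Euler-Lagrange equation, obtained by perturbing the minimizer along a smooth velocity $\xi_x$, reads
\begin{equation*}
\int_\T\frac{T^\tau_n-\mathrm{id}}{\tau}\,\xi_x\,\nu^{\eps,\tau}_n\;dx=-\int_\T\bigl(W'(\nu^{\eps,\tau}_n)-\eps^2\nu^{\eps,\tau}_{n,xx}\bigr)(\nu^{\eps,\tau}_n\xi_x)_x\;dx,
\end{equation*}
where $T^\tau_n$ is the optimal transport map from $\nu^{\eps,\tau}_n$ to $\nu^{\eps,\tau}_{n-1}$. The point is that the right-hand side rearranges, after integration by parts (legal once $\nu^{\eps,\tau}_n\in H^2$), into $\int_\T G^\eps(\nu^{\eps,\tau}_n)_x\,\xi\,dx$. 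Equivalently, in the sense of distributions,
\begin{equation*}
G^\eps(\nu^{\eps,\tau}_n)_x=\sqrt{\nu^{\eps,\tau}_n}\,g^\tau_n,\qquad g^\tau_n:=-\sqrt{\nu^{\eps,\tau}_n}\,\frac{T^\tau_n-\mathrm{id}}{\tau}\in L^2(\T),
\end{equation*}
so $g^\tau_n\in\mathcal{T}^\eps(\nu^{\eps,\tau}_n)$ and $\mathcal{G}^\eps(\nu^{\eps,\tau}_n)^2\le\|g^\tau_n\|_{L^2}^2=\mathcal{W}^2(\nu^{\eps,\tau}_n,\nu^{\eps,\tau}_{n-1})^2/\tau^2$. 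Combining this with the classical discrete energy inequality $E^\eps[\nu^{\eps,\tau}_N]+\sum_{n=1}^N\frac{1}{2\tau}\mathcal{W}^2(\nu^{\eps,\tau}_n,\nu^{\eps,\tau}_{n-1})^2\le E^\eps[\nu_i^\eps]$ yields a discrete version of \eqref{eq:epsgradflow} with $\mathcal{G}^\eps$ already present on the left-hand side.

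To pass to the limit, I would form piecewise-constant and McCann interpolants $\nu^{\eps,\tau}$. The uniform bounds $\nu^{\eps,\tau}\in L^\infty_tH^1_x\cap L^2_tH^2_x$, together with the $\frac{1}{2}$-Hölder equicontinuity in $\mathcal{W}^2$ coming from the energy inequality, allow an Aubin-Lions extraction giving $\nu^{\eps,\tau}\to\nu^\eps$ strongly in $L^2_tC^{1,\alpha}_x$ and weakly in $L^2_tH^2_x$. Standard arguments (as in \cite{lisini2012cahn}) then identify the limit as a distributional solution of \eqref{eq:epsflow}, while Solonnikov-type interior Schauder estimates on the positivity set deliver the $C^{1,4}_{loc}(\{\nu^\eps>0\})$ regularity. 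Lower semicontinuity of the metric derivative term is classical in $\W$, so the only delicate point in obtaining \eqref{eq:epsgradflow} is controlling $\int_0^t\mathcal{G}^\eps(\nu^\eps)^2\,ds$ from below by $\liminf_{\tau\to 0}\int_0^t\mathcal{G}^\eps(\nu^{\eps,\tau})^2\,ds$. I expect this to be the main obstacle: $\mathcal{G}^\eps$ is defined as an infimum over the possibly-empty, $\mu$-dependent affine space $\mathcal{T}^\eps(\mu)$, and the degenerate factor $\sqrt{\mu}$ makes the set unstable under weak limits at points where $\nu^\eps$ vanishes. This is precisely the content of Lemma~\ref{lem:lscG}, whose lower semicontinuity in $H^2$ will be invoked pointwise in time and combined with Fatou's lemma to close the argument.
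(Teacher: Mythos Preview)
Your JKO construction and the identification $G^\eps(\nu^{\eps,\tau}_n)_x=\sqrt{\nu^{\eps,\tau}_n}\,g^\tau_n$ via the Euler--Lagrange equation are fine, and your use of Lemma~\ref{lem:lscG} and Fatou at the end is exactly what the paper does. However, there is a genuine gap in the discrete energy step. The ``classical discrete energy inequality'' you invoke,
\[
E^\eps[\nu^{\eps,\tau}_N]+\sum_{n=1}^N\frac{1}{2\tau}\mathcal{W}^2(\nu^{\eps,\tau}_n,\nu^{\eps,\tau}_{n-1})^2\le E^\eps[\nu_i^\eps],
\]
carries a \emph{single} dissipation sum. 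In the limit this produces \emph{either} $\tfrac12\int_0^t|\nu^{\eps\prime}|^2\,ds$ \emph{or}, via your bound $\mathcal{G}^\eps(\nu^{\eps,\tau}_n)^2\le\mathcal{W}^2(\nu^{\eps,\tau}_n,\nu^{\eps,\tau}_{n-1})^2/\tau^2$, the term $\tfrac12\int_0^t\mathcal{G}^\eps(\nu^\eps)^2\,ds$ --- but not both simultaneously, which is exactly what \eqref{eq:epsgradflow} requires. You are short by a factor of two, and this cannot be recovered from the piecewise-constant scheme alone.

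The paper closes this gap by using the De~Giorgi \emph{variational} interpolation $\overline\mu_\tau(t)=\argmin_\rho\{d_2^2(\mu^n_\tau,\rho)+2(t-(n-1)\tau)E^\eps[\rho]\}$ for $t\in((n-1)\tau,n\tau)$, rather than the piecewise-constant one. Lemma~3.2.2 in \cite{ambrosio2006gradient} then yields the sharp two-term inequality
\[
E^\eps[\mu^n_\tau]+\frac{1}{2\tau}\sum_{k=1}^n d_2^2(\mu^k_\tau,\mu^{k-1}_\tau)+\frac{1}{2}\int_0^{n\tau}|\partial E^\eps(\overline\mu_\tau(t))|^2\,dt\le E^\eps[\nu_i^\eps],
\]
in which the local slope appears \emph{in addition} to the kinetic sum. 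One then needs $\mathcal{G}^\eps(\overline\mu_\tau(t))\le|\partial E^\eps(\overline\mu_\tau(t))|$: the paper obtains this from the strong subdifferentiability of $E^\eps$ at $\overline\mu_\tau(t)$ (Lemma~3.1.3 in \cite{ambrosio2006gradient}), the identification of the subdifferential in Lemma~\ref{lem:subdiff}, and Remark~\ref{rem:relationGepssubdiff}. After that, the passage $\tau\to0$ proceeds as you described. Your Euler--Lagrange route to producing an element of $\mathcal{T}^\eps$ is a legitimate alternative to the subdifferential argument at each fixed time, but it must be run on the De~Giorgi interpolant, not just on the endpoints $\nu^{\eps,\tau}_n$, to recover the missing half of the dissipation.
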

\begin{remark}
We cannot claim that $\nu^\eps$ is a curve of maximal slope, as defined in \cite{ambrosio2006gradient}, since we do not prove that $\mathcal{G}^\eps$ is an upper gradient of $E^\eps$ (See Definition~\ref{def:strongup}).
\end{remark}
\begin{remark}\label{rem:regnueps}
From the inclusion $H^2\subset C^{1,\frac{1}{2}}$, we get that for almost every $t$, $\nu^\eps(t)\in C^{1,\frac{1}{2}}$.
\end{remark}
\begin{proof} The
existence of $\nu^\eps\in L^\infty((0,\infty);H^1(\T)) \cap L^2_{loc}((0,\infty);H^2(\T))$, that satisfies \eqref{eq:epsflow} is a particular case of Theorem 1 in \cite{lisini2012cahn}. More precisely, $\nu^\eps$ is constructed as any accumulation point of the discrete interpolation of the solutions of the appropriate JKO scheme. The fact that $\nu^\eps\in C^{1,4}_{loc}(\{\nu^\eps>0\})$ follows from Schauder estimates (see \cite{bernis1990higher}).

The proof of \eqref{eq:epsgradflow} which plays a central role in the proof of our main result is somewhat more technical, and is detailed in Appendix~\ref{sec:lscG}.
\end{proof}
As the functional $E^{**}$ is convex then, using Theorem~\ref{thm:lambdaconvexflows}, we denote by $\nu_0$ the unique gradient flow of $E^{**}$ emanating from $\nu_i$. Moreover, $\nu_0$ is also the unique distributional solution to
\begin{equation}\label{eq:flow}
\left\{\begin{array}{cl}
\partial_t\nu&=((W^{**\prime}(\nu))_x\nu)_x\\
\nu(0)&=\nu_i.
\end{array}\right.
\end{equation}
It can be characterized by either the Energy inequality, also known as the maximal slope condition
\begin{equation}
E^{**}[\nu(t)]
+\frac{1}{2}\int_0^t|\nabla E^{**}(\nu)|^2\;ds
+ \frac{1}{2}\int_0^t|\nu'|^2\;ds\le
E^{**}[\nu_i]\;\;\;\forall t>0,
\end{equation}
or the Energy equality
\begin{equation}\label{eq:gradflow}
E^{**}[\nu(t)]
+\frac{1}{2}\int_0^t|\nabla E^{**}(\nu)|^2\;ds
+ \frac{1}{2}\int_0^t|\nu'|^2\;ds=
E^{**}[\nu_i]\;\;\;\forall t>0.
\end{equation}

\section{Statement of the Result}\label{sec:result}
The main result of this paper is the following:
\begin{theorem}\label{thm:conv}
Let $\{\nu^\eps_i\}_\eps$, $\nu_i\in\mathcal{P}(\T)$ be such that
$$
E^\eps[\nu^\eps_i]<+\infty\;\;\;\mbox{and}\;\;\;E^{**}[\nu_i]<+\infty.
$$
Suppose that, 
\begin{equation}\label{eq:in}
\lim_{\eps\to0^+}\nu^\eps_i=\nu_i\;\;\;\mbox{in $\W$}
\end{equation}
and
\begin{equation}\label{eq:Ein}
\lim_{\eps\to0^+}E^{\eps}[\nu^\eps_i]=E^{**}[\nu_i].
\end{equation}
Then, for any $T>0$,
$$
\lim_{\eps\to0^+}\nu^{\eps}=\nu_0\;\;\;\mbox{in $C^0([0,T];\W)$},
$$
$$
\lim_{\eps\to0^+}\int_0^T\left(\mathcal{G}^\eps(\nu^\eps(t))-|\nabla E^{**}(\nu_0(t))| \right)^2dt=0
$$
and
$$
\lim_{\eps\to0^+}E^\eps[\nu^\eps(t)]=E^{**}[\nu_0(t)]\;\;\; \forall t\ge0,
$$
where $\nu^\eps$ is the solution of \eqref{eq:gradeps} given by Proposition~\ref{prop:existenceeps} with initial condition $\nu^\eps_i$ and $\nu_0$ is the unique Gradient flow of $E^{**}$ (solution of \eqref{eq:flow}) with initial condition $\nu_i$, with respect to the metric $\W$. 
\end{theorem}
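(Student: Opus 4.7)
The plan is to follow the Sandier--Serfaty scheme for convergence of gradient flows, using the energy inequality \eqref{eq:epsgradflow} in place of the energy equality, and Theorem~\ref{thm:lscgrad} in place of the usual $\Gamma$-liminf of $|\nabla E^\eps|$. The first step is compactness: from \eqref{eq:epsgradflow} and the well-preparedness \eqref{eq:Ein}, both $E^\eps[\nu^\eps(t)]$ and $\int_0^T|\nu^{\eps\prime}|^2\,ds$ are bounded uniformly in $\eps$. Cauchy--Schwarz then gives equi-Hölder continuity of $t\mapsto\nu^\eps(t)$ in $\W$ with exponent $\tfrac12$, and since $\mathcal{P}(\T)$ is $\W$-compact, Arzelà--Ascoli produces a subsequential limit $\nu\in C^0([0,T];\W)$, with $\nu(0)=\nu_i$ by \eqref{eq:in}.

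Next I would pass to the $\liminf$ in each term of \eqref{eq:epsgradflow}. Since $W^{**}\le W$, the narrow lower semicontinuity of the convex functional $E^{**}$ gives $\liminf_\eps E^\eps[\nu^\eps(t)]\ge E^{**}[\nu(t)]$. The $L^2$-lower semicontinuity of the metric derivative under uniform $\W$-convergence (see Appendix~\ref{ap:wasserstein}) yields $\liminf_\eps\int_0^t|\nu^{\eps\prime}|^2\,ds\ge\int_0^t|\nu'|^2\,ds$. For the slope term, applying Theorem~\ref{thm:lscgrad} pointwise in time together with Fatou's lemma gives
$$
\liminf_{\eps\to 0^+}\int_0^t\mathcal{G}^\eps(\nu^\eps(s))^2\,ds\ge\int_0^t|\nabla E^{**}(\nu(s))|^2\,ds.
$$
Combining these three inequalities with $\lim_\eps E^\eps[\nu_i^\eps]=E^{**}[\nu_i]$ on the right produces the maximal-slope inequality
$$
E^{**}[\nu(t)]+\tfrac12\int_0^t|\nabla E^{**}(\nu)|^2\,ds+\tfrac12\int_0^t|\nu'|^2\,ds\le E^{**}[\nu_i].
$$
By the convexity of $E^{**}$ and Theorem~\ref{thm:lambdaconvexflows}, the gradient flow of $E^{**}$ starting from $\nu_i$ is unique; hence $\nu=\nu_0$, and the whole family $\{\nu^\eps\}$ converges to $\nu_0$ in $C^0([0,T];\W)$ without further extraction.

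To upgrade to the pointwise energy and $L^2$ slope convergences, I would exploit the energy equality \eqref{eq:gradflow} satisfied by $\nu_0$: adding the three $\liminf$-inequalities above and comparing with \eqref{eq:gradflow} forces each of them to be an equality. This yields $\lim_\eps E^\eps[\nu^\eps(t)]=E^{**}[\nu_0(t)]$ for almost every $t$, and the extension to every $t\ge 0$ follows from the monotonicity of $t\mapsto E^\eps[\nu^\eps(t)]$ (obtained by applying \eqref{eq:epsgradflow} starting from an arbitrary time, using the semigroup nature of the JKO construction) together with the continuity of $t\mapsto E^{**}[\nu_0(t)]$. Equality in the slope term reads $\int_0^T\mathcal{G}^\eps(\nu^\eps)^2\,ds\to\int_0^T|\nabla E^{**}(\nu_0)|^2\,ds$, and combining this $L^2$-norm convergence with the pointwise $\liminf$ of Theorem~\ref{thm:lscgrad} via a Radon--Riesz-type argument in $L^2(0,T)$ yields the desired convergence $\int_0^T\bigl(\mathcal{G}^\eps(\nu^\eps)-|\nabla E^{**}(\nu_0)|\bigr)^2\,dt\to 0$. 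The main obstacle in this plan is of course Theorem~\ref{thm:lscgrad} itself, which is the $\W$-analogue of the Sandier--Serfaty $\Gamma$-liminf of slopes and where the wrinkling analysis (Theorem~\ref{thm:osc} and Proposition~\ref{prop:H1loc}) together with the tangent-line geometry of $W$ versus $W^{**}$ enter; once that ingredient is in hand, the scheme above is a careful but routine instance of the abstract convergence framework.
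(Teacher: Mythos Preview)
Your proposal is correct and follows the same Sandier--Serfaty scheme as the paper, with one unnecessary detour. The paper identifies the limit slightly differently: rather than invoking the maximal-slope characterisation directly, it chains Young's inequality with the strong-upper-gradient property of $|\nabla E^{**}|$ (Theorem~\ref{thm:convexstrongupgrad}) to obtain $\limsup_\eps E^\eps[\nu^\eps(t)]\le E^{**}[\mu(t)]$, and then combines this with the $\Gamma$-liminf $\liminf_\eps E^\eps[\nu^\eps(t)]\ge E^{**}[\mu(t)]$ to get energy convergence \emph{at every} $t$ in one stroke. Your three $\liminf$ inequalities in fact already hold at every fixed $t$, and comparison with the energy equality \eqref{eq:gradflow} forces each to be an equality at every $t$, not merely almost everywhere; so your detour through monotonicity of $t\mapsto E^\eps[\nu^\eps(t)]$ is unnecessary. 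That is fortunate, because the justification you give for monotonicity---applying \eqref{eq:epsgradflow} from an arbitrary starting time via a ``semigroup nature of the JKO construction''---is not available here: Proposition~\ref{prop:existenceeps} only states the energy inequality from $t=0$, and the JKO limit is not shown to be unique or to have the semigroup property. Your Radon--Riesz argument for the $L^2$ convergence of slopes is correct and is actually more explicit than the paper's own proof, which simply records that the chain of inequalities becomes equalities.
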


As in \cite{mugnai2007allen}, \cite{le2008gamma}, \cite{bellettini2012convergence}, \cite{craig2015convergence}, \cite{colombo2012passing}  and \cite{arnrich2012passing} the key step in the proof of Theorem~\ref{thm:conv} is to prove the lower-semicontinuity in the convergence of $\mathcal{G}^\eps$ to $|\nabla E^{**}|$, more specifically we need to prove:

\begin{theorem}\label{thm:lscgrad}
Let $\{\rho^\eps\}_{\eps>0}$ be a sequence of functions in $\mathcal{P}(\T)$ such that $\rho^\eps\in C^1(\T)\cap C^4_{loc}\{\rho^\eps>0\}$, $\rho^\eps\to\rho_0$ in $\W$ and $\sup_\eps E^\eps(\rho^\eps)<\infty$, then
\begin{equation}\label{eq:lscgrad}
\liminf_{\eps\to0^+}\mathcal{G}^\eps(\rho^\eps)\geq |\nabla E^{**}|(\rho_0).
\end{equation}
\end{theorem}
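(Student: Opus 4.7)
The plan is as follows. Since~\eqref{eq:lscgrad} is trivial when the liminf is infinite, I assume $L := \liminf_\eps \mathcal{G}^\eps(\rho^\eps) < \infty$ and pass to a subsequence along which $\mathcal{G}^\eps(\rho^\eps) \to L$; by the definition of $\mathcal{G}^\eps$, I can choose $g^\eps \in \mathcal{T}^\eps(\rho^\eps)$ with $\sqrt{\rho^\eps}\, g^\eps = G^\eps(\rho^\eps)_x$ and $\|g^\eps\|_{L^2} \to L$. By weak $L^2$-compactness, a further subsequence satisfies $g^\eps \rightharpoonup g^0$.

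The key reduction is a duality in the spirit of Sandier--Serfaty. For every $\psi \in C^\infty(\T)$, testing the identity $\sqrt{\rho^\eps}\, g^\eps = G^\eps(\rho^\eps)_x$ against $\psi$ and integrating by parts gives
$$\Big|\int_\T \psi_x\, G^\eps(\rho^\eps)\,dx\Big| = \Big|\int_\T \psi \sqrt{\rho^\eps}\, g^\eps\,dx\Big| \le \|g^\eps\|_{L^2}\Big(\int_\T \rho^\eps \psi^2\,dx\Big)^{1/2}.$$
If I can prove the distributional convergence $G^\eps(\rho^\eps) \to Q^{**\prime}(\rho_0)$ (modulo an $\eps$-independent additive constant, which does not affect $\partial_x$), then using the weak-$*$ convergence $\rho^\eps \to \rho_0$ in $L^\infty$ (from $\W$-convergence together with the uniform $L^\infty$ bound of Lemma~\ref{prop:infty}) and the chain-rule identity $(Q^{**\prime}(\rho_0))_x = \rho_0\,(W^{**\prime}(\rho_0))_x$, the inequality passes to the limit as
$$\Big|\int_\T \psi\,\rho_0\, (W^{**\prime}(\rho_0))_x\,dx\Big| \le L\,\Big(\int_\T \rho_0\psi^2\,dx\Big)^{1/2}.$$
Maximizing over $\psi$ by Riesz duality in $L^2(\rho_0\,dx)$ then produces $|\nabla E^{**}|(\rho_0) \le L$, which is~\eqref{eq:lscgrad}.

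For the identification of $\lim G^\eps(\rho^\eps)$, I first observe that $\|G^\eps(\rho^\eps)_x\|_{L^1} = \|\sqrt{\rho^\eps}\, g^\eps\|_{L^1} \le \|g^\eps\|_{L^2}$ by Cauchy--Schwarz and $\int\rho^\eps = 1$. Combined with an $L^1$ bound on $G^\eps(\rho^\eps)$ itself --- obtained from hypothesis~(H1), the uniform energy bound, and the 1D Poincar\'e--Wirtinger inequality --- this places $\{G^\eps(\rho^\eps)\}$ in a bounded subset of $BV(\T)$. Extracting a further subsequence, $G^\eps(\rho^\eps) \to G^0$ in $L^1(\T)$ for some $G^0 \in BV(\T)$. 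On the stable set $\{\rho_0 \in \Sigma^c\}$, Proposition~\ref{prop:H1loc} supplies uniform $H^1_{loc}$ bounds on $\rho^\eps$; this upgrades the weak convergence to strong $L^2_{loc}$ convergence $\rho^\eps \to \rho_0$ and gives $\eps\,\rho^\eps_x \to 0$ in $L^2_{loc}$, killing the $\eps^2$-terms in $G^\eps$ and yielding $G^0 = Q'(\rho_0) = Q^{**\prime}(\rho_0)$ on $\{\rho_0 \in \Sigma^c\}$.

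The hard part, which I expect to be the main obstacle, is showing that $G^0$ is constant on each maximal interval $I$ of $\{\rho_0 \in \Sigma_i\}$, equal to the common value of $Q^{**\prime}$ on $\Sigma_i$; continuity of $Q^{**\prime}(\rho_0)$ at the boundary of such an interval (built into the construction of $W^{**}$) then pins the global additive constant to zero. On such $I$, I would exploit the factorization $G^\eps(\rho^\eps)_x = \rho^\eps\, \partial_x \pi^\eps$ with chemical potential $\pi^\eps := W'(\rho^\eps) - \eps^2\, \rho^\eps_{xx}$, so that $\int_I \rho^\eps |\partial_x \pi^\eps|^2 \le \|g^\eps\|_{L^2}^2$ stays bounded. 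When $I$ is disjoint from $\{\rho_0 = 0\}$ (i.e., $a_i > 0$), Corollary~\ref{cor:lsc} gives $\rho^\eps \ge a_i/2$ on $I$ for $\eps$ small, yielding an $H^1(I)$-bound on $\pi^\eps$ modulo constants; a Young-measure analysis combined with $\eps^2\rho^\eps_{xx} \to 0$ in $H^{-1}(I)$ identifies the $L^2$-limit of $\pi^\eps$ as the constant Maxwell slope, so $G^\eps(\rho^\eps)_x \to 0$ distributionally on $I^\circ$ and $G^0$ is constant there. The delicate case $i=1$ with $a_1 = 0$ loses this lower bound on $\rho^\eps$; here one must invoke the sharper description of $\rho^\eps$ near $\{\rho^\eps = 0\}$ provided by Theorem~\ref{thm:osc} to rule out pathological concentration of $G^\eps(\rho^\eps)$ at the degeneracy, adapting to the $\W$-setting the $H^{-1}$-based strategy of~\cite{bellettini2012convergence}.
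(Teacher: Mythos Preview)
Your duality framework is sound, but you have made the problem harder than necessary by trying to identify the global limit of $G^\eps(\rho^\eps)$ on all of $\T$. The step you flag as ``the hard part'' --- proving that $G^0$ equals the constant $Q^{**\prime}(\Sigma_i)$ on each maximal interval of $\{\rho_0 \in \Sigma_i\}$ --- is not completed in your proposal: the Young-measure identification of $\lim \pi^\eps$ with the Maxwell slope is only asserted (it requires an equipartition-type argument balancing $W'(\rho^\eps)$ against $\eps^2\rho^\eps_{xx}$ in the oscillatory regime, which you have not supplied), and the degenerate case $a_1 = 0$ is deferred to a vague adaptation of~\cite{bellettini2012convergence}. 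It is not clear that Theorem~\ref{thm:osc} by itself yields enough control near $\{\rho^\eps=0\}$ for that adaptation to go through.

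The paper sidesteps this entirely by localizing the duality to compact $K \subset \Omega := \{\rho_0 \notin \Sigma\}$ from the outset. On such $K$, Proposition~\ref{prop:disttosigma} gives $\rho^\eps \ge \lambda_K > 0$, so $\mathcal{G}^\eps(\rho^\eps)^2 \ge \int_K \rho^\eps\,|(W'(\rho^\eps)-\eps^2\rho^\eps_{xx})_x|^2\,dx$, and Proposition~\ref{prop:convQ'} passes the linearized duality to the limit, yielding $\liminf \mathcal{G}^\eps(\rho^\eps)^2 \ge \|W'(\rho_0)_x\|^2_{L^2_{\rho_0}(\Omega)}$ after exhausting $\Omega$ by compacta. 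The closing observation is that this is already $|\nabla E^{**}|(\rho_0)$: since $W^{**\prime}$ and $Q^{**\prime}$ are constant on each $\Sigma_i$, and the sets $\{\rho_0 \in \Sigma_i\}$ are pairwise separated (a consequence of Corollary~\ref{cor:lsc}), one checks directly that $(W^{**\prime}(\rho_0))_x = W'(\rho_0)_x\,\mathbbm{1}_\Omega$ in $\mathcal{D}'(\T)$ and that $Q^{**\prime}(\rho_0) \in W^{1,1}(\T)$. Nothing about the behavior of $G^\eps(\rho^\eps)$ on the bad set is ever needed.
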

The next two section is devoted to some preliminary compactness results, which are used in the proof of Theorem~\ref{thm:lscgrad} that can be found in Section~\ref{sec:thmlscgrad}. The proof of Theorem~\ref{thm:conv} can be found in Section~\ref{sec:proofthmconv}.

\section{Preliminary to the proof of Theorem~\ref{thm:lscgrad}}\label{sec:osc}
\subsection{Uniform $L^\infty$ estimate}
The first step is to prove a uniform $L^\infty$ estimate.

\begin{proposition}\label{prop:infty}
Let $\{\rho^\eps\}_{\eps>0}$ be a sequence of functions in $\mathcal{P}(\T)$ such that $\sup_\eps E^\eps[\rho^\eps]+\mathcal{G}^\eps(\rho^\eps)\le C$, then 
$$
\sup_\eps ||\rho^\eps||_\infty \le M<\infty.
$$
Moreover, up to a subsequence, 
$$
\rho^\eps\rightharpoonup\rho^0 \;\;\;\mbox{weak-$*$ } L^\infty.
$$
\end{proposition}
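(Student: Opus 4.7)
The plan is to extract a uniform $L^\infty$ bound on the auxiliary function $G^\eps(\rho^\eps)$ from the hypothesis $\mathcal{G}^\eps(\rho^\eps)\le C$, and then to exploit the coercivity of $Q'$ provided by (H2) to transfer this bound into a uniform $L^\infty$ bound on $\rho^\eps$ itself; weak-$*$ compactness will then close the argument.

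For the first bound, the hypothesis $\mathcal{G}^\eps(\rho^\eps)\le C$ supplies some $g^\eps\in L^2(\T)$ with $\|g^\eps\|_2\le C$ and $G^\eps(\rho^\eps)_x=\sqrt{\rho^\eps}\,g^\eps$. Cauchy--Schwarz together with $\int\rho^\eps=1$ gives $\|G^\eps(\rho^\eps)_x\|_{L^1(\T)}\le C$, and pairing the definition \eqref{eq:Geps} against the constant function $1$ --- noting that $-\eps^2(\rho^\eps\rho^\eps_x)_x$ integrates to zero by periodicity, that $\int Q'(\rho^\eps)$ is controlled by (H1) and the $E^\eps$ bound, and that $\frac{3\eps^2}{2}\int (\rho^\eps_x)^2\le 3 E^\eps[\rho^\eps]$ --- controls the distributional mean of $G^\eps(\rho^\eps)$. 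Since $W^{1,1}(\T)\hookrightarrow L^\infty(\T)$ in one space dimension, these two facts combine to give a uniform bound $\|G^\eps(\rho^\eps)\|_{L^\infty}\le C_0$.

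Next I will test the pointwise inequality $G^\eps(\rho^\eps)\le C_0$ against the non-negative truncation $(\rho^\eps-k)_+$, in the spirit of Stampacchia. Expanding using \eqref{eq:Geps} and integrating by parts the divergence term --- legitimate since $\rho^\eps\in H^1(\T)$ forces $(\rho^\eps-k)_+\in H^1(\T)$ with derivative $\rho^\eps_x\chi_{\{\rho^\eps>k\}}$ --- yields
$$
\int_\T Q'(\rho^\eps)(\rho^\eps-k)_+\,dx+\frac{3\eps^2}{2}\int_\T(\rho^\eps_x)^2(\rho^\eps-k)_+\,dx+\eps^2\int_{\{\rho^\eps>k\}}\rho^\eps(\rho^\eps_x)^2\,dx\le C_0\int_\T(\rho^\eps-k)_+\,dx.
$$
Hypothesis (H2) lets me pick $k=M$ so that $Q'(y)\ge 2C_0$ whenever $y\ge M$; discarding the non-negative regularising terms on the left then forces $\int_\T(\rho^\eps-M)_+\,dx=0$, i.e.\ $\rho^\eps\le M$ a.e., uniformly in $\eps$. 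Banach--Alaoglu on the dual of the separable space $L^1(\T)$ finally delivers the weak-$*$ convergent subsequence $\rho^\eps\rightharpoonup\rho^0$.

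The step I expect to require the most care is the passage from ``$G^\eps(\rho^\eps)$ as a distribution'' to ``$G^\eps(\rho^\eps)$ as a pointwise $L^\infty$ function'': only once that identification is made does the scalar comparison $G^\eps(\rho^\eps)\le C_0$ truly make sense in the truncation test. This is handled by noting that $G^\eps(\rho^\eps)_x\in L^2(\T)$ forces $G^\eps(\rho^\eps)$ to coincide modulo constants with an absolutely continuous primitive, and the additive constant is pinned down by the distributional pairing with $1$ computed in the first step. The integration by parts in the truncation test is otherwise routine, because $\rho^\eps\in H^1(\T)\hookrightarrow L^\infty(\T)$ for each fixed $\eps$, so $\rho^\eps\rho^\eps_x\in L^2$ pairs cleanly with the $H^1$ truncation $(\rho^\eps-k)_+$.
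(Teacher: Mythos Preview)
Your argument is correct, and the first half --- bounding $\|G^\eps(\rho^\eps)_x\|_{L^1}$ via Cauchy--Schwarz, bounding the mean of $G^\eps(\rho^\eps)$ via (H1) and the energy, and invoking $W^{1,1}(\T)\hookrightarrow L^\infty(\T)$ --- is exactly what the paper does.

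The second half is where you diverge. The paper evaluates $G^\eps(\rho^\eps)$ pointwise at a maximiser $x_0$ of $\rho^\eps$: there $\rho^\eps_x(x_0)=0$ and $\rho^\eps_{xx}(x_0)\le 0$, so $G^\eps(\rho^\eps)(x_0)\ge Q'(\rho^\eps(x_0))$, and (H2) closes the bound. This requires knowing that $\rho^\eps$ is $C^2$ at $x_0$; the paper asserts $\rho^\eps\in C^{2,\frac12}_{\mathrm{loc}}(\{\rho^\eps>0\})$ as a consequence of $\mathcal{G}^\eps(\rho^\eps)<\infty$, which is a nontrivial regularity statement. Your Stampacchia truncation against $(\rho^\eps-k)_+$ sidesteps this entirely: it works at the level of $H^1$ functions, the integration by parts of $-\eps^2(\rho^\eps\rho^\eps_x)_x$ against $(\rho^\eps-k)_+$ is justified because $\rho^\eps\rho^\eps_x\in W^{1,1}(\T)$ (once you know $G^\eps(\rho^\eps)\in L^1$) and $(\rho^\eps-k)_+\in H^1(\T)$, and the two discarded terms really are non-negative. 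So your route is genuinely more elementary in that it does not lean on any interior regularity of $\rho^\eps$ beyond $H^1$; the paper's route is shorter to write but tacitly imports a regularity claim. Both yield the same conclusion, and your care about identifying the distribution $G^\eps(\rho^\eps)$ with a continuous function before using pointwise bounds is well placed.
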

\begin{proof}
Consider $G^\eps(\rho^\eps)$ as in \eqref{eq:Geps}:
$$
G^\eps(\rho^\eps)=-\eps^2\rho^\eps\rho^\eps_{xx}+\frac{\eps^2}{2}(\rho_x^\eps)^2+Q'(\rho^\eps),
$$
with $Q'$ defined by \eqref{eq:Q'}.
By Remark~\ref{rem:cumbersome}, we know that
$$
||G^\eps(\rho^\eps)_x||_1\le \mathcal{G}^\eps(\rho^\eps)\le C.
$$
Moreover,
$$
\int_\T G^\eps(\rho^\eps)\;dx=\int_\T \frac{3}{2}\eps^2(\rho_x^\eps)^2+Q'(\rho^\eps)\;dx\le3E^\eps[\rho^\eps]+ D\int_\T (W(\rho^\eps)+1)\;dx,
$$
by (H1). Therefore, $G^\eps(\rho^\eps)$ is uniformly in $W^{1,1}(\T)$, which implies 
$$
\sup_\eps||G^\eps(\rho^\eps)||_\infty<\infty.
$$ 

Now, let's prove that $\rho^\eps$ is uniformly in $L^\infty$: take $x_0$, such that $\rho^\eps(x_0)=||\rho^\eps||_\infty$, then $\rho_x(x_0)=0$ and $\rho_{xx}(x_0)\le0$. We should note that, because $\mathcal{G}^\eps(\rho^\eps)\le C$, then $\rho^\eps\in C^{2,\frac{1}{2}}_{loc}(\{\rho^\eps>0\})$ and $\rho_{xx}(x_0)$ has a well defined value.

Now, we have the bound
$$
||G^\eps(\rho^\eps)||_\infty \ge G^\eps(\rho^\eps)(x_0)\ge Q'(\rho^\eps(x_0)),
$$ 
which, by assumption (H2), gives a bound for 
$$
\sup_\eps ||\rho^\eps||_\infty.
$$
\end{proof}

\begin{corollary}
Under the assumptions of Proposition~\ref{prop:infty}, $G^\eps(\rho^\eps)$ is bounded in $H^1(\T)$ uniformly in $\epsilon$. More precisely, we have the bound
$$
||G^\eps(\rho^\eps)_x||_{L^2}\le ||\rho^\eps||_{\infty}\mathcal{G}^\eps(\rho^\eps)\le C.
$$
Therefore, $G^\eps(\rho^\eps)\in C^{\frac{1}{2}}(\T)$ uniformly in $\eps$.
\end{corollary}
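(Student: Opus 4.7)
The plan is to unpack the definition of $\mathcal{G}^\eps$ and combine it with the bounds already obtained in Proposition~\ref{prop:infty}. By assumption $\mathcal{G}^\eps(\rho^\eps)$ is finite and uniformly bounded, so by definition of $\mathcal{T}^\eps(\rho^\eps)$ there exists, for every $\delta>0$, a function $g^\eps_\delta\in L^2(\T)$ with
\[
G^\eps(\rho^\eps)_x=\sqrt{\rho^\eps}\,g^\eps_\delta
\quad\text{and}\quad \|g^\eps_\delta\|_2\le \mathcal{G}^\eps(\rho^\eps)+\delta.
\]
(One can also argue by a standard weak-compactness argument that the infimum is in fact attained, but it is not needed here.)

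Next, I would estimate pointwise $|G^\eps(\rho^\eps)_x|^2=\rho^\eps(g^\eps_\delta)^2\le \|\rho^\eps\|_\infty (g^\eps_\delta)^2$, integrate over $\T$, take square roots, and let $\delta\to 0$ to obtain
\[
\|G^\eps(\rho^\eps)_x\|_{L^2}\le \|\rho^\eps\|_\infty^{1/2}\,\mathcal{G}^\eps(\rho^\eps).
\]
Since $\rho^\eps\in\mathcal{P}(\T)$ on the unit torus, $\int_\T\rho^\eps\,dx=1$ forces $\|\rho^\eps\|_\infty\ge 1$, so $\|\rho^\eps\|_\infty^{1/2}\le\|\rho^\eps\|_\infty$ and the first displayed inequality of the corollary follows. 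The second inequality is then immediate from Proposition~\ref{prop:infty}, which provides a uniform bound on $\|\rho^\eps\|_\infty$, together with the hypothesis $\mathcal{G}^\eps(\rho^\eps)\le C$.

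To conclude uniform $H^1$ control of $G^\eps(\rho^\eps)$, I would combine this derivative bound with the uniform $L^\infty$ (hence $L^2$) bound on $G^\eps(\rho^\eps)$ itself that is already established inside the proof of Proposition~\ref{prop:infty}. Finally, the one-dimensional Sobolev embedding $H^1(\T)\hookrightarrow C^{0,1/2}(\T)$ delivers the uniform Hölder estimate. I do not anticipate any real obstacle here: the argument is essentially a direct unpacking of the definition of $\mathcal{G}^\eps$ together with Cauchy–Schwarz and the previously established $L^\infty$ bound on $\rho^\eps$.
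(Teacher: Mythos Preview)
Your argument is correct and is precisely the computation the paper has in mind (the corollary is stated without proof, as an immediate consequence of the definition of $\mathcal{G}^\eps$ together with the $L^\infty$ bound from Proposition~\ref{prop:infty}). Your observation that the natural estimate actually yields the sharper factor $\|\rho^\eps\|_\infty^{1/2}$, and that one recovers the stated bound via $\|\rho^\eps\|_\infty\ge 1$ on the unit torus, is a nice clarification of the inequality as written.
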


\subsection{Control of the oscillations in the good set}
The key in the proof of Theorem~\ref{thm:lscgrad} is to control the size of the oscillations of $\rho^\eps$ in the good sets. This will be given by the following Theorem:

\begin{theorem}\label{thm:osc}
Let $\{\rho^\eps\}_{\eps>0}$ be a sequence of functions in $\mathcal{P}(\T)$ such that $\sup_\eps E^\eps[\rho^\eps]+\mathcal{G}^\eps(\rho^\eps)\le C$, then, for any $L\ge0$ there exists $\delta(\eta,C)>0$, independent of $\eps$, such that for any $\eps<\eps_0(\eta,C,L)$ and any pair of sequences $x_\eps$, $y_\eps$ satisfying:
\begin{itemize}
\item $0<y_\eps-x_\eps<\delta$,
\item  $|\rho_x^\eps(x_\eps)|< L$ and $|\rho_x^\eps(y_\eps)|< L$,
\end{itemize}
we have either
$$
d(\rho^\eps(z),\Sigma)<\eta\;\;\;\; \forall z\in [x_\eps,y_\eps]\;\; $$
or
$$
|\rho^\eps(x_\eps)-\rho^\eps(y_\eps)|<\eta.
$$
\end{theorem}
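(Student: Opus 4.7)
The plan is to argue by contradiction. Along a subsequence $\eps\to 0$ extract triples $x_\eps<z_\eps<y_\eps$ with $y_\eps-x_\eps\to 0$, $|\rho_x^\eps(x_\eps)|,|\rho_x^\eps(y_\eps)|<L$, $|\rho^\eps(x_\eps)-\rho^\eps(y_\eps)|\ge\eta$, and $d(\rho^\eps(z_\eps),\Sigma)\ge\eta$. After further extraction, $\rho^\eps(z_\eps)\to A$ with $d(A,\Sigma)\ge\eta$; in particular $A>0$ since $0\in\Sigma$, and by (H4) $W''\ge c_0(\eta)>0$ on a uniform neighborhood of $A$. The endpoint values converge to some $B,B'$ with $|B-B'|\ge\eta$.

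Two ingredients drive the argument. First, by the corollary just preceding the theorem, $G^\eps(\rho^\eps)=Q'(\rho^\eps)+\tfrac{\eps^2}{2}(\rho_x^\eps)^2-\eps^2\rho^\eps\rho_{xx}^\eps$ is bounded uniformly in $\eps$ in $C^{1/2}(\T)$, so its oscillation on $[x_\eps,y_\eps]$ is at most $C_1(y_\eps-x_\eps)^{1/2}$. Second, with $\lambda:=W'(A)$ and $T_A(y):=W(A)+\lambda(y-A)$, the fact that $A\in\Sigma^c$ gives $\lambda=W^{**\prime}(A)$; convexity of $W^{**}$ together with $W^{**}\le W$ then yields $T_A\le W$ on $[0,\infty)$ with equality at $A$, while (H4) and Taylor expansion give $W(y)-T_A(y)\ge\tfrac{c_0}{2}(y-A)^2$ near $A$ and, more globally, $W-T_A\ge\gamma(\eta)>0$ on $\{y\ge 0:\,d(y,\Sigma)\ge\eta/4,\ |y-A|\ge\eta/4\}$. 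This is the precise form of the ``tangent lines do not cross the graph of $W$'' principle emphasized in the introduction.

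These two ingredients combine cleanly at interior extrema of $\rho^\eps$. At an interior local max $x^*$ of $\rho^\eps$ with value $M^*$ one has $\rho_x^\eps(x^*)=0$ and $\rho_{xx}^\eps(x^*)\le 0$, hence $G^\eps(\rho^\eps)(x^*)\ge Q'(M^*)$; at an interior local min $x^{**}$ with value $m^*$, the reverse inequality holds. The $C^{1/2}$-oscillation bound then forces $Q'(M^*)-Q'(m^*)\le 2C_1(y_\eps-x_\eps)^{1/2}$, and since $Q''(y)=yW''(y)$ is bounded below by a positive constant on every compact subset of $\Sigma^c\cap(0,\infty)$, all interior extrema with values in such a compact set must lie in an $O(\delta^{1/2})$-window of each other. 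To propagate this to the endpoints $x_\eps,y_\eps$ (which are not themselves critical points), I would integrate the phase-portrait identity
\[
\tfrac{d}{dx}\Big[\tfrac{\eps^2}{2}(\rho_x^\eps)^2-W(\rho^\eps)+\lambda\rho^\eps\Big]=\rho_x^\eps(\lambda-P^\eps),
\]
where $P^\eps:=W'(\rho^\eps)-\eps^2\rho_{xx}^\eps$ satisfies $(G^\eps(\rho^\eps))_x=\rho^\eps P^\eps_x$ and $\int\rho^\eps|P^\eps_x|^2\le(\mathcal{G}^\eps(\rho^\eps))^2\le C^2$, from $x_\eps$ (respectively $y_\eps$) to a nearby interior extremum. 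Together with the tangent-line lower bound on $W-T_A$ and the smallness $\eps^2 L^2\to 0$ of the endpoint boundary terms, this forces $Q'(\rho^\eps(x_\eps))$, $Q'(\rho^\eps(y_\eps))$ and $Q'(A)$ to be $O(\delta^{1/2}+\eps^2L^2)$-close, contradicting $|B-B'|\ge\eta$ once $\delta(\eta,C)$ is chosen small enough and $\eps<\eps_0(\eta,C,L)$.

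The main obstacle is the case where $\rho^\eps$ dips near zero, or crosses several components of $\Sigma$, on its way between $x_\eps$ and $y_\eps$: there $Q''(y)=yW''(y)$ fails to be uniformly positive, the weighted bound $\int\rho^\eps|P^\eps_x|^2\le(\mathcal{G}^\eps)^2$ degenerates at $\{\rho^\eps=0\}$, and the tangent line $T_A$ may touch $W$ at points of $\Sigma$ different from $A$. The structural hypothesis (H3) together with the threshold $m_0$ in \eqref{eq:m0} is designed exactly to separate the near-zero regime, and (H2) is used to rule out excursions to arbitrarily large values. The careful analysis near $\{\rho^\eps=0\}$—amounting to showing that any transition of $\rho^\eps$ across $\Sigma$ on a fixed-length interval must leave a macroscopic portion of the trajectory inside an $\eta$-neighborhood of $\Sigma$ as $\eps\to 0$, which brings us back to the first alternative in the statement—is where the real work of the theorem lies, consistent with the warning in the introduction.
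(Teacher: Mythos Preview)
Your plan identifies the same two tools as the paper---the uniform $C^{1/2}$ bound on $G^\eps$ and the tangent-line/phase-portrait comparison---and your observation that $Q'(M^*)-Q'(m^*)$ is $O(\delta^{1/2})$ at interior extrema is exactly the mechanism behind Lemma~\ref{lem:osch}, the bounded-away-from-zero case, which the paper treats as essentially known from \cite{bellettini2012convergence}. The genuine gap is that you stop at ``the careful analysis near $\{\rho^\eps=0\}$\ldots is where the real work lies'' without carrying it out; that near-zero step is the entire content of the paper's proof of Theorem~\ref{thm:osc} itself, as opposed to Lemma~\ref{lem:osch}.

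Concretely, the paper reduces (via Lemma~\ref{lem:osch}) to the scenario $\rho^{\eps_i}(x_i)\to 0$, $\rho^{\eps_i}(y_i)>b_1+\eta_0$. Since $Q'(0)=Q'(b_1)=0$ (the tangent to $W$ at $b_1$ passes through the origin, by definition of $\Sigma_1$), the $C^{1/2}$-oscillation control on $G^\eps$ forces $Q'(\rho^{\eps_i}(y_i))\to 0$. This contradicts $\rho^{\eps_i}(y_i)>b_1+\eta_0$ \emph{only if} one first traps $\rho^{\eps_i}(y_i)$ in $(b_1,m_0)$, where $Q'$ is strictly increasing. Establishing $\rho^{\eps_i}(y_i)<m_0$ (Claim~I) is the crux: one takes the last crossing $z_0$ of the level $m_0$ and, by the phase-portrait integration from $z_0$ to $y_i$ together with strict convexity of $W$ at $m_0$, proves $\rho^{\eps_i}_{xx}(z_0)\le 0$ (Claim~II); then $G^\eps(z_0)\ge Q'(m_0)>0$ while $G^\eps(x_i)\to Q'(0)=0$, contradicting the H\"older control. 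Your outline does not supply this trapping argument, and without it the $Q'$-comparison you invoke cannot close: $Q'$ is not globally monotone ($Q''=yW''$ changes sign across $\Sigma$), so limits $B,B'$ with $|B-B'|\ge\eta$ can perfectly well satisfy $Q'(B)\approx Q'(B')$ whenever they lie on opposite sides of a component of $\Sigma$ or one of them exceeds $m_0$.
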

Theorem~\ref{thm:osc} is similar to Lemma 5.5 in the paper by Belletini et al. \cite{bellettini2012convergence}. The main difference in the proof is that in \cite{bellettini2012convergence} they have control of the $H^1$ norm of
\begin{equation}\label{eq:eeps}
e^\eps(\rho^\eps)=W'(\rho^\eps)-\eps^2\rho^\eps_{xx},
\end{equation}
while we only have control on
\begin{equation}\label{eq:eepsint}
\int_\T |e^\eps(\rho^\eps)_x|^2\rho^\eps\;dx,
\end{equation}
which is degenerate near $\{\rho^\eps=0\}$.

Theorem~\ref{thm:osc} can be interpreted as a uniform lower semi-continuity for $d(\rho^\eps,\Sigma)$:
\begin{corollary}\label{cor:lsc}
Let $\{\rho^\eps\}_{\eps>0}$ be a sequence of functions in $\mathcal{P}(\T)$ such that $\sup_\eps E^\eps[\rho^\eps]+\mathcal{G}^\eps(\rho^\eps)\le C$ and that $\rho^\eps\to\rho_0$ in $\W$, then for x, any Lebesgue point of $\rho_0$, there exists $\eps_x$ and $\delta'=\delta'(d(\rho_0(x),\Sigma))$ such that for every $\eps<\eps_x$ and $y\in (x-\delta',x+\delta')$, we have
$$
d(\rho_0(y),\Sigma)> \frac{d(\rho_0(x),\Sigma)}{2}.
$$
Moreover, $\Omega:=\{\rho_0\notin \Sigma\}$ has an open representative.
\end{corollary}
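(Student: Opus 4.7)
Set $\eta := d(\rho_0(x),\Sigma)$; if $\eta = 0$ there is nothing to show, so assume $\eta > 0$. The first goal is to produce $\eps_x,\delta' > 0$ so that $d(\rho^\eps(y),\Sigma) > \eta/2$ for all $\eps < \eps_x$ and every $y \in (x - \delta',x+\delta')$; the statement about $\rho_0$ and the open representative follow from this by passing to a subsequence and using that a.e.\ point of $\rho_0$ is a Lebesgue point.

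Fix $L > 0$ and apply Theorem~\ref{thm:osc} with tolerance $\eta/4$ to obtain $\delta = \delta(\eta/4,C)$ and a threshold $\eps_0(\eta/4,C,L)$. Set $\delta' = \delta/2$. Because $\sup_\eps \|\rho^\eps\|_\infty < \infty$ (Proposition~\ref{prop:infty}), one has $\rho^\eps \rightharpoonup \rho_0$ weak-$*$ in $L^\infty$, so for any $r \in (\delta',\delta)$ the average $\tfrac{1}{r}\int_{x-r}^{x}\rho^\eps$ is within $\eta/16$ of $\rho_0(x)$ once $\eps$ is small enough (using that $x$ is a Lebesgue point of $\rho_0$). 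Combined with the uniform Hölder control on $G^\eps(\rho^\eps)$ from the corollary after Proposition~\ref{prop:infty} and the $C^1$ regularity of $\rho^\eps$, this produces a point $x_\eps \in (x-\delta,\,x-\delta')$ with $|\rho^\eps(x_\eps) - \rho_0(x)| < \eta/8$ and $|\rho^\eps_x(x_\eps)| < L$; a symmetric argument on the right produces $y_\eps \in (x+\delta',\,x+\delta)$ with the analogous properties.

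Apply Theorem~\ref{thm:osc} to the pair $(x_\eps,y_\eps)$: the first alternative would force $d(\rho^\eps(x_\eps),\Sigma) < \eta/4$, contradicting $d(\rho^\eps(x_\eps),\Sigma) > 7\eta/8$; hence the second alternative gives $|\rho^\eps(x_\eps) - \rho^\eps(y_\eps)| < \eta/4$. To upgrade this to every intermediate point, apply Theorem~\ref{thm:osc} a second time to $[z^m_\eps, z^M_\eps]$, where $z^m_\eps, z^M_\eps$ are the minimum and maximum points of $\rho^\eps$ on $[x_\eps,y_\eps]$; at interior critical points $\rho^\eps_x$ vanishes (so any $L$ works), while the boundary cases $z^m_\eps \in \{x_\eps,y_\eps\}$ or $z^M_\eps \in \{x_\eps,y_\eps\}$ are covered by the derivative bound at $x_\eps,y_\eps$. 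Once more the first alternative is ruled out because $\rho^\eps(z^M_\eps) \geq \rho^\eps(x_\eps)$ sits at distance $>7\eta/8$ from $\Sigma$; hence the oscillation of $\rho^\eps$ on $[x_\eps,y_\eps]$ is at most $\eta/4$. Together with $|\rho^\eps(x_\eps) - \rho_0(x)| < \eta/8$, this yields $|\rho^\eps(z) - \rho_0(x)| < 3\eta/8$, and therefore $d(\rho^\eps(z),\Sigma) > 5\eta/8 > \eta/2$ for every $z \in [x_\eps,y_\eps] \supset (x-\delta',x+\delta')$.

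The main obstacle is constructing $x_\eps,y_\eps$ satisfying simultaneously the value approximation (from the weak-$*$ convergence and the Lebesgue point property) and the derivative bound; the latter is where we must use the full strength of the assumed bound on $\mathcal{G}^\eps$, via the uniform $C^{1/2}$ control of $G^\eps(\rho^\eps)$, to prevent $\rho^\eps$ from oscillating with large amplitude near the target value $\rho_0(x)$. Once this step is settled, the two applications of Theorem~\ref{thm:osc} close the quantitative claim, and the open representative of $\{\rho_0 \notin \Sigma\}$ follows by observing that the set of Lebesgue points of $\rho_0$ lying in $\Sigma^c$ is relatively open, has full measure in $\Omega$, and can be taken as the representative.
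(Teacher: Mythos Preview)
Your overall architecture---use Theorem~\ref{thm:osc}, the Lebesgue point property, weak-$*$ convergence, and a max/min argument to upgrade endpoint control to oscillation control---is the same as the paper's. But the step you yourself flag as ``the main obstacle'' is a genuine gap, and the fix you suggest does not work.

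You ask for points $x_\eps\in(x-\delta,x-\delta')$ and $y_\eps\in(x+\delta',x+\delta)$ satisfying \emph{simultaneously} $|\rho^\eps(x_\eps)-\rho_0(x)|<\eta/8$ and $|\rho^\eps_x(x_\eps)|<L$, and you propose to extract the derivative bound from the uniform $C^{1/2}$ control of $G^\eps(\rho^\eps)$. This does not follow: $G^\eps(\rho^\eps)=-\eps^2\rho^\eps\rho^\eps_{xx}+\tfrac{\eps^2}{2}(\rho^\eps_x)^2+Q'(\rho^\eps)$ being bounded at a point where $\rho^\eps\approx\rho_0(x)$ only tells you that the combination $-\eps^2\rho^\eps\rho^\eps_{xx}+\tfrac{\eps^2}{2}(\rho^\eps_x)^2$ is bounded; the two terms can be individually enormous and cancel. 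Moreover you fix $L$ at the outset without saying what it is, yet Theorem~\ref{thm:osc} requires $\eps<\eps_0(\eta,C,L)$, so $L$ must be chosen before you know where $x_\eps$ lands. The paper sidesteps the whole difficulty by \emph{decoupling} the two requirements: since $\|\rho^\eps\|_\infty\le M$, the mean value theorem on any subinterval of length $\delta/4$ produces a point with $|\rho^\eps_x|\le 4M/\delta$; one therefore sets $L:=4M/\delta$ and obtains $x_2\in(x-\delta/2,x-\delta/4)$, $x_1\in(x+\delta/4,x+\delta/2)$ with the derivative bound and no value control whatsoever. The value control is carried by a \emph{different} point $x_\eps$ near $x$, found from the Lebesgue property and weak-$*$ convergence of averages; since $x_\eps$ lies in $(x_2,x_1)$, it serves only to rule out the first alternative of Theorem~\ref{thm:osc} when the latter is applied to the max/min of $\rho^\eps$ on $[x_2,x_1]$ (via the intermediate value theorem, some point between the extremizers takes the value $\rho^\eps(x_\eps)$).

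A secondary slip: in your second application of Theorem~\ref{thm:osc} you rule out the first alternative by asserting that $\rho^\eps(z^M_\eps)\ge\rho^\eps(x_\eps)$ forces $d(\rho^\eps(z^M_\eps),\Sigma)>7\eta/8$. That implication is false in general---$\Sigma$ may lie above $\rho^\eps(x_\eps)$, so increasing the value can decrease the distance. The correct argument is again the intermediate value theorem: $\rho^\eps(x_\eps)$ lies between the minimum and maximum values, hence is attained at some point of the subinterval between the extremizers, and that point witnesses the failure of the first alternative.
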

\begin{proof}[Proof of Corollary~\ref{cor:lsc}]
We start with the following claim:

\vspace{0.3cm}

\textbf{Claim:}\textit{ For any $\beta>0$, we define $\delta(\beta)=\frac{\delta(\beta,C)}{4}$ and $\eps(\beta)=\eps(C,\beta,\frac{4M}{\delta})$ (Given by Theorem~\ref{thm:osc}). If for some $\eps\in(0,\eps_\beta)$, we have that $d(\rho^\eps(x),\Sigma)>2\beta$, then $d(\rho^\eps(y),\Sigma)>\beta$ for all $y\in (x-\delta_\beta,x+\delta_\beta)$.}

\vspace{0.3cm}

\textbf{Proof of the Claim:}
We take $\delta=\delta(\eta,C)$ given by Theorem~\ref{thm:osc}. Because we know that $\sup_\eps|\rho^\eps|\le M$, we have $osc_{(x+\frac{\delta}{4},x+\frac{\delta}{2})}\rho^\eps\le M$. Therefore, there exists $x_1\in (x+\frac{\delta}{4},x+\frac{\delta}{2})$ such that 
$$
|\rho^\eps_x(x_1)|\le\frac{4M}{\delta}.
$$
Similarly, there exists $x_2\in (x-\frac{\delta}{2},x\frac{\delta}{4})$ such that $|\rho^\eps_x(x_2)|\le\frac{4M}{\delta}$. 

If $\eps<\eps(C,\eta,\frac{4M}{\delta})$ given by Theorem~\ref{thm:osc}, then we can estimate the difference between the maximum and the minimum in $[x_2,x_1]$ (they are either a critical point or a boundary point). Therefore, we know that 
$$
osc_{(x_2,x_1)}\rho^\eps<\eta,
$$ 
by taking $\eta=\beta$ the \textbf{Claim} follows. 

\vspace{0.3cm}

Because $x$ is a Lebesgue point, for all $r$ small enough, we know that 
\begin{equation}\label{eq:rx}
\left|\frac{1}{2r}\int_{x-r}^{x+r}\rho_0(y)dy-\rho_0(x)\right|<\frac{d(\rho_0(x),\Sigma)}{6}
\end{equation}
We will fix $r_x<\frac{1}{2}\delta\left({\frac{d(\rho_0(x),\Sigma)}{3}}\right)$ such that \eqref{eq:rx} holds.

By Proposition~\ref{prop:infty}, we know that $\rho^\eps\to\rho_0$ weak-$*$ $L^\infty$; therefore, there exists $\eps_x$ such that for all $\eps<\eps_x$
$$
\left|\frac{1}{2r_x}\int_{x-r_x}^{x+r_x}\rho^\eps(y)dy-\frac{1}{2r_x}\int_{x-r_x}^{x+r_x}\rho_0(y)dy\right|<\frac{d(\rho_0(x),\Sigma)}{6}.
$$
So, if $\eps<\eps_x$, there exists $x_\eps\in(x-r_x,x+r_x)$, such that 
$$
|\rho^\eps(x_\eps)-\rho_0(x)|<\frac{d(\rho_0(x),\Sigma)}{3},
$$ 
hence
$$
d(\rho^\eps(x_\eps),\Sigma)>\frac{2}{3}d(\rho_0(x),\Sigma).
$$ 
By the \textbf{Claim}, if $\eps$ is small enough, it follows that $d(\rho^\eps(y),\Sigma)>\frac{d(\rho_0(x),\Sigma)}{3}$ for all $y\in(x_\eps-\delta\left(\frac{d(\rho_0(x),\Sigma)}{3}\right),x_\eps+\delta\left(\frac{d(\rho_0(x),\Sigma)}{3}\right))$. The result follows, because 
$$
(x-\frac{1}{2}\delta\left({\frac{d(\rho_0(x),\Sigma)}{3}}\right),x+\frac{1}{2}\delta\left({\frac{d(\rho_0(x),\Sigma)}{3}}\right))
\subset
(x_\eps-\delta\left({\frac{d(\rho_0(x),\Sigma)}{3}}\right),x_\eps+\delta\left({\frac{d(\rho_0(x),\Sigma)}{3}}\right)).
$$
\end{proof}

To prove Theorem~\ref{thm:osc} we start by looking at the behavior of $\rho^\eps$ on the set $\{\rho^\eps>h\}$ with $h>0$. This case follows exactly as Lemma 5.5 in \cite{bellettini2012convergence}; our main contribution here is to give a different proof in a simple case that makes the set $\Sigma=cl\{W>W^{**}\}$ appear more naturally.

\begin{lemma}\label{lem:osch}
Let $\{\rho^\eps\}_{\eps>0}$ be a sequence of functions in $\mathcal{P}(\T)$ such that $\sup_\eps E^\eps[\rho^\eps]+\mathcal{G}^\eps(\rho^\eps)\le C$, then for any $h>0$ and $L\ge0$ there exists $\delta(\eta,C,h)>0$, independent of $\eps$, such that for any $\eps<\eps_0(\eta,C,h,L)$ and any pair of sequences $x_\eps$, $y_\eps$ satisfying:
\begin{itemize}
\item $0<y_\eps-x_\eps<\delta$,
\item  $|\rho_x^\eps(x_\eps)|< L$ and $|\rho_x^\eps(y_\eps)|< L$,
\item $\rho^\eps(z)>h$ $\forall z\in [x_\eps,y_\eps]$ 
\end{itemize}
then we have either
$$
d(\rho^\eps(z),\Sigma)<\eta\;\;\;\; \forall z\in [x_\eps,y_\eps] 
$$
or
$$
|\rho^\eps(x_\eps)-\rho^\eps(y_\eps)|<\eta.
$$
\end{lemma}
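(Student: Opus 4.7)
The plan is to argue by contradiction, exploiting the tangent-line characterization of points in $\Sigma^c$ bounded away from $\Sigma$. If $d(\rho^\eps(z^*), \Sigma) \geq \eta$, the tangent line to $W$ at $\rho^\eps(z^*)$ with slope $\lambda := W'(\rho^\eps(z^*)) = W^{**\prime}(\rho^\eps(z^*))$ lies weakly below $W$ everywhere (the defining property of the convex envelope). By (H4) and the $\eta$-separation, $U_\lambda(y) := W(y) - \lambda y$ has a uniform quadratic lower bound near its minimum at $\rho^\eps(z^*)$, and ``twin'' minimizers are excluded because $\lambda$ is uniformly separated from the (finitely many) Maxwell slopes when $\rho^\eps(z^*)$ is $\eta$-away from $\partial \Sigma$.

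The first ingredient is uniform $H^1$-control of the chemical potential $e^\eps := W'(\rho^\eps) - \eps^2 \rho^\eps_{xx}$ on $[x_\eps, y_\eps]$. From the identity $G^\eps(\rho^\eps)_x = \rho^\eps e^\eps_x$ and the $L^2$-bound on $G^\eps(\rho^\eps)_x$ given in the Corollary following Proposition~\ref{prop:infty}, we obtain $\|e^\eps_x\|_{L^2([x_\eps,y_\eps])} \leq C/\sqrt{h}$. Combining the pointwise formula $e^\eps = (G^\eps + W(\rho^\eps) - \tfrac{\eps^2}{2}(\rho^\eps_x)^2)/\rho^\eps$ evaluated at the endpoints (where $|\rho^\eps_x| \leq L$) with $\|G^\eps\|_\infty \leq C$ and $\rho^\eps \geq h$ gives pointwise bounds on $e^\eps$ at $x_\eps, y_\eps$, and Sobolev embedding then yields $\mathrm{osc}_{[x_\eps,y_\eps]}(e^\eps) \leq C(M,h)\sqrt{\delta}$.

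The heart of the proof is an approximate energy conservation identity. Setting $H_\lambda(z) := \tfrac{\eps^2}{2}(\rho^\eps_x(z))^2 - U_\lambda(\rho^\eps(z))$, one computes $H'_\lambda = (\lambda - e^\eps)\rho^\eps_x$, and an integration by parts gives
\begin{equation*}
H_\lambda(z) - H_\lambda(x_\eps) = -\bigl[(e^\eps - \lambda)\rho^\eps\bigr]_{x_\eps}^{z} + G^\eps(\rho^\eps(z)) - G^\eps(\rho^\eps(x_\eps)).
\end{equation*}
Evaluated at $z = z^*$, together with $\tfrac{\eps^2}{2}(\rho^\eps_x(z^*))^2 \geq 0$ and $|\rho^\eps_x(x_\eps)| \leq L$, this produces
\begin{equation*}
U_\lambda(\rho^\eps(x_\eps)) - U_\lambda(\rho^\eps(z^*)) \leq C(M)\bigl(\|e^\eps - \lambda\|_{L^\infty([x_\eps,y_\eps])} + \mathrm{osc}_{[x_\eps,y_\eps]}(G^\eps)\bigr) + \tfrac{\eps^2 L^2}{2},
\end{equation*}
and symmetrically at $y_\eps$.

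The main obstacle is the joint choice of $\lambda$: controlling the right-hand side requires $\lambda$ close to $e^\eps$ throughout $[x_\eps, y_\eps]$, whereas the tangent-line property forces $\lambda = W'(\rho^\eps(z^*)) = e^\eps(z^*) + \eps^2 \rho^\eps_{xx}(z^*)$, and the term $\eps^2 \rho^\eps_{xx}(z^*)$ is not controlled pointwise. My plan is to select $z^*$ within the nonempty set $\{z \in [x_\eps, y_\eps] : d(\rho^\eps(z), \Sigma) \geq \eta\}$ via a pigeonhole / compactness argument that additionally makes $\eps|\rho^\eps_x(z^*)|$ small (exploiting continuity of $d(\rho^\eps(\cdot), \Sigma)$ on an $\{d \geq \eta/2\}$-neighborhood of $z^*$ and the energy bound $\int_{x_\eps}^{y_\eps} \eps^2 (\rho^\eps_x)^2 \, dx \leq 2C$), so that $\eps^2 \rho^\eps_{xx}(z^*) = W'(\rho^\eps(z^*)) - e^\eps(z^*) = o(1)$ as $\delta, \eps \to 0$. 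Taking $\lambda := W'(\rho^\eps(z^*))$ then makes both the right-hand side and $\|e^\eps - \lambda\|_\infty$ small; combined with the quadratic lower bound on $U_\lambda(y) - U_\lambda(\rho^\eps(z^*))$ from (H4) and the absence of twin minimizers, this forces $|\rho^\eps(x_\eps) - \rho^\eps(z^*)|, |\rho^\eps(y_\eps) - \rho^\eps(z^*)| < \eta/2$ for $\delta, \eps$ sufficiently small, whence $|\rho^\eps(x_\eps) - \rho^\eps(y_\eps)| < \eta$.
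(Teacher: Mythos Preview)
Your approach has a genuine gap at the step where you claim $\eps^2\rho^\eps_{xx}(z^*) = o(1)$. Your pigeonhole argument yields a point $z^*$ with $d(\rho^\eps(z^*),\Sigma)\ge\eta/2$ and $\eps|\rho^\eps_x(z^*)|$ small, but smallness of the \emph{first} derivative has no bearing on the pointwise size of $\eps^2\rho^\eps_{xx}(z^*) = W'(\rho^\eps(z^*)) - e^\eps(z^*)$. From the available bounds ($\|e^\eps\|_{L^\infty(x_\eps,y_\eps)}\le C(h)$ and $\|\rho^\eps\|_\infty\le M$) you only get $|\eps^2\rho^\eps_{xx}(z^*)|\le C$, not $o(1)$. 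Without this, $\lambda:=W'(\rho^\eps(z^*))$ need not be close to $e^\eps$ on $[x_\eps,y_\eps]$, the term $[(e^\eps-\lambda)\rho^\eps]_{x_\eps}^{z^*}$ in your energy identity is merely bounded, and the inequality $U_\lambda(\rho^\eps(x_\eps))-U_\lambda(\rho^\eps(z^*))\le o(1)$ does not follow.

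The paper avoids this obstacle by never trying to make $\eps^2\rho^\eps_{xx}$ small in absolute value. Instead, it reduces (via nearby extrema) to the situation $\rho^\eps(x_\eps)\le\rho^\eps(y_\eps)$ with $\rho^\eps_{xx}(x_\eps)\ge0$ and $\rho^\eps_{xx}(y_\eps)\le0$. These \emph{sign} conditions give the one-sided inequalities $e^\eps(x_\eps)\le W'(\rho^\eps(x_\eps))$ and $e^\eps(y_\eps)\ge W'(\rho^\eps(y_\eps))$, which combined with the ordering $\rho^\eps(y_\eps)\ge\rho^\eps(x_\eps)$ are exactly what is needed to replace $e^\eps$ by $W'(\rho^\eps)$ in the tangent-line inequality \eqref{eq:lambda1}. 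Your framework can be salvaged in the same spirit: instead of a generic pigeonhole point, take $z^*$ to be a local extremum of $\rho^\eps$ inside the set $\{d(\rho^\eps,\Sigma)\ge\eta\}$ (for instance the argmax of $d(\rho^\eps(\cdot),\Sigma)$ when $\Sigma\cap(h,\infty)$ is a single interval), so that $\rho^\eps_x(z^*)=0$ and the sign of $\rho^\eps_{xx}(z^*)$ is determined; this restores the needed inequality without requiring $|\eps^2\rho^\eps_{xx}(z^*)|$ to be small.
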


\begin{proof}
We only give a sketch of the proof, which shows why the set $\Sigma$ appears naturally. For a complete proof see Lemma 5.5 in \cite{bellettini2012convergence}.

Since $\rho^\eps(z)>h$ for every $z\in(x_\eps,y_\eps)$ and $\eps$, then
$$
(e^\eps(\rho^\eps))_x=(W'(\rho^\eps)-\eps^2\rho^\eps_{xx})_x
$$
is bounded in $L^2(x_\eps,y_\eps)$ uniformly in $\eps$ (see \eqref{eq:eeps}, \eqref{eq:eepsint} and Remark~\ref{rem:cumbersome}). Moreover, we have
\begin{equation}\label{eq:eepsbound}
\int_{x_\eps}^{y_\eps} e^\eps(\rho^\eps)(z)dz=\int_{x_\eps}^{y_\eps}W'(\rho^\eps(z))-\eps^2\rho^\eps_{xx}(z)dz
\le C+2\eps^2 L.
\end{equation}
Sobolev's Embedding Theorem implies that $e^\eps(\rho^\eps)$ is also uniformly in bounded $C^{\frac{1}{2}}$.

Without loss of generality, we will assume that $\rho^\eps(x_\eps)\le\rho^\eps(y_\eps)$, and that $\rho_x^\eps(x_\eps)$, $\rho^\eps_x(y_\eps)\ge0$, if not we work with the closest minimum to $x_\eps$ and the closest maximum to $y_\eps$, inside the interval. We will also assume $\rho_{xx}^\eps(x_\eps)\ge0$ and $\rho_{xx}^\eps(y_\eps)\le0$;
if this condition is not satisfied, we can take
$$
\tilde{x_\eps}=\inf\{z:z\in(x_\eps,y_\eps)\cap\rho_{xx}^{\eps}(z)<0\cap\rho^\eps_x\ge0\}.
$$
Then, we obtain  
$$
|\rho^{\eps}(x_\eps)-\rho^{\eps}(\tilde{x_\eps})|\le L\delta.
$$
If $\rho^{\eps}(\tilde{x_\eps})_x>0$, then $\rho^{\eps}(\tilde{x_\eps})_{xx}>0$. If $\rho^{\eps}(\tilde{x_\eps})_x=0$ and $\rho^{\eps}(\tilde{x_\eps})_{xx}<0$, then $\rho^{\eps}(\tilde{x_\eps})$ is a maximum. If this happens, we consider $\tilde{z_\eps}$ the closest minimum to $\tilde{x_\eps}$, so that we get $\rho^\eps(\tilde{z_\eps})_{xx}\ge0$. We split the interval in three, $(x_\eps,\tilde{x_\eps})$, $(\tilde{x_\eps},\tilde{z_\eps})$ and $(\tilde{z_\eps},y_\eps)$, and we control each of the pieces separately.

If $\rho^{\eps}_{xx}(y_\eps)>0$, we can repeat the same arguments.

Multiplying $e^\eps(\rho^\eps)$ by $\rho^\eps_x(z)$ and integrating between $x_\eps$ and $y_\eps$ we get the following
\begin{equation*}
\begin{array}{rl}
\int_{x_\eps}^{y_\eps}e^\eps(\rho^\eps)\rho^\eps_x\;dz&=\frac{\eps^2}{2}(|\rho^\eps_{x}(y_\eps)|^2-|\rho^\eps_{x}(x_\eps)|^2)+W(\rho^\eps(y_\eps))-W(\rho^\eps(x_\eps))\\
&\le \frac{\eps^2}{2} L^2+W(\rho^\eps(y_\eps))-W(\rho^\eps(x_\eps)).
\end{array}
\end{equation*}
On the other hand, integrating by parts we also find
$$
\int_{x_\eps}^{y_\eps}e^\eps(\rho^\eps)\rho^\eps_x\;dz=-\int_{x_\eps}^{y_\eps}e^\eps_x(\rho^\eps)\rho^\eps\;dz+[e^\eps(\rho^\eps)\rho^{\eps}]_{x_\eps}^{y_\eps}.
$$
Because $e^\eps_x$ is uniformly in $L^2$ and $\rho^\eps$ uniformly in $L^\infty$, we have
$$
\left|\int_{x_\eps}^{y_\eps}e^\eps_x(\rho^\eps)\rho^\eps\;dz\right|\le C (y_\eps-x_\eps)^\frac{1}{2}\le C\delta^\frac{1}{2}.
$$
We decompose
\begin{equation}\label{eq:xepsyeps}
[e^\eps(\rho^\eps)\rho^{\eps}]_{x_\eps}^{y_\eps}=e^\eps(\rho^\eps)(x_\eps)[\rho^{\eps}(y_\eps)-\rho^{\eps}(x_\eps)]+[e^\eps(\rho^\eps)]_{x_\eps}^{y_\eps}\rho^{\eps}(y_\eps);
\end{equation}
using that $e^\eps$ is uniformly in $C^\frac{1}{2}$, we see that
$$
|[e^\eps(\rho^\eps)]_{x_\eps}^{y_\eps}\rho^{\eps}|\le C (y_\eps-x_\eps)^\frac{1}{2}\le C\delta^\frac{1}{2}.
$$
Combining the five equations above, we see that given any $\lambda>0$, we can choose $\eps$ and $\delta$ small enough, such that 
$$
W(\rho^\eps(x_\eps))+e^\eps(\rho^\eps)(x_\eps)(\rho^\eps(y_\eps)-\rho^\eps(x_\eps))+\lambda\ge W(\rho^\eps(y_\eps)).
$$
Using the assumption $\rho^\eps(y_\eps)\ge\rho^\eps(x_\eps)$, the definition of $e^\eps(\rho^\eps)$ and the fact that $\rho_{xx}^\eps(x_\eps)\ge0$, we obtain
\begin{equation}\label{eq:lambda1}
W(\rho^\eps(x_\eps))+W'(\rho^\eps(x_\eps))(\rho^\eps(y_\eps)-\rho^\eps(x_\eps))+\lambda\ge W(\rho^\eps(y_\eps)).
\end{equation}

Exchanging the roles of $x_\eps$ and $y_\eps$ in \eqref{eq:xepsyeps} and using the fact that $\rho_{xx}^\eps(y_\eps)\le0$, we obtain similarly
\begin{equation}\label{eq:lambda2}
W(\rho^\eps(y_\eps))-W'(\rho^\eps(y_\eps))(\rho^\eps(y_\eps)-\rho^\eps(x_\eps))+\lambda\ge W(\rho^\eps(x_\eps)).
\end{equation}

To use these conditions analytically, we define the sets
$$
U_\lambda^W(A)=\{B\in \R_+:W(A)+W'(A)(B-A)+\lambda\ge W(B)\},
$$
so conditions \eqref{eq:lambda1} and \eqref{eq:lambda2} can be reformulated as:
\begin{equation}\label{eq:condUlambda}
\rho^\eps(y_\eps)\in U_\lambda^W(\rho^\eps(x_\eps)) \mbox{ and } \rho^\eps(x_\eps)\in U_\lambda^W(\rho^\eps(y_\eps)).
\end{equation}

We now finish the proof under the extra assumption that $\Sigma\cap(h,\infty)$ contains only one interval:

By (H4), we know that for any fixed $\eta>0$, $W$ is uniformly convex in $\{p\in \R^+:d(p,\Sigma)\ge\eta\}$; therefore we can choose $\lambda_0$ such that for all $\lambda<\lambda_0$, we have
\begin{equation}\label{eq:Ulambda}
U^W_\lambda(A)\subset (A-\eta,A+\eta)\;\;\;\mbox{for all $A\in\{p\in \R^+:p>h\cap d(p,\Sigma)\ge\eta\}$}.
\end{equation}

If $d(\rho^\eps(x_\eps),\Sigma)>\eta$, then, with $A=\rho^\eps(x_\eps)$, \eqref{eq:condUlambda} and \eqref{eq:Ulambda} imply
$$
|\rho^\eps(x_\eps)-\rho^\eps(y_\eps)|<\eta.
$$
The same holds for $d(\rho^\eps(y_\eps),\Sigma)>\eta$.

On the other hand, if $d(\rho^\eps(x_\eps),\Sigma)<\eta$ and $d(\rho^\eps(x_\eps),\Sigma)<\eta$, we take 
$$
z=\argmax_{t\in[x_\eps,y_\eps]}d(\rho^\eps(t),\Sigma);
$$
if $d(\rho^\eps(z),\Sigma)<\eta$, we are done. If $d(\rho^\eps(z),\Sigma)>\eta$, because we assume that $\Sigma\cap(h,\infty)$ is an interval, we know that $\rho_x^\eps(z)=0$. Therefore, the intervals $(x_\eps,z)$ and $(z,y_\eps)$ satisfy the hypothesis of the Lemma. Then, arguing as before, because $d(\rho^\eps(z),\Sigma)>\eta$, we have
$$
|\rho^\eps(x_\eps)-\rho^\eps(z)|<\eta\mbox{ and }|\rho^\eps(y_\eps)-\rho^\eps(z)|<\eta.
$$
By our definition of $z$, we can conclude
$$
|\rho^\eps(x_\eps)-\rho^\eps(y_\eps)|<\eta,
$$
which proves the Lemma with the extra assumption of $\Sigma\cap(h,\infty)$ contains one interval.

A more convoluted argument, as the one in the proof of Theorem~\ref{thm:osc} (below), can be made for the cases when $\Sigma\cap(h,\infty)$ contains more than one interval. It is not included here, as a proof of this Lemma can already be found in \cite{bellettini2012convergence} and the ideas of the argument can be found in the proof below.
\end{proof}

We now turn to the proof of Theorem~\ref{thm:osc}:

\begin{proof}[Proof of Theorem~\ref{thm:osc}]
We prove Theorem~\ref{thm:osc} by contradiction. Due to Lemma~\ref{lem:osch}, we know that the theorem would be proven if we can prove that there is no $\eta_0>0$, such that there exist a sequence of $\eps_i\to0$ and sequences of points $\{x_i\}$, $\{y_i\}$, that satisfy for all $i$
\begin{itemize}
\item $|y_i-x_i|<\frac{1}{i}$
\item $\max(|\rho_x^{\eps_i}(x_i)|,|\rho_x^{\eps_i}(y_i)|)<L$
\item $\rho^{\eps_i}(x_i)\to0$
\item $\rho^{\eps_i}(y_i)>b_1+\eta_0$ (See (H3)).
\end{itemize}
So, let's assume that such $\eta_0$ exists and derive a contradiction.

Without loss of generality, we assume, as in the proof of Lemma~\ref{lem:osch}, that $\rho_x^{\eps_i}(x_i)\ge 0$, $\rho_x^{\eps_i}(y_i)\ge 0$, $\rho_{xx}^{\eps_i}(x_i)\ge 0$ and $\rho_{xx}^{\eps_i}(y_i)\le 0$.

From the proof of Proposition~\ref{prop:infty} we know that the function
$$
G^\eps(\rho^\eps)=-\eps^2\rho^\eps\rho_{xx}^\eps+\eps^2\rho_x^{\eps2}+Q'(\rho^\eps)
$$ 
is uniformly in $C^\frac{1}{2}$, using the fact that $\rho_{xx}^{\eps_i}(x_i)\ge0$ and $\rho_{xx}^{\eps_i}(y_i)\le 0$, we can conclude that
$$
Q'(\rho^{\eps_i}(y_i))\le Q'(\rho^{\eps_i}(x_i))+C\frac{1}{i^\frac{1}{2}}+\frac{{\eps_i}^2}{2}L^2.
$$
Moreover, since $\rho^\eps(x_i)\to0$, for every $\kappa>0$, there exists $i_0$ such that 
$$
Q'(\rho^{\eps_i}(x_i))+C\frac{1}{i^\frac{1}{2}}+\frac{{\eps_i}^2}{2}L^2
<\kappa,
$$
for all $i>i_0$. This implies that $Q'(\rho^{\eps_i}(y_i))<\kappa$, for all $i>i_0$.

The first observation is that $Q'(b_1)=b_1W'(b_1)-W(b_1)=0$. This is just saying that the tangent of $W$ at $b_1$ intersects the origin, which is satisfied because $b_1=\inf_{t>0}\{W(t)=W^{**}(t)\}$ (for a picture see Figure 1). 

The second observation is that by (H4), we have $\int_{s_1}^{s_2}tW''(t)=\int_{s_1}^{s_2}Q''(t)>0$ for $s_1,\;s_2\in(b_1,m_0)$, where $m_0$ is defined in \eqref{eq:m0}. We deduce that, for every $\eta>0$, there exists $\kappa_0$, such that if $A\in(b_1,m_0)$ and $Q'(A)<\kappa_0$, then $A-b_1<\eta$. Therefore, we get will get a contradiction, if we show that $\rho^{\eps_i}(y_i)<m_0$.

\vspace{0.3cm}
\textbf{Claim I:}\textit{If $i$ is large enough, then $\rho^{\eps_i}(y_i)<m_0$.}
\vspace{0.3cm}

Again, we will prove \textbf{Claim I} by contradiction; if $\rho^\eps(y_i)\ge m_0$, then there exists $z_0^i\in[x_i,y_i]$ such that $\rho^i(z_0^i)=m_0$. If we assume also that $\rho_{xx}^{\eps_i}(z_0^i)\le0$, then proceeding as above, we get
$$
0<Q'(m_0)\le Q'(\rho^{\eps_i}(x_i))+C\frac{1}{i^\frac{1}{2}}+\frac{{\eps_i}^2}{2}L^2,
$$
and taking $i$ large enough it yields our desired contradiction. Therefore, we want to prove that we can indeed assure that  $\rho_{xx}^{\eps_i}(z_0^i)\le0$, for $i$ large enough:

\vspace{0.3cm}

\textbf{Claim II:} \textit{Let
\begin{equation}\label{eq:z01}
z_0=\sup\{t\in(x_i,y_i):\rho^{\eps_i}(t)=m_0\}.
\end{equation}
If
\begin{equation}\label{eq:z0}
W(m_0)+W'(m_0)(\rho^{\eps_i}(y_i)-m_0)<W(\rho^{\eps_i}(y_i))-C(y_i-z_0)^\frac{1}{2}(\rho^{\eps_i}(y_i)-m_0),
\end{equation}
for some $C$ independent of $i$, then for all $i$ big enough
$$
\rho_{xx}^{\eps_i}(z_0)\le 0.
$$}
\vspace{0.3cm}

\textbf{Proof of Claim II:}

Due to the assumption on $z_0$, we know that $\rho^{\eps_i}>m_0$ in $(z_0,y_i)$. Therefore, we know that $e^{\eps_i}(t)=W'(\rho^{\eps_i}(t))-\eps^2\rho_{xx}^{\eps_i}(t)$ is uniformly in $H^1(z_0,y_i)$ (see \eqref{eq:eepsbound}). We perform the following calculation
\begin{equation*}
\begin{array}{rl}
\int_{z_0}^{y_i}e^{\eps_i}(t)\rho_x^{\eps_i}(t)dt
&=
W(\rho^{\eps_i}(y_i))-W(\rho^{\eps_i}(z_0))
-\frac{{\eps_i}^2}{2}|\rho_x^{\eps_i}(y_i)|^2
+\frac{{\eps_i}^2}{2}|\rho_x^{\eps_i}(z_0)|^2\\
&\ge
W(\rho^{\eps_i}(y_i))-W(m_0)-\frac{\eps_i^2}{2}L^2.
\end{array}
\end{equation*}

Using the same arguments used to derive \eqref{eq:lambda1} in the proof of Lemma~\ref{lem:osch}, we get
$$
e^{\eps_i}(z_0)(\rho^{\eps_i}(y_i)-\rho^{\eps_i}(z_0))+C(y_i-z_0)^\frac{1}{2}(\rho^{\eps_i}(y_i)-\rho^{\eps_i}(z_0))
\geq
W(\rho^{\eps_i}(y_i))-W(\rho^{\eps_i}(z_0))-\frac{\eps_i^2}{2}L^2.
$$

If $\rho^{\eps_i}_{xx}(z_0)\ge0$ , then $W'(\rho^{\eps_i}(z_0))\ge e^{\eps_i}(z_0)$, and so
$$
W(\rho^{\eps_i}(z_0))+W'(\rho^{\eps_i}(z_0))(\rho^{\eps_i}(y_i)-\rho^{\eps_i}(z_0))
\geq
W(\rho^{\eps_i}(y_i))-C(y_i-z_0)^\frac{1}{2}(\rho^{\eps_i}(y_i)-\rho^{\eps_i}(z_0))-\frac{\eps_i^2}{2}L^2.
$$
Since $\frac{\eps_i^2}{2}L^2\to0$, if $i$ is large enough this contradicts \eqref{eq:z0}, and thus proves \textbf{Claim II}.
\vspace{0.3cm}

To finish the proof of \textbf{Claim I}, we have to show that if $z_0$ defined by \eqref{eq:z01} exist (in particular, if $\rho^{\eps_i}(y_i)\ge m_0$), then \eqref{eq:z0} holds. First, we note that
$$
W(m_0)+W'(m_0)(t-m_0)<W(t) \;\; \forall t\neq m_0,
$$
due to (H4). Therefore, there exists $\kappa_0>0$ such that
$$
W(m_0)+W'(m_0)(t-m_0)<W(t)-\kappa_0\;\;\;\hbox{for every $t$ s.t. $Q'(t)<\frac{Q'(m_0)}{2}$}
$$
(the choice of $\frac{Q'(m_0)}{2}$ is arbitrary).

Recalling that
$$
\limsup_{i\to\infty} Q'(\rho^{\eps_i}(y_i))\le0,
$$
we can take $i$ large enough such that 
$$
Q'(\rho^{\eps_i}(y_i))<\frac{Q'(m_0)}{2}
$$
and
$$
\frac{C}{i^\frac{1}{2}}(\rho^{\eps_i}(y_i)-\rho^{\eps_i}(z_0))<\kappa_0,
$$ 
so we can conclude that \eqref{eq:z0} holds and \textbf{Claim II} yields 
$$
\rho_{xx}^{\eps_i}(z_0^i)\le0.
$$
This completes the proof of \textbf{Claim I} and of the Theorem.
\end{proof}

\section{$H^1$ estimate in the good set $\Omega$}\label{sec:H1}
We want to show that $\rho^\eps$ is bounded in $H^1_{loc}(\Omega)$ uniformly in $\eps$, with $\Omega=\{\rho_0\notin\Sigma\}$, in other words, that $\rho^\eps$ does not oscillate in the "good" limiting set. We start with the following proposition:

\begin{proposition}\label{prop:disttosigma}
Let $\{\rho^\eps\}_{\eps>0}$ be a sequence of functions in $\mathcal{P}(\T)$ such that $\sup_\eps E^\eps[\rho^\eps]+\mathcal{G}^\eps(\rho^\eps)\le C$ and $\rho^\eps\to\rho_0$ in $\W$, given $\phi\in\mathcal{D}(\Omega)$, there exists $\eps_0>0$ such that for every $\eps<\eps_0$, we have $W''(\rho^\eps)\ge\lambda_\phi$ and $\rho^\eps\ge\lambda_\phi$ in the support of $\phi$, for some constant $\lambda_\phi>0$ independent of $\eps$.
\end{proposition}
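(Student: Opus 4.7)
The plan is to cover $K := \mathrm{supp}(\phi)$ by finitely many open intervals on each of which $d(\rho^\eps, \Sigma)$ is uniformly bounded below in $\eps$; since $0 \in \Sigma$ this will yield the lower bound on $\rho^\eps$, and (H4) together with the uniform $L^\infty$ bound from Proposition~\ref{prop:infty} will yield the lower bound on $W''(\rho^\eps)$.

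First I would invoke Corollary~\ref{cor:lsc} to pass to an open representative of $\Omega$, so that $K \subset \Omega$ is a compact subset of an open set. That Corollary associates, to every Lebesgue point $x^* \in \Omega$ with $\rho_0(x^*) \notin \Sigma$, an open interval $I_{x^*}$ centered at $x^*$, a threshold $\eps_{x^*} > 0$, and a strictly positive constant $\mu_{x^*} := d(\rho_0(x^*), \Sigma)/2$ such that $d(\rho^\eps(y), \Sigma) > \mu_{x^*}$ for all $y \in I_{x^*}$ and all $\eps < \eps_{x^*}$. The family $\{I_{x^*}\}$ covers the open representative of $\Omega$ (that representative is essentially built as such a union in the proof of Corollary~\ref{cor:lsc}), so by compactness of $K$ I extract a finite subcover $K \subset I_{x^*_1} \cup \cdots \cup I_{x^*_n}$. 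Setting $\mu := \min_i \mu_{x^*_i} > 0$ and $\eps_0 := \min_i \eps_{x^*_i} > 0$, every $y \in K$ and every $\eps < \eps_0$ satisfy $d(\rho^\eps(y), \Sigma) > \mu$.

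Because $0 \in \Sigma$ by (H3) and $\rho^\eps \ge 0$, this already gives $\rho^\eps(y) \ge d(\rho^\eps(y), \Sigma) > \mu$ on $K$ for $\eps < \eps_0$, which is the first half of the conclusion. For the second half, Proposition~\ref{prop:infty} supplies a uniform $L^\infty$ bound $\rho^\eps \le M$, so on $K$ the value $\rho^\eps(y)$ lies in the compact set $\{z \in [0, M] : d(z, \Sigma) \ge \mu\} \subset \Sigma^c$. Assumption (H4) then guarantees that $W''$ admits a positive minimum $\lambda'$ on this set, and taking $\lambda_\phi := \min(\mu, \lambda') > 0$ finishes the argument.

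The main obstacle I foresee is the covering claim, i.e.\ justifying that the intervals $\{I_{x^*}\}$ actually cover the open representative of $\Omega$; this requires a careful reading of (and possibly a slight reorganization of the argument in) Corollary~\ref{cor:lsc}, in particular tracking how the open representative is produced as the union of such intervals. Once this covering fact is in hand, the rest is a routine compactness argument combined with (H4).
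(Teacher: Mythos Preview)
Your proposal is correct and follows essentially the same approach as the paper: invoke Corollary~\ref{cor:lsc} to obtain, around each Lebesgue point of $\Omega$, an interval on which $d(\rho^\eps,\Sigma)$ is uniformly bounded below, then extract a finite subcover of $\mathrm{supp}(\phi)$ by compactness. Your write-up is in fact more explicit than the paper's in spelling out why a lower bound on $d(\rho^\eps,\Sigma)$ yields both $\rho^\eps\ge\lambda_\phi$ (via $0\in\Sigma$) and $W''(\rho^\eps)\ge\lambda_\phi$ (via (H4) and the $L^\infty$ bound); the only caveat---which you already flagged---is that the bound on $d(\rho^\eps,\Sigma)$ (as opposed to $d(\rho_0,\Sigma)$) comes from the \emph{proof} of Corollary~\ref{cor:lsc} rather than its literal statement.
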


\begin{proof}
By assumption, if $x\in\Omega$, then $d(\rho_0(x),\Sigma(W))>0$. By Corollary~\ref{cor:lsc}, for any Lebesgue point $x\in \Omega$ there exists $\eps_x$ and $\delta_x$ such that for every $\eps<\eps_x$ and every $z\in(x-\delta_x,x+\delta_x)$
$$
d(\rho^\eps(z),\Sigma)>\frac{d(\rho_0(x),\Sigma)}{3}.
$$

Now, the family of intervals $\{(x-\delta_x, x+\delta_x)\}_{x\in \hat{\Omega}}$, where $\hat{\Omega}$ is the Lebesgue points of $\Omega$, is an open covering of the support of $\phi$, therefore by compactness there exists a finite sub-covering, which proves the proposition. 
\end{proof}
Using Proposition~\ref{prop:disttosigma} we can now prove:
 
 \begin{proposition}\label{prop:H1loc}
Let $\{\rho^\eps\}_{\eps>0}$ be a sequence of functions in $\mathcal{P}(\T)$ such that $\sup_\eps E^\eps[\rho^\eps]+\mathcal{G}^\eps(\rho^\eps)\le C$ and $\rho^\eps\to\rho_0$ in $\W$, for any $K\subset \Omega$ compact, there exists $C$ and $\eps_K>0$ such that
$$
\int_K|\rho_x^\eps|^2\;dx<C\;\;\;\;\;\forall \eps<\eps_K.
$$
Therefore, up to a subsequence, $\rho^\eps$ converges pointwise to $\rho_0$ a.e. in $\Omega$.
\end{proposition}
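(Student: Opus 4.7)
The plan is to use the formal identity $G^\eps(\rho^\eps)_x = \rho^\eps (W'(\rho^\eps)-\eps^2 \rho^\eps_{xx})_x$ in the good set $\Omega$, where by Proposition~\ref{prop:disttosigma} both $\rho^\eps$ and $W''(\rho^\eps)$ are bounded below by a positive constant uniformly in (small) $\eps$. Fix $K\subset\Omega$ compact and choose $\phi\in C_c^\infty(\Omega)$ with $\phi\equiv 1$ on $K$ and $0\le\phi\le 1$. By Proposition~\ref{prop:disttosigma}, there exist $\lambda_\phi>0$ and $\eps_\phi>0$ such that $\rho^\eps\ge\lambda_\phi$ and $W''(\rho^\eps)\ge\lambda_\phi$ on $\mathrm{supp}\,\phi$ for every $\eps<\eps_\phi$; by Proposition~\ref{prop:existenceeps}-style interior regularity, $\rho^\eps$ is $C^4$ on a neighborhood of $\mathrm{supp}\,\phi$, so all integrations by parts below are legitimate.

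The central computation is to multiply the identity $W''(\rho^\eps)\rho^\eps_x-\eps^2\rho^\eps_{xxx}=(W'(\rho^\eps)-\eps^2\rho^\eps_{xx})_x=G^\eps(\rho^\eps)_x/\rho^\eps$ by $\phi^2\rho^\eps_x$ and integrate. Since $\mathrm{supp}\,\phi\subset\Omega$ and $\rho^\eps\ge\lambda_\phi$ there, the Corollary after Proposition~\ref{prop:infty} gives
\begin{equation*}
\bigl\|\phi\,(W'(\rho^\eps)-\eps^2\rho^\eps_{xx})_x\bigr\|_2 \le \tfrac{1}{\lambda_\phi}\|G^\eps(\rho^\eps)_x\|_2 \le \tfrac{C}{\lambda_\phi}.
\end{equation*}
For the $\eps^2\rho^\eps_{xxx}$ term I would integrate by parts once to obtain
\begin{equation*}
-\eps^2\!\int \phi^2\rho^\eps_x\rho^\eps_{xxx}\,dx = \eps^2\!\int\phi^2(\rho^\eps_{xx})^2\,dx + \eps^2\!\int 2\phi\phi_x\rho^\eps_x\rho^\eps_{xx}\,dx,
\end{equation*}
and on the last (sign-indefinite) cross term integrate $((\rho^\eps_x)^2)_x$ by parts, yielding $-\eps^2\int(\phi\phi_x)_x (\rho^\eps_x)^2\,dx$, which is bounded in absolute value by $C(\phi)\,\eps^2\int(\rho^\eps_x)^2\,dx\le 2C(\phi)\,E^\eps[\rho^\eps]\le C'(\phi)$. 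Combining these estimates with Cauchy–Schwarz on the right-hand side and using $W''(\rho^\eps)\ge \lambda_\phi$, I obtain, writing $A_\eps:=\|\phi\,\rho^\eps_x\|_2$,
\begin{equation*}
\lambda_\phi A_\eps^2 + \eps^2\!\int\phi^2(\rho^\eps_{xx})^2\,dx \;\le\; C'(\phi) + \tfrac{C}{\lambda_\phi} A_\eps,
\end{equation*}
which forces $A_\eps\le C''(\phi,\lambda_\phi)$ uniformly in $\eps<\eps_\phi$. Restricting to $K$ (where $\phi\equiv 1$) yields the claimed uniform $H^1$ bound on $\rho^\eps$.

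For the pointwise convergence, Proposition~\ref{prop:infty} gives $\rho^\eps\rightharpoonup \rho_0$ weak-$*$ in $L^\infty$, and the uniform $H^1_{loc}(\Omega)$ bound just established, together with Rellich–Kondrachov, allows to extract a subsequence converging strongly in $L^2_{loc}(\Omega)$ and a.e. in $\Omega$; the limit must coincide with $\rho_0$ by uniqueness of the weak-$*$ limit, and since the limit is the same along every such subsequence the full family converges a.e. in $\Omega$.

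The main technical obstacle is the $\eps^2\rho^\eps_{xxx}$ cross-term: naively one only has $\eps\rho^\eps_x$ (not $\eps\rho^\eps_{xx}$) controlled in $L^2$, so the integration by parts must be arranged so that the leading fourth-order piece $\eps^2\int\phi^2(\rho^\eps_{xx})^2$ appears with the correct sign and is absorbed on the left, while the sign-indefinite cross term is reduced via a second integration by parts to an integral of $(\rho^\eps_x)^2$ weighted by $\eps^2$, which is exactly what the $E^\eps$-bound controls.
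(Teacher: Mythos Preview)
Your proof is correct and follows essentially the same route as the paper: multiply $(W'(\rho^\eps)-\eps^2\rho^\eps_{xx})_x$ by a cutoff times $\rho^\eps_x$, integrate by parts so that the good terms $W''(\rho^\eps)|\rho^\eps_x|^2$ and $\eps^2(\rho^\eps_{xx})^2$ appear with the right sign, control the cross term via a further integration by parts and the $E^\eps$ bound, and bound the right-hand side using the lower bound $\rho^\eps\ge\lambda_\phi$ together with the $H^1$ bound on $G^\eps(\rho^\eps)$; the only cosmetic differences are your use of $\phi^2$ instead of $\phi$ and the explicit Rellich--Kondrachov argument for the a.e.\ convergence. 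One small overreach: the final clause ``the full family converges a.e.'' does not follow from the subsequence argument you give (identical limits along subsequences yield convergence in $L^2_{loc}$, not a.e.), but the proposition only asserts convergence up to a subsequence, so this does not affect correctness.
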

\begin{proof}
Take $\phi\in D(\Omega)$, with $\phi\ge\chi_K$. We start with the following computation,
$$
\int_\T \rho^\eps_x\partial_x[W'(\rho^\eps)-\eps^2\rho^\eps_{xx}]\phi\;dx
=\int_\T W''(\rho^\eps)|\rho_x^\eps|^2\phi\;dx +\eps^2\int_\T |\rho^\eps_{xx}|^2\phi\;dx+\eps^2\int_\T \rho^\eps_x\rho^\eps_{xx}
\phi_x\;dx,
$$
from which we deduce
$$
\begin{array}{rl}
\int_\T W''(\rho^\eps)|\rho^\eps_x|^2\phi &+\eps^2\int_\T |\rho^\eps_{xx}|^2\phi\;dx
=-\eps^2\int_\T \rho^\eps_x\rho^\eps_{xx}
\phi_x\;dx+\int_\T \rho^\eps_x\partial_x[W'(\rho^\eps)-\eps^2\rho^\eps_{xx}]\phi\;dx\\
&=\frac{\eps^2}{2}\int_\T |\rho^\eps_x|^2
\phi_x\;dx+\int_\T \rho^\eps_x\partial_{xx}[W'(\rho^\eps)-\eps^2\rho^\eps_{xx}]\phi\;dx\\
&\le C(\phi_{xx})\eps^2\int_\T |\rho_x^\eps|^2\;dx+\left(\int_\T|\rho_x^\eps|^2\phi\;dx \right)^\frac{1}{2}\left(\int_\T \phi|\partial_x[W'(\rho^\eps)-\eps^2\rho^\eps_{xx}]|^2\;dx\right)^\frac{1}{2}\\
&\le C\eps^2\int_\T |\rho_x^\eps|^2\;dx+\frac{\lambda_\phi}{2}\int_\T|\rho_x^\eps|^2\phi\;dx +C(\lambda_\phi)\int_\T \phi|\partial_x[W'(\rho^\eps)-\eps^2\rho^\eps_{xx}]|^2\;dx,
\end{array}
$$
with the constant $\lambda_\phi$ given by Proposition~\ref{prop:disttosigma}.
Therefore, we get:
$$
\begin{array}{rl}
\left(\inf_{x\in supp\{\phi\}} W''(\rho^\eps(x))-\frac{\lambda_\phi}{2}\right)\int_\T |\rho_x^\eps|^2\phi\;dx
\le& C\eps^2\int_\T |\rho_x^\eps|^2\;dx\\

&+C(\lambda_\phi)\int_{supp\{\phi\}} |\partial_x[W'(\rho^\eps)-\eps^2\rho^\eps_{xx}]|^2\;dx
\end{array}
$$
Using Proposition~\ref{prop:disttosigma} we can conclude that in the support of $\phi$ we have that $W''(\rho^\eps)>\lambda_\phi$ and that $\rho^\eps>\lambda_\phi$, for $\eps<\eps_0$, so we deduce
$$
\int_K|\rho_x^\eps|^2\;dx\le\int_\T |\rho_x^\eps|^2\phi\;dx\le C(\phi)E^\eps[\rho^\eps]+\frac{C(\lambda_\phi)}{\lambda_\phi}\mathcal{G}^\eps(\rho^\eps)\le C \;\;\;\;\forall\eps<\eps_0.
$$
\end{proof}

\begin{proposition}\label{prop:convQ'} Let $\{\rho^\eps\}_{\eps>0}$ be a sequence of functions in $\mathcal{P}(\T)$ such that $\sup_\eps E^\eps[\rho^\eps]+\mathcal{G}^\eps(\rho^\eps)\le C$ and $\rho^\eps\to\rho_0$ in $\W$, then
$$
(W'(\rho^\eps)-\eps^2\rho^\eps_{xx})_x\rho^\eps\to Q'(\rho_0)_x \;\;\;\mbox{in $D'(\Omega)$}.
$$
\end{proposition}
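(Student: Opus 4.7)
The plan is to exploit the algebraic identity that motivates the definition of $G^\eps$ in \eqref{eq:Geps}. A direct differentiation (using $Q''(y)=yW''(y)$ and cancellation between the $3\eps^2\rho^\eps_x\rho^\eps_{xx}$ coming from $\tfrac{3\eps^2}{2}(\rho^\eps_x)^2$ and the $3\eps^2\rho^\eps_x\rho^\eps_{xx}$ coming from $(\rho^\eps\rho^\eps_x)_{xx}$) gives
$$
G^\eps(\rho^\eps)_x \;=\; Q''(\rho^\eps)\rho^\eps_x - \eps^2\rho^\eps\rho^\eps_{xxx} \;=\; \rho^\eps\bigl(W'(\rho^\eps)-\eps^2\rho^\eps_{xx}\bigr)_x,
$$
so it is enough to prove $G^\eps(\rho^\eps)_x\to Q'(\rho_0)_x$ in $\mathcal{D}'(\Omega)$. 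Writing out $G^\eps(\rho^\eps)=Q'(\rho^\eps)+\tfrac{3\eps^2}{2}(\rho^\eps_x)^2-\eps^2(\rho^\eps\rho^\eps_x)_x$, testing against an arbitrary $\phi\in\mathcal{D}(\Omega)$, and integrating by parts (the boundary terms vanish because $\mathrm{supp}\,\phi$ is compact in $\Omega$), I obtain
$$
\langle G^\eps(\rho^\eps)_x,\phi\rangle = -\!\int_\T Q'(\rho^\eps)\phi_x\,dx - \tfrac{3\eps^2}{2}\!\int_\T(\rho^\eps_x)^2\phi_x\,dx - \eps^2\!\int_\T\rho^\eps\rho^\eps_x\phi_{xx}\,dx.
$$

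For the main term, Proposition~\ref{prop:infty} provides the uniform bound $\|\rho^\eps\|_\infty\le M$, and Proposition~\ref{prop:H1loc} yields pointwise a.e.\ convergence $\rho^\eps\to\rho_0$ on $\Omega$ along a subsequence; dominated convergence then gives $Q'(\rho^\eps)\to Q'(\rho_0)$ in $L^1_{loc}(\Omega)$, so the first term converges to $-\int_\T Q'(\rho_0)\phi_x\,dx=\langle Q'(\rho_0)_x,\phi\rangle$. For the two remaining terms, the $\eps^2$ prefactor combined with the uniform $H^1_{loc}(\Omega)$ estimate of Proposition~\ref{prop:H1loc} kills them: with $K=\mathrm{supp}(\phi)\subset\Omega$ compact and $\|\rho^\eps_x\|_{L^2(K)}\le C$ for $\eps$ small,
$$
\Bigl|\tfrac{3\eps^2}{2}\!\int_\T(\rho^\eps_x)^2\phi_x\,dx\Bigr|\le C\eps^2\|\phi_x\|_\infty\to 0,\qquad \Bigl|\eps^2\!\int_\T\rho^\eps\rho^\eps_x\phi_{xx}\,dx\Bigr|\le C\eps^2\|\phi_{xx}\|_\infty M\to 0.
$$

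There is no real obstacle here, since Proposition~\ref{prop:H1loc} and Corollary~\ref{cor:lsc} have already done the hard work: on compact subsets of $\Omega$ the sequence $\rho^\eps$ is uniformly positive, uniformly bounded, uniformly in $H^1$, and converges pointwise a.e. The only subtle point is conceptual rather than technical, namely the reason the definition of $G^\eps$ is set up the way it is: rewriting the distributional product $\rho^\eps(W'(\rho^\eps)-\eps^2\rho^\eps_{xx})_x$ as the derivative of an explicit $L^1_{loc}(\Omega)$ expression whose only non-vanishing limit piece is $Q'(\rho_0)$ is exactly what allows a termwise passage to the limit and produces $Q'$ (and hence later $W^{**\prime}$) out of the Cahn--Hilliard nonlinearity.
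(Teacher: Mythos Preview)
Your proof is correct and essentially identical to the paper's: you reach the same three-term decomposition $-\int Q'(\rho^\eps)\phi_x - \tfrac{3\eps^2}{2}\int(\rho^\eps_x)^2\phi_x - \eps^2\int\rho^\eps\rho^\eps_x\phi_{xx}$ (the paper arrives there by two successive integrations by parts, you by invoking the $G^\eps$ identity, which amounts to the same computation), and you handle the main term the same way via Proposition~\ref{prop:H1loc} and dominated convergence. The only cosmetic difference is that for the last error term the paper integrates by parts once more to $\tfrac{\eps^2}{2}\int|\rho^\eps|^2\phi_{xxx}$ and uses only the $L^\infty$ bound, whereas you bound it directly using the local $H^1$ estimate; both are valid.
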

\begin{proof}
Fix $\phi\in D(\Omega)$, then using that $\rho W''(\rho)=Q''(\rho)$ with an integration by parts, we get
\begin{equation}\label{eq:prop6.3}
\int_\T (W'(\rho^\eps)_x-\eps^2\rho^\eps_{xxx})\rho^\eps\phi\;dx
=-\int_\T Q'(\rho^\eps)\phi_x\;dx +\eps^2\int_\T\rho^\eps_{xx}\rho_x^\eps\phi\;dx+\eps^2\int_\T\rho^\eps_{xx}\rho^\eps\phi_x\;dx
\end{equation}
The first term converges to what we are looking for
$$
\lim_{\eps\to0}-\int_\T Q'(\rho^\eps)\phi_x\;dx=-\int_\T Q'(\rho_0)\phi_x\;dx,
$$
by Lebesgue Dominated convergence and Proposition~\ref{prop:H1loc}.

It remains to show that the last two terms in \eqref{eq:prop6.3} go to zero. Integrating by parts again, we get
$$
\eps^2\int_\T\rho^\eps_{xx}\rho_x^\eps\phi\;dx+\eps^2\int_\T\rho^\eps_{xx}\rho^\eps\phi_x\;dx=-\frac{3}{2}\eps^2\int_\T|\rho_x^\eps|^2\phi_x\;dx-\eps^2\int_\T\rho^\eps_{x}\rho^\eps\phi_{xx}\;dx
$$
The first term goes to zero, by applying Proposition~\ref{prop:H1loc}. The second term can be re-written as
$$
\frac{1}{2}\eps^2\int_\T|\rho^\eps|^2\phi_{xxx}\;dx,
$$
which goes to zero, because $\rho^\eps$ is in $L^\infty$ uniformly.
\end{proof}

\section{Proof of Theorem~\ref{thm:lscgrad}}\label{sec:thmlscgrad}\label{sec:prooflscgrad}
\begin{proof}
To begin with, we assume that $\liminf \mathcal{G}^\eps(\rho^\eps)<\infty$, otherwise there is nothing to prove. Therefore, up to relabeling, we can consider $\rho^\eps$
such that 
$$
\sup_\eps E^\eps(\rho^\eps)+\mathcal{G}^\eps(\rho^\epsilon)<\infty.
$$
By Proposition~\ref{prop:infty}, we know that, up to subsequence, $\rho^\eps\rightharpoonup\rho_0$ weak-$*$ $L^\infty$; we define $\Omega=\{\rho_0\notin\Sigma\}$. We start with the following bound: using Proposition~\ref{prop:disttosigma}, for any $K\subset\Omega$ compact, we have, for all $\eps$ small enough
$$
\mathcal{G}^{\eps}(\rho^\eps)^2\ge\int_{\{\rho^\eps>0\}} |(W'(\rho^\eps)-\eps^2\rho_{xx}^\eps|^2\rho^\eps\;dx\ge\int_K |(W'(\rho^\eps)-\eps^2\rho_{xx}^\eps)_x|^2\rho^\eps\;dx.
$$
Furthermore, we have
$$
\int_K |(W'(\rho^\eps)-\eps^2\rho_{xx}^\eps)_x|^2\rho^\eps\;dx\ge2\int_\T (W'(\rho^\eps)-\eps^2\rho_{xx}^\eps)_x\phi \rho^\eps\;dx-\int_\T \phi^2\rho^\eps\;dx\;\;\;\;\mbox{for all $\phi\in\mathcal{D}(K)$},
$$
which implies
$$
\liminf_{\eps\to0} \mathcal{G}^{\eps}(\rho^\eps)^2\ge \liminf_{\eps\to 0}\left[2\int_\T (W'(\rho^\eps)-\eps^2\rho_{xx}^\eps)_x\phi \rho^\eps\;dx-\int_\T \phi^2 \rho^\eps\;dx\right]\;\;\;\;\mbox{for all $\phi\in\mathcal{D}(K)$}.
$$
By Proposition~\ref{prop:convQ'}, we deduce
$$
\liminf_{\eps\to0} \mathcal{G}^{\eps}(\rho^\eps)^2\ge\sup_{\phi\in D(K)}-2\int_\T Q'(\rho_0)\phi_x\;dx-\int_\T \phi^2\rho_0\;dx.
$$
By Proposition~\ref{prop:H1loc}, and the lower semi continuity of the $H^1$ seminorm we also know that $\rho_0$ is in $H^1_{loc}(\Omega)$, so we can integrate by parts
$$
\begin{array}{rl}
\lim_\eps \mathcal{G}^\eps(\rho^\eps)^2&\ge\sup_{\phi\in D(K)}2\int_\T Q'(\rho_0)_x\phi\;dx-\int_\T \phi^2d\rho_0\;dx\\
&=\sup_{\phi\in D(K)}2\int_\T W'(\rho_0)_x\rho_0\phi\;dx-\int_\T \phi^2d\rho_0\;dx\\
&=||W'(\rho_0)_x||^2_{L_{\rho_0}^2(K)}.\\
\end{array}
$$
Taking $K\to\Omega$ we obtain
$$
\lim_\eps |\mathcal{G}^\eps|^2(\rho^\eps)
\ge
||W'(\rho_0)_x||^2_{L_{\rho_0}^2(\Omega)}.
$$

The rest of the proof is devoted to proving
$$
||W'(\rho_0)_x||^2_{L_{\rho_0}^2(\Omega)}=|\nabla E^{**}|(\rho_0).
$$

First, to have $||W^{**\prime}(\rho_0)_x||^2_{L^2_{\rho_0}(\T)}=|\nabla E^{**}|(\rho_0)$, we need to prove that $Q^{**\prime}\in W^{1,1}$ (see \eqref{eq:calGeps}). We prove this by proving that
$$
Q^{**\prime}(\rho_0)_x=Q^{**\prime}(\rho_0)_x \mathbbm{1}_\Omega\;\;\;\mbox{in $\mathcal{D}'(\T)$}.
$$

Since $\rho^\eps$ is continuous, if $\rho^\eps(x)\in\Sigma_i$ and $\rho^\eps(y)\in\Sigma_j$, there exists $z\in(x,y)$ such that $d(\rho^\eps(z),\Sigma)\ge\inf_{i\ne j}\frac{d(\Sigma_i,\Sigma_j)}{2}$. By Corollary~\ref{cor:lsc}, we know that $d(\rho^\eps,\Sigma)$ is uniformly lower semi continuous, therefore there exists $\delta_0$, independent of $\eps$, such that $d(\rho^\eps(t)),\Sigma)>0$, for any $t\in (z_0-\delta_0,z_0+\delta_0)$, then $|x-y|>2\delta_0$. This implies that the sets $C_i=\{\rho_0\in \Sigma_i\}$ are at a non zero distance from each other.

We define, as an auxiliary function, $w$ in $\Sigma$ by 
$$
w(x)=Q^{**\prime}(\Sigma_i)\;\;\mbox{if $x\in C_i$},
$$
and we extend it to the whole of $\T$ by linear interpolation. Since the sets $C_i$ are separated, the function $w$ is Lipschitz. Moreover, $Q^{**\prime}(\rho_0)=w$ in $\Omega^c$, then for every $\phi\in\mathcal{D}(\T)$
$$
\int_\T (Q^{**\prime}(\rho_0)-w)\phi_x\;dx=\int_\Omega (Q^{**\prime}(\rho_0)-w)\phi_x\;dx.
$$
Integrating by parts we have no boundary term, and so
$$
\int_\T (Q^{**\prime}(\rho_0)-w)\phi_x\;dx=-\int_\Omega (Q^{**\prime}(\rho_0)-w)_x\phi\;dx.
$$
Because $w$ is Lipschitz and $w_x=0$ in $\Sigma$, then
$$
\int_\Omega w_x\phi\;dx=\int_\T w_x\phi\;dx=-\int_\T w\phi_x\;dx.
$$
Therefore, we obtain that
$$
\int_\T Q^{**\prime}(\rho_0)\phi_x\;dx=-\int_\Omega Q^{**\prime}(\rho_0)_x\phi\;dx,
$$
for every $\phi\in\mathcal{D}(\T)$.

Similarly, we can prove that $W^{**\prime}(\rho_0)_x=W'(\rho_0)_x\mathbbm{1}_\Omega$, so we obtain the desired equality:
$$
||W'(\rho_0)_x||^2_{L_{\rho_0}^2(\Omega)}=|\nabla E^{**}|(\rho_0)=||W^{**\prime}(\rho_0)_x||^2_{L^2_{\rho_0}(\T)}).
$$
\end{proof}

\section{Proof of Theorem~\ref{thm:conv}}\label{sec:proofthmconv}
\begin{proof}
To be able to apply the framework developed by Sandier-Serfaty \cite{sandier2004gamma}, we have to prove the compactness of the family $\nu^\eps$ with respect to time. 

Because the diameter of $\T$ is finite we know that the diameter of $\W$ is also finite, then
$$
\nu^\eps\in L^\infty([0,T];\W).
$$
By the energy inequality \eqref{eq:epsgradflow}, we know that
$$
\int_0^T|\nu^{\eps\prime}(t)|^2\;dt
$$ 
is uniformly bounded and therefore we know that
$$
\mbox{$\nu^\eps$ is uniformly bounded in $H^1((0,T);\W)$}.
$$
By \cite{lisini2007characterization}, we deduce that $\nu^\eps$ is precompact in $L^2([0,T];\W)$, so up to a subsequence 
$$
\nu^\eps\to \mu\;\;\;\mbox{in $L^2([0,T];\W)$}.
$$
Also, 
$$
\int_0^T|\nu^{\eps\prime}(t)|^2\;dt=\sup_{h\in(0,T)}\int_0^{T-h} \frac{d_2(\nu^\eps(t),\nu^\eps(t+h))}{h}\;dt
$$
is lower-semicontinuous with respect to the convergence in $L^2([0,T];\W)$, hence
\begin{equation}\label{eq:lsctime}
\liminf_{\eps\to0}\int_0^T|\nu^{\eps\prime}(s)|^2ds\ge\int_0^T|\mu'(s)|^2ds.
\end{equation}
Furthermore, by Arsela-Ascoli, we also know that up to a further subsequence,
$$
\nu^\eps\to\mu\;\;\;\mbox{in $C^0([0,T];\W)$.}
$$ 
In particular, 
$$
\nu^\eps_i\to\nu_i=\mu(0).
$$

Now, we only have to follow the proof in \cite{sandier2004gamma} and obtain that $\mu$ is the gradient flow of $E^{**}$ with initial condition $\nu_i$:

By equation \eqref{eq:epsgradflow}, we know that
$$
E^\eps[\nu_i^\eps]-E^\eps[\nu^\eps(t)]\ge
\frac{1}{2}\int_0^t\mathcal{G}^\eps(\nu^\eps)^2\;ds
+ \frac{1}{2}\int_0^t|\nu^{\eps\prime}|^2\;ds
$$
Taking the limit $\eps\to0$, using Fatou's Lemma, Theorem~\ref{thm:lscgrad} and \eqref{eq:lsctime} we get that
\begin{equation}\label{eq:enereps}
\liminf_{\eps\to0^+}(E^\eps[\nu_i^\eps]-E^\eps[\nu^\eps(t)])\ge
\frac{1}{2}\int_0^t|\nabla E^{**}(\mu)|^2\;ds
+ \frac{1}{2}\int_0^t|\mu'|^2\;ds.
\end{equation}
By Young's inequality, we know that
\begin{equation}\label{eq:ener**}
\frac{1}{2}\int_0^t|\nabla E^{**}(\mu)|^2\;ds
+ \frac{1}{2}\int_0^t|\mu'|^2\;ds\ge\int_0^t|\nabla E^{**}(\mu)||\mu'|\;ds.
\end{equation}
Because $E^{**}$ is convex with respect to the geodesics in $\W$, we can apply Theorem~\ref{thm:convexstrongupgrad} to obtain
\begin{equation}\label{eq:fundcalc}
\int_0^t|\nabla E^{**}(\mu)||\mu'|\;ds\ge E^{**}[\mu(0)]-E^{**}[\mu(t)].
\end{equation}

Since $\lim_\eps E^\eps[\nu_i^\eps]=E^{**}[\nu_i]=E^{**}[\mu(0)]$ by the well preparadness assumption, \eqref{eq:enereps}, \eqref{eq:ener**} \eqref{eq:fundcalc} imply
$$
\limsup E^\eps[\nu^\eps(t)]\le E^{**}[\mu(t)].
$$
The reverse inequality comes from the $\Gamma$-convergence of $E^\eps$ to $E^{**}$, so we have proven that
$$
\lim E^\eps[\nu^\eps(t)]= E^{**}[\mu(t)],
$$
and the inequalities \eqref{eq:enereps}, \eqref{eq:ener**} \eqref{eq:fundcalc} are in fact equalities. In particular \eqref{eq:enereps} yields
$$
E^{**}[\nu_i]-E^{**}[\mu(t)]=
\frac{1}{2}\int_0^t|\nabla E^{**}(\mu)|^2\;ds
+ \frac{1}{2}\int_0^t|\mu^{\prime}|^2\;ds
$$
for all $t>0$. Finally, by Theorem~\ref{thm:lambdaconvexflows}, we deduce that $\nu_0=\mu$.
\end{proof}

\appendix

\section{Gradient Flows in $\W$}\label{ap:wasserstein}
\subsection{General Metric Spaces}
We briefly review some important Definitions and Theorems of "Gradient flows: in metric spaces and in the space of probability measures" by Ambrosio, Gigli and Savare \cite{ambrosio2006gradient}. We start with some notions defined for a general complete metric space $(X,d)$, which we later analyze in the $\W=(\mathcal{P}(\T),d_2)$ case. We begin with the notion of an absolutely continuous curve
\begin{definition}
Let $v:(0,1)\to X$ be a curve, we say that $v\in AC^p(a,b;X)$ with $p\in[1,\infty)$, if there exists $m\in L^p(a,b)$ such that
\begin{equation}\label{eq:AC}
d(v(s),v(t))\le \int_s^t m(r)\;dr\;\;\;\; \forall a<s<t<b.
\end{equation}
If $p=1$, we suppress the superscript and just denote it by $AC$.
\end{definition}
Absolutely continuity is enough to define the size of a derivative at almost every point, this is the subject of the next theorem
\begin{theorem}\label{thm:metricder}
Let $v\in AC^p(a,b;X)$, then the limit
$$
|v'(t)|:=\lim_{h\to0}\frac{d(v(t+h)-v(t))}{h}
$$
exists a.e. in $(a,b)$ and $|v'|\in L^p(a,b)$. Moreover, it is minimal in the sense that it holds \eqref{eq:AC}, and if
$$
d(v(s),v(t))\le \int_s^t m(r)\;dr\;\;\;\; \forall a<s<t<b,
$$
then $|v'(t)|\le m(t)$ a.e. in $(a,b)$.
\end{theorem}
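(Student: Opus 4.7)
The plan is to follow the classical Ambrosio--Gigli--Savar\'e argument, which reduces the metric-valued differentiation problem to the scalar case via distance functions against a countable dense family. First I would observe that, since $v \in AC^p(a,b;X)$ is in particular continuous, its image $v([a,b])$ is compact and hence separable; fix a countable dense subset $\{x_n\}_{n \in \N}$ of that image and set $\psi_n(t) := d(v(t), x_n)$.

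By the reverse triangle inequality,
$$
|\psi_n(t) - \psi_n(s)| \le d(v(t), v(s)) \le \int_s^t m(r)\,dr,
$$
so each $\psi_n$ is a real-valued absolutely continuous function. The standard one-dimensional theory then provides a classical derivative $\psi_n'(t)$ a.e., with $|\psi_n'(t)| \le m(t)$ a.e. I would next define
$$
m^*(t) := \sup_{n \in \N} |\psi_n'(t)|,
$$
which is measurable as a countable supremum, dominated by $m$, and therefore belongs to $L^p(a,b)$. The key identity is
$$
d(v(s), v(t)) = \sup_n |\psi_n(s) - \psi_n(t)|,
$$
obtained by density: choosing $x_n$ arbitrarily close to $v(t)$ makes the right-hand side approach $d(v(s), v(t))$. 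Combined with the fundamental theorem of calculus for each $\psi_n$, this yields
$$
d(v(s), v(t)) = \sup_n \left|\int_s^t \psi_n'(r)\,dr\right| \le \int_s^t m^*(r)\,dr,
$$
so $m^*$ itself is admissible in \eqref{eq:AC}.

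For the existence of the metric derivative, I would then fix $t$ in the full-measure set consisting of Lebesgue points of $m^*$ at which all $\psi_n'$ are defined (a countable intersection of full-measure sets). The integral bound gives
$$
\frac{d(v(t+h), v(t))}{|h|} \le \frac{1}{|h|}\left|\int_t^{t+h} m^*(r)\,dr\right| \longrightarrow m^*(t) \quad \text{as } h \to 0,
$$
providing $\limsup_{h\to 0} d(v(t+h), v(t))/|h| \le m^*(t)$. For the matching liminf, for each fixed $n$,
$$
\frac{d(v(t+h), v(t))}{|h|} \ge \frac{|\psi_n(t+h) - \psi_n(t)|}{|h|} \longrightarrow |\psi_n'(t)|,
$$
so $\liminf \ge \sup_n |\psi_n'(t)| = m^*(t)$. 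This identifies $|v'(t)| = m^*(t)$ a.e., and places $|v'|$ in $L^p(a,b)$ with the bound \eqref{eq:AC}. Minimality is then automatic: if $m$ satisfies \eqref{eq:AC}, then each $\psi_n$ inherits $|\psi_n'| \le m$ a.e., so $|v'| = m^* \le m$ a.e.

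The only genuinely delicate point is the identity $d(v(s), v(t)) = \sup_n |\psi_n(s) - \psi_n(t)|$, which is what reduces the problem to a \emph{countable} family of scalar AC functions and thereby allows Lebesgue differentiation and the liminf/limsup matching to happen on a single common full-measure set. Everything else is routine real analysis in one variable.
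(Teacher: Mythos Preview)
Your argument is correct and is exactly the standard Ambrosio--Gigli--Savar\'e proof (Theorem~1.1.2 in \cite{ambrosio2006gradient}); the paper does not give its own proof of this statement but simply records it as background from that reference, so there is nothing further to compare. One cosmetic remark: you invoke compactness of $v([a,b])$ to get a countable dense set, but strictly speaking $v$ is only defined on the open interval $(a,b)$; it suffices to note that the image of a continuous map from a separable space is separable (or, equivalently, that the AC estimate forces uniform continuity and hence a continuous extension to $[a,b]$).
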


Now that we have the concept of the size of the derivative of a curve, we can give a notion the size of gradients for functionals defined in $X$. From now on, $\phi$ is a lower semi-continuous real-valued function on $X$.
\begin{definition}~\label{def:strongup}
A function $g:X\to[0,+\infty]$ is a strong upper gradient for $\phi$ if for any $v\in AC(a,b;X)$, the function $g\circ v$ is borel and
$$
|\phi(v(t))-\phi(v(s))|\le\int_s^t g\circ v(r)|v'(r)|\;dr\;\;\;\;\forall a<s<t<b.
$$
In particular, if $g\circ v(r)|v'(r)|\in L^1(a,b)$, then $\phi\circ v$ is absolutely continuous and
$$
|(\phi\circ v)'(t)|\le g\circ v(r)|v'(r)|\;\;\;\mbox{a.e. in (a,b)}.
$$
\end{definition}
The most natural candidate to satisfy the definition above is the slope of $\phi$.
\begin{definition}
The slope at $\phi$ at $v$ is defined by
$$
|\partial \phi(v)|:=\limsup_{w\to v}\frac{(\phi(w)-\phi(v))^+}{d(w,v)}.
$$
\end{definition}
To be able to relate the two definitions we need to consider a more restrictive set of functionals, for instance $\lambda$-convex functionals.
\begin{definition}\label{def:convex}
Given $\lambda\in\R$, we say that $\phi$ is $\lambda$-convex with respect to the geodesics, if for every $\gamma_t:[0,1]\to X$ constant speed geodesic, we have that
$$
\phi(\gamma_t)\le (1-t)\phi(\gamma_0)+t\phi(\gamma_1)-\frac{1}{2}\lambda t(1-t) (d(\gamma_0,\gamma_1))^2.
$$
\end{definition}

With this definition we can write the following Theorem

\begin{theorem}\label{thm:convexstrongupgrad}
Suppose that $\phi$ is $\lambda$ convex with respect to the geodesics, then $|\partial \phi|$ is a strong upper gradient.
\end{theorem}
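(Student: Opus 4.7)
The plan is to establish the equivalent ``global'' formula
\begin{equation}\label{eq:globalslope}
|\partial \phi|(v) \;=\; \sup_{w \neq v} \left(\frac{\phi(v)-\phi(w)}{d(v,w)} + \frac{\lambda}{2}\,d(v,w)\right)^{+}
\end{equation}
and then derive the strong upper gradient property from it. (I read the slope in the statement as the standard descending one, with $(\phi(v)-\phi(w))^+$ in the numerator; with the opposite sign convention the argument is symmetric.)

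\textbf{Step 1 (global formula and lower semicontinuity of $|\partial \phi|$).} Given $w\ne v$, let $\gamma_t$ be a constant-speed geodesic from $v$ to $w$. Subtracting $\phi(v)$ from both sides of the $\lambda$-convexity inequality of Definition~\ref{def:convex} and dividing by $d(v,\gamma_t)=t\,d(v,w)$ yields
$$
\frac{\phi(v)-\phi(\gamma_t)}{d(v,\gamma_t)} \;\ge\; \frac{\phi(v)-\phi(w)}{d(v,w)} + \frac{\lambda(1-t)}{2}\,d(v,w).
$$
Passing to the limit $t\to 0^+$ the left-hand side is dominated by $|\partial\phi|(v)$, which proves ``$\ge$'' in \eqref{eq:globalslope}; the reverse inequality is immediate from the pointwise $\limsup$ definition. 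Since for each fixed $w$ the map $v\mapsto\bigl(\tfrac{\phi(v)-\phi(w)}{d(v,w)}+\tfrac{\lambda}{2}d(v,w)\bigr)^+$ is lower semicontinuous (it uses only the lsc of $\phi$ and the continuity of $d$), \eqref{eq:globalslope} realizes $|\partial \phi|$ as a supremum of lsc functions and hence is lsc on $X$. In particular, for any continuous curve $v$, the composition $r\mapsto |\partial\phi|(v(r))$ is Borel, which is the first requirement in Definition~\ref{def:strongup}.

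\textbf{Step 2 (chord inequality and absolute continuity along AC curves).} Formula \eqref{eq:globalslope} rewrites as the one-sided chord bound
$$
\phi(v)-\phi(w) \;\le\; |\partial\phi|(v)\,d(v,w) - \tfrac{\lambda}{2}\,d(v,w)^2,
$$
valid for every $v,w\in X$. Interchanging the roles of $v$ and $w$ and taking absolute values gives
$$
|\phi(v)-\phi(w)| \;\le\; \max\{|\partial\phi|(v),|\partial\phi|(w)\}\,d(v,w) + \tfrac{|\lambda|}{2}\,d(v,w)^2.
$$
For an AC curve $v\in AC(a,b;X)$ with metric derivative $m(r)=|v'(r)|\in L^1$, a standard partition argument with consecutive nodes $r_0<r_1<\cdots<r_N$ and the above bound shows that $\phi\circ v$ is AC on any subinterval on which the slope is locally integrable along $v$. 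The general case is handled by truncation: on the Borel sets $E_N=\{r:|\partial\phi|(v(r))\le N\}$ one controls variations of $\phi\circ v$ uniformly, and one sends $N\to\infty$ using monotone convergence together with the fact that $\phi$ is bounded below along the image of a bounded interval (a consequence of lsc and the $AC$ bound on $v$).

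\textbf{Step 3 (pointwise estimate and conclusion).} Once $\phi\circ v$ is known to be AC, at every common Lebesgue point $r$ of $|\partial\phi|\circ v$ and $|v'|$, dividing the chord bound between $v(r)$ and $v(r+h)$ by $h$ and letting $h\to 0$ yields, thanks to Theorem~\ref{thm:metricder},
$$
\bigl|(\phi\circ v)'(r)\bigr| \;\le\; |\partial\phi|(v(r))\,|v'(r)|.
$$
Integrating over $(s,t)\subset(a,b)$ produces exactly the strong upper gradient inequality of Definition~\ref{def:strongup}. The main obstacle is Step 2: absolute continuity of $\phi\circ v$ must be secured before any pointwise differentiation is legitimate, and this is precisely where $\lambda$-convexity enters essentially, through the global characterization of Step 1 and the lsc of $|\partial\phi|$ it provides.
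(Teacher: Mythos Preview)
The paper's own proof is a one-line citation to Corollary~2.4.10 in \cite{ambrosio2006gradient}; your sketch reproduces precisely the argument behind that corollary (global slope formula from $\lambda$-convexity, lower semicontinuity of the slope, then the upper-gradient inequality), so you are taking the same route, just expanded rather than deferred. Your Step~2 truncation argument is a bit compressed---in \cite{ambrosio2006gradient} the passage from the chord bound to absolute continuity of $\phi\circ v$ is organized slightly differently via the weak upper gradient property (their Theorem~1.2.5) before upgrading through lower semicontinuity---but the strategy is sound, and your observation about the sign convention in the slope definition is well taken.
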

\begin{proof}
See Corollary 2.4.10 in \cite{ambrosio2006gradient}
\end{proof}

Now, we are ready to define the curves of maximal slope for $\lambda$-convex functionals,

\begin{definition}
We say that the locally absolutely continuous map $u:(a,b)\to X$ is a curve of maximal slope of $\phi$ with respect to its upper gradient $|\partial \phi|$ if
\begin{equation}\label{eq:maxslope}
\phi(u(t))-\phi(u(s))\ge\int_s^t\frac{|u'(r)|^2}{2}+\frac{|\partial \phi(u(r))|^2}{2}\;dr.
\end{equation}
\end{definition}
\begin{remark}
If $(X,d)$ is a Hilbert space, and $\phi$ is $\lambda$-convex, then $|\partial\phi(v)|$ is actually the norm of the minimal selection in the sub-differential at $v$. Moreover, $u(\cdot)$ is a curve of maximal slope, if and only if, $u(\cdot)$ is a gradient flow. This follows from an application of the Cauchy-Schwartz and Young's inequality. 
\end{remark}

\subsection{Wasserstein metric}
We start with some auxiliary definitions to be able to define the $L^2$-Wasserstein distance, $d_2$.

\begin{definition}
Given $\mu$, $\nu\in\mathcal{P}(\T)$, we call $\pi\in\mathcal{P}(\T\times\T)$ a transference plan if
$$
\pi(A\times\T)=\mu(A)\;\;\;\mbox{and}\;\;\;\pi(\T\times A)=\nu(A),
$$
for every Borel set $A$. 

We denote the set of transference plans from $\mu$ to $\nu$ as $\Pi(\mu,\nu)$.
\end{definition}
\begin{remark}
$\mu\times\nu\in\Pi(\mu,\nu)$.
\end{remark}

\begin{definition}
Given $\mu$, $\nu\in\mathcal{P}(\T)$, we define their $L^2$-Wasserstein distance as
$$
d_2^2(\mu,\nu)=\inf_{\pi\in\Pi(\mu,\nu)}\left\{\int_{\T\times \T}|x-y|^2\;d\pi(x,y)\right\}.
$$
\end{definition}
This distance has been extensively studied in the literature and we recommend \cite{villani2008optimal} and \cite{ambrosio2013user}, which also contains a pedagogical introduction to the gradient flow theory. In this work, we are mostly interested on its differential structure.
\begin{theorem}\label{thm:abscontinuity}
Let the curve $\mu_t:I\to\mathcal{P(\T)}$ be absolutely continuous with respect to $d_2$ and let $|\mu'|\in L^1(I)$ be its metric derivative, then there exists a Borel vector field $v$ such that
$$
||v(\cdot,t)||_{L^2_{\mu_t}}\le|\mu'(t)|\;\;\; \mbox{a.e. $t\in I$}
$$
and the continuity equation 
\begin{equation}\label{eq:continuity}
\partial_t\mu_t+\nabla\cdot(v(\cdot,t)\mu_t)=0
\end{equation}
is solved in the sense of distributions.

Conversely, if $\mu_t:I\to\mathcal{P(\T)}$ is continuous with respect to $d_2$ and satisfies the continuity equation \eqref{eq:continuity} for some Borel velocity field $v$ with $||v(\cdot,t)||_{L^2_{\mu_t}}\in L^1(I)$, then $\mu_t$ is absolutely continuous and $|\mu'(t)|\le||v(\cdot,t)||_{L^2_{\mu_t}}$ a.e. $t\in I$.
\end{theorem}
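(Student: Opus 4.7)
The plan is to follow the strategy of Ambrosio-Gigli-Savare (essentially Theorem 8.3.1 of \cite{ambrosio2006gradient}), proving the two implications by rather different means: the forward direction by a time-discretization/geodesic interpolation argument together with compactness, and the converse by regularization combined with the superposition principle.

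For the forward direction, given an absolutely continuous curve $\mu_t$ with $|\mu'|\in L^1(I)$, the idea is to approximate $\mu_t$ by piecewise geodesic curves. For each $n$ I would partition $I$ into sub-intervals $[t_k^n, t_{k+1}^n]$ of length $1/n$ and replace $\mu$ on each sub-interval by the (unique, in one dimension) $d_2$-geodesic $\mu^n_t$ connecting $\mu_{t_k^n}$ to $\mu_{t_{k+1}^n}$. Each such geodesic is a displacement interpolation and hence solves a continuity equation with some vector field $v_t^n$ whose $L^2_{\mu_t^n}$ norm equals the constant speed $d_2(\mu_{t_k^n},\mu_{t_{k+1}^n})/(t_{k+1}^n - t_k^n)$. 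Since this quotient is controlled by the average of $|\mu'|$ over the sub-interval, the measure-valued fields $v_t^n\mu_t^n\,dt$ on $I\times\T$ form a tight, bounded family. I would then extract a weak-$\ast$ limit, observe that $\mu_t^n\to\mu_t$ uniformly in $d_2$ (using the H\"older continuity inherited from $|\mu'|$), pass to the limit in the distributional continuity equation, and invoke the joint lower semicontinuity of the Benamou-Brenier functional $(\nu,E)\mapsto\int|E/\nu|^2\,d\nu$ under narrow convergence to recover $\|v_t\|_{L^2_{\mu_t}}\le|\mu'(t)|$ almost everywhere.

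For the converse, suppose $\mu_t$ is continuous and solves the continuity equation for some Borel $v$ with $\|v_t\|_{L^2_{\mu_t}}\in L^1(I)$. If $v$ were smooth, Cauchy-Lipschitz would give a flow $X_{s,t}$ pushing $\mu_s$ forward to $\mu_t$, and
$$
d_2(\mu_s,\mu_t)\le\left(\int_\T |X_{s,t}(x)-x|^2\,d\mu_s(x)\right)^{1/2}\le\int_s^t\|v_r\|_{L^2_{\mu_r}}\,dr
$$
by Minkowski's inequality along characteristics, which after dividing by $t-s$ and letting $t\to s$ yields the metric derivative bound. For general Borel $v$, I would either mollify $v$ in space and pass to the limit directly in the above inequality, or invoke the superposition principle, which represents $(\mu_t)$ as the marginal of a probability measure $\eta$ on absolutely continuous curves in $\T$ each satisfying $\dot\gamma(t)=v(\gamma(t),t)$ a.e.; integrating the pointwise estimate $|\gamma(t)-\gamma(s)|\le\int_s^t|v(\gamma(r),r)|\,dr$ against $\eta$ and applying Jensen gives the same conclusion.

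The main obstacle is the low regularity of $v$: classical uniqueness of characteristics fails for a merely $L^2_{\mu_t}$ field, and optimal transport maps need not exist in general dimension. The superposition principle (Theorem 8.2.1 of \cite{ambrosio2006gradient}), combined with the lower semicontinuity of the Benamou-Brenier integrand under narrow convergence, are precisely the non-trivial ingredients needed to bypass this difficulty. In the one-dimensional torus setting here, matters simplify appreciably because optimal transport between absolutely continuous measures is induced by monotone rearrangement, making both the geodesic interpolation $\mu_t^n$ and the approximating pushforward maps $X_{s,t}$ explicit; this would streamline the compactness step considerably without altering the overall structure of the argument.
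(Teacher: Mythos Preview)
Your proposal is correct and aligns with the paper's own treatment: the paper does not give an independent proof but simply refers the reader to Theorem 8.3.1 of \cite{ambrosio2006gradient}, which is exactly the argument you outline. Your sketch of the forward direction via geodesic interpolation and Benamou--Brenier lower semicontinuity, and of the converse via the superposition principle, faithfully reproduces that proof.
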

\begin{proof}
See Theorem 8.3.1. \cite{ambrosio2006gradient}
\end{proof}
\begin{remark}
We are missing an extra condition to uniquely determine the vector field $v$, as we could always perturb it by a field $w$ such that $\nabla\cdot(w\mu_t)=0$, without changing the continuity equation \eqref{eq:continuity}.
\end{remark}
\begin{definition}
Let $\mu\in\mathcal{P}(\T)$, we define
$$
Tan_\mu\mathcal{P}(\T)=cl(\{\nabla\phi:\phi\in C^\infty(\T)\}),
$$
where $cl$ denotes the closure with respect to the $L^2_\mu$ topology.
\end{definition}
\begin{theorem}\label{thm:abscontinuitytan}
Let $\mu_t:I\to\mathcal{P}(\T)$ be an absolutely continuous curve and let $v$ be such that the continuity equation \eqref{eq:continuity} is satisfied. Then,
$|\mu'(t)|=||v(\cdot,t)||_{L^2_{\mu_t}}$ a.e. $t\in I$, if and only if, $v\in Tan_{\mu_t}\mathcal{P}(\T)$ a.e. $t\in I$. 
Moreover, the vector field $v$ is a.e. uniquely determined by $|\mu'(t)|=||v(\cdot,t)||_{L^2_{\mu_t}}$.
\end{theorem}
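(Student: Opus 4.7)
The plan is to identify the minimal-$L^2_{\mu_t}$-norm solution of the continuity equation at each fixed time with the orthogonal projection onto $Tan_{\mu_t}\mathcal{P}(\T)$, and then read off both the characterization of equality and the uniqueness from standard Hilbert-space projection facts.

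First, I would observe that Theorem~\ref{thm:abscontinuity} already pins down the value of the optimal norm: for any Borel velocity field $w$ satisfying the continuity equation with $\mu_t$ one has $|\mu'(t)|\le \|w(\cdot,t)\|_{L^2_{\mu_t}}$ a.e., while the theorem also produces \emph{some} $v$ with $\|v(\cdot,t)\|_{L^2_{\mu_t}}\le |\mu'(t)|$ a.e. Thus, at a.e. $t$, the infimum of $\|w(\cdot,t)\|_{L^2_{\mu_t}}$ over the set $S_t$ of admissible velocity fields at time $t$ equals $|\mu'(t)|$ and is attained.

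Next, I would identify the structure of $S_t$ as an affine subspace of $L^2_{\mu_t}$ and compute its direction. Two fields $v,\tilde v$ both solve the continuity equation for $\mu_t$ if and only if their difference $w=v-\tilde v$ satisfies $\nabla\cdot(w\mu_t)=0$ distributionally for a.e.\ $t$. Testing against $\phi\in C^\infty(\T)$, this is exactly
$$
\int_\T w(\cdot,t)\cdot\nabla\phi\,d\mu_t=0 \quad\text{for all }\phi\in C^\infty(\T),
$$
i.e.\ $w(\cdot,t)\perp Tan_{\mu_t}\mathcal{P}(\T)$ in $L^2_{\mu_t}$, by definition of the closure. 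Hence the direction of the affine space $S_t$ is precisely $Tan_{\mu_t}\mathcal{P}(\T)^\perp$.

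Third, the minimal-norm element of a closed affine subspace of a Hilbert space is the unique element lying in the orthogonal complement of the direction. Therefore the minimizer in $S_t$ exists, is unique, and coincides with the orthogonal projection of any element of $S_t$ onto $Tan_{\mu_t}\mathcal{P}(\T)$. This proves simultaneously the forward implication (if $\|v(\cdot,t)\|_{L^2_{\mu_t}}=|\mu'(t)|$ a.e., then $v(\cdot,t)$ is the minimizer and so lies in $Tan_{\mu_t}\mathcal{P}(\T)$) and the converse (given any tangent admissible $v$, comparing it with the minimal-norm tangent $\tilde v$ produced above yields $v-\tilde v\in Tan_{\mu_t}\mathcal{P}(\T)\cap Tan_{\mu_t}\mathcal{P}(\T)^\perp=\{0\}$, so $\|v\|_{L^2_{\mu_t}}=\|\tilde v\|_{L^2_{\mu_t}}=|\mu'(t)|$). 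The a.e.\ uniqueness claim is immediate from uniqueness of the projection at each $t$.

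The main technical obstacle is not algebraic but measurability: one must verify that projecting a Borel vector field $v(x,t)$ onto the time-varying closed subspace $Tan_{\mu_t}\mathcal{P}(\T)\subset L^2_{\mu_t}$ yields another Borel field in the space-time sense, so that the projected object is a legitimate candidate solution of the continuity equation in distributions. I would handle this by approximating $v$ by smooth-in-$(x,t)$ gradient fields $\nabla\phi_n(\cdot,t)$, carrying out the projection by a direct variational argument on a countable dense family of test functions $\phi\in C^\infty(\T)$ (which plays the role of a basis for $Tan_{\mu_t}$), and using Fubini together with the Borel dependence of $\mu_t$ on $t$ to conclude joint measurability. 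Once measurability is secured, the projection steps above are purely Hilbert-space and give the stated characterization.
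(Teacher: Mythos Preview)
The paper does not give its own proof of this theorem: it simply cites Theorem 8.3.1 in \cite{ambrosio2006gradient}. Your sketch is essentially the argument that appears there --- identify the admissible velocity fields at time $t$ as an affine subspace of $L^2_{\mu_t}$ whose direction is $Tan_{\mu_t}\mathcal{P}(\T)^\perp$, then use Hilbert-space projection --- so there is no meaningful divergence in approach to compare. Your treatment of the measurability issue is a reasonable outline; in \cite{ambrosio2006gradient} this is handled slightly differently, but the substance is the same.
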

\begin{proof}
See Theorem 8.3.1. \cite{ambrosio2006gradient}. 
\end{proof}
Using the inner product structure in $L^2_\mu$, we are able to define the subdifferential of a $\lambda$-convex functional
\begin{definition}\label{def:strongsub}
We say that $\zeta\in L^2_\mu(\T)$ is a strong subdifferential of $\phi$ at $\mu$, denoted by $\partial \phi(\mu)$, if
$$
\phi(H\#\mu)-\phi(\mu)\le\int_\T <\zeta(x),H(x)-x>\;d\mu(x)+o(||H-I||_{L^2_\mu(\T)}),
$$
where $H$ is a Borel vector field and the push-forward $H\#\mu$ is defined by the condition $H\#\mu(A)=\mu(H^{-1}(A))$ for every Borel set $A$.
\end{definition}
We would like to characterize the strong subdifferentials of functionals like $E^\eps$, we consider
$$
\mathcal{F}[\mu]=\left\{\begin{array}{lr}
\int_\T F(x,\rho(x),\nabla\rho(x))dx&\mbox{if $\mu=\rho\; d\mathcal{L}$ and $\rho\in C^1(\T)$}\\
+\infty&\mbox{otherwise,}
\end{array}\right.
$$
where $d\mathcal{L}$ is the Lebesgue measure in $\T$. We denote $(x,z,p)\in\T\times\R\times\R$ the variables of F. To simplify, we ask that $F\in C^2$ and that $F(x,0,p)=0$ for every $x$ and $p$.
\begin{lemma}\label{lem:subdiff}
If $\mu=\rho\;d\mathcal{L}\in\mathcal{P}(T)$, with $\rho\in C^1$, satisfies $\mathcal{F}[\mu]<\infty$, then any strong subdifferential of $\mathcal{F}$ at $\mu$ is $\mu$-a.e. equal to
\begin{equation}
\nabla\frac{\delta\mathcal{F}}{\delta \rho}=\nabla(F_z(x,\rho(x),\nabla\rho(x))-\nabla\cdot F_p(x,\rho(x),\nabla\rho(x)).
\end{equation}
\end{lemma}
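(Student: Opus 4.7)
The plan is to characterize any strong subdifferential $\zeta$ of $\mathcal{F}$ at $\mu=\rho\,d\mathcal{L}$ by testing Definition~\ref{def:strongsub} against smooth perturbations of the identity and identifying the resulting linear functional on test fields with the first variation of $\mathcal{F}$ along the continuity equation.

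Fix $\xi\in C^\infty(\T)$. For $|t|$ small, $H_t:=I+t\xi$ is a $C^\infty$-diffeomorphism of $\T$, so $H_t\#\mu$ has a $C^1$ density $\rho_t$ determined by $\rho_t(H_t(x))\,H_t'(x)=\rho(x)$, and in particular $\mathcal{F}[H_t\#\mu]<\infty$. Since $\|H_t-I\|_{L^2_\mu}=|t|\,\|\xi\|_{L^2_\mu}$, the strong subdifferential inequality applied with the variations $+\xi$ and $-\xi$, divided by $t>0$ and sent to $t\to 0^+$, yields
\begin{equation*}
\frac{d}{dt}\bigg|_{t=0}\mathcal{F}[H_t\#\mu]\le\int_\T\zeta\,\xi\,d\mu\quad\text{and}\quad\frac{d}{dt}\bigg|_{t=0}\mathcal{F}[H_t\#\mu]\ge\int_\T\zeta\,\xi\,d\mu,
\end{equation*}
hence equality, provided the two-sided derivative on the left exists.

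Next, I would compute this derivative directly. A Taylor expansion of $H_t^{-1}$ combined with the change-of-variables formula gives $\rho_t=\rho-t(\rho\xi)_x+O(t^2)$ in $C^1(\T)$, in agreement with the continuity equation granted by Theorem~\ref{thm:abscontinuity}. Expanding $F(x,\rho_t,\nabla\rho_t)$ to first order (legitimate because $F\in C^2$ and $\rho,\xi\in C^1$), integrating over $\T$, and integrating by parts twice (no boundary contributions, by periodicity) yields
\begin{equation*}
\frac{d}{dt}\bigg|_{t=0}\mathcal{F}[H_t\#\mu]=-\int_\T\frac{\delta\mathcal{F}}{\delta\rho}\,(\rho\xi)_x\,dx=\int_\T\nabla\frac{\delta\mathcal{F}}{\delta\rho}\,\xi\,d\mu,
\end{equation*}
with $\frac{\delta\mathcal{F}}{\delta\rho}=F_z(x,\rho,\nabla\rho)-\partial_x F_p(x,\rho,\nabla\rho)$. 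Combining with the previous display gives $\int_\T(\zeta-\nabla\frac{\delta\mathcal{F}}{\delta\rho})\,\xi\,d\mu=0$ for every $\xi\in C^\infty(\T)$, and the density of $C^\infty(\T)$ in $L^2_\mu(\T)$ (valid because $\mu$ is a finite Borel measure on the compact torus) forces $\zeta=\nabla\frac{\delta\mathcal{F}}{\delta\rho}$ $\mu$-almost everywhere.

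The technical point I expect to require the most care is the first-variation calculation on the vanishing set $\{\rho=0\}$. Since $\rho\ge0$ is $C^1$, at every point where $\rho$ vanishes one also has $\nabla\rho=0$, and the hypothesis $F(x,0,p)=0$ forces, via a Taylor expansion in the second argument, a factorization $F(x,z,p)=zG(x,z,p)$ with $G\in C^1$. This structure guarantees that every integrand appearing in the first-order expansion of $\mathcal{F}[\rho_t]$ is of first order in $\rho_t$ near $\{\rho=0\}$, which legitimises differentiation under the integral sign uniformly for $t$ small and rules out any pathology arising from the degeneracy of $\mu$.
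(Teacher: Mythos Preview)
The paper does not actually prove this lemma: its entire proof is the line ``See Lemma 10.4.1 in \cite{ambrosio2006gradient}.'' Your proposal supplies the standard variational argument behind that citation --- perturb the identity by $H_t=I+t\xi$, use the two-sided subdifferential inequality to pin down the first variation, and identify the latter with the Euler--Lagrange expression by the continuity equation and integration by parts --- so in substance you are reproducing exactly what the cited reference does.

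One technical point worth tightening: with only $\rho\in C^1$, the pointwise expansion $\nabla\rho_t=\nabla\rho-t\nabla(\rho\xi)_x+O(t^2)$ and the expression $(\rho\xi)_{xx}$ are not available classically, so ``expanding $F(x,\rho_t,\nabla\rho_t)$ to first order'' and ``integrating by parts twice'' are formal as written. The clean way around this is to change variables $y=H_t(x)$ first, which turns $\mathcal{F}[H_t\#\mu]$ into an integral whose integrand is $C^1$ in $t$ uniformly in $x$ (it involves only $\rho,\rho',\xi,\xi',\xi''$ and $F$), so differentiation under the integral is immediate; the resulting expression then matches $\int_\T\zeta\,\xi\,d\mu$ and identifies $\zeta$ with $\nabla\frac{\delta\mathcal{F}}{\delta\rho}$ in the distributional sense against test functions of the form $\rho\xi$. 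Your final paragraph about the vanishing set is harmless but unnecessary: since the conclusion is only $\mu$-a.e.\ equality and all integrands are continuous on the compact torus, no special treatment of $\{\rho=0\}$ is needed.
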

\begin{proof}
See Lemma 10.4.1. in \cite{ambrosio2006gradient}.
\end{proof}

Now we can define the notion of gradient flow for a functional $\phi$
\begin{definition}
We say that a map $\mu_t\in AC^2((0,\infty),\mathcal{P}(\T))$ is a solution to the gradient flow equation, if the vector field $v$ from Theorem~\ref{thm:abscontinuitytan} satisfies
$$
v(\cdot,t)\in\partial \phi(\mu_t)\;\;\forall t>0.
$$
\end{definition}
Now, in the $\lambda$-convex case, we can make the connection between gradient flows and curves of maximal slope
\begin{theorem}\label{thm:lambdaconvexflows}
If $\phi$ is $\lambda$-convex, then $\mu_t$ is a curve of maximal slope with respect to $|\partial\phi|$, if and only if, $\mu_t$ is a gradient flow and $\phi(\mu_t)$ is equal a.e. to a function of bounded variation.

Moreover, given two gradient flows $\mu_t^1$ and $\mu_t^2$, such that $\mu_t^1\to\mu_1$ and $\mu_t^2\to\mu_2$ as $t\to0$, then
$$
d_2(\mu_t^1,\mu_t^2)\le e^{-\lambda t}d_2(\mu^1,\mu^2).
$$
In particular, there is a unique gradient flow $\mu_t$ with initial condition $\mu_0$ and it satisfies the maximal slope condition \eqref{eq:maxslope} with equality.
\end{theorem}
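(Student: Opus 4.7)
The plan is to establish three interlocking claims: (a) for $\lambda$-convex $\phi$, a curve of maximal slope coincides with a gradient flow (once $\phi\circ\mu_t$ is BV-equivalent), (b) any two gradient flows contract at rate $e^{-\lambda t}$, and (c) uniqueness and the maximal-slope equality follow from (a) and (b). The tools in hand are Theorem~\ref{thm:convexstrongupgrad} (so that $|\partial\phi|$ is a strong upper gradient), Theorem~\ref{thm:abscontinuitytan} (identifying the Wasserstein tangent velocity $v$ with $\|v\|_{L^2_{\mu_t}}=|\mu'(t)|$), and the characterization of strong subdifferentials via Definition~\ref{def:strongsub}.

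For the equivalence in (a), the ``gradient flow $\Rightarrow$ maximal slope'' direction proceeds via a chain rule: if $v(\cdot,t)\in\partial\phi(\mu_t)$ in the strong sense and $v\in\mathrm{Tan}_{\mu_t}\mathcal{P}(\T)$, then differentiating $\phi(\mu_t)$ along the continuity equation and applying Definition~\ref{def:strongsub} gives $(\phi\circ\mu)'(t)=-\|v(\cdot,t)\|_{L^2_{\mu_t}}^2$ (with the paper's sign convention); since $|\partial\phi|(\mu_t)\le\|v(\cdot,t)\|_{L^2_{\mu_t}}=|\mu'(t)|$ in general, integrating and using Young's inequality $ab\le\tfrac12 a^2+\tfrac12 b^2$ forces equality, yielding \eqref{eq:maxslope}. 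Conversely, if $\mu_t$ is a curve of maximal slope, Theorem~\ref{thm:convexstrongupgrad} produces the reverse inequality
\[
\phi(\mu_s)-\phi(\mu_t)\le\int_s^t|\partial\phi|(\mu_r)\,|\mu'(r)|\,dr\le\int_s^t\frac{|\mu'(r)|^2}{2}+\frac{|\partial\phi|(\mu_r)^2}{2}\,dr,
\]
and comparison with \eqref{eq:maxslope} saturates Young's inequality almost everywhere, giving $|\mu'(t)|=|\partial\phi|(\mu_t)$ and $(\phi\circ\mu)'(t)=-|\partial\phi|(\mu_t)|\mu'(t)|$. The $L^2_{\mu_t}$ tangent velocity $v$ of Theorem~\ref{thm:abscontinuitytan} must then realize the subdifferential; a standard approximation argument, testing the definition of $\partial\phi$ against smooth perturbations $H=I+\varepsilon\nabla\psi$ and passing to the limit, upgrades this to $v(\cdot,t)\in\partial\phi(\mu_t)$.

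The contraction estimate (b) is derived from the Evolution Variational Inequality (EVI), which is the analytic incarnation of $\lambda$-convexity along $\W$-geodesics. Given any gradient flow $\mu_t$ of $\phi$ and any reference measure $\sigma\in\mathcal{P}(\T)$, differentiating $d_2^2(\mu_t,\sigma)/2$ along the geodesic joining $\mu_t$ to $\sigma$ and using the first variation of the Wasserstein distance together with the subdifferential characterization of $v$ gives
\[
\tfrac{1}{2}\tfrac{d}{dt}d_2^2(\mu_t,\sigma)+\tfrac{\lambda}{2}d_2^2(\mu_t,\sigma)\le\phi(\sigma)-\phi(\mu_t).
\]
Applying EVI to $\mu_t^1$ with $\sigma=\mu_t^2$ and to $\mu_t^2$ with $\sigma=\mu_t^1$ and summing cancels the $\phi$-terms, yielding $\tfrac{d}{dt}d_2^2(\mu_t^1,\mu_t^2)\le-2\lambda\,d_2^2(\mu_t^1,\mu_t^2)$; Gronwall delivers the stated $e^{-\lambda t}$ bound. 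Uniqueness in (c) is then immediate by taking $\mu_0^1=\mu_0^2$, and the energy equality in \eqref{eq:maxslope} has already been obtained in part (a). The main obstacle is the rigorous derivation of the EVI: one must justify differentiating $d_2^2(\mu_t,\sigma)$ along an absolutely continuous Wasserstein curve whose velocity is only a measurable tangent field, and combine this with $\lambda$-convexity evaluated on a Kantorovich-optimal geodesic interpolation — a delicate exercise in the chain rule for $\W$, carried out in detail in \cite{ambrosio2006gradient}.
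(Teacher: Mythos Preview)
The paper does not prove this theorem at all: its entire proof is the sentence ``See Theorem 11.1.3 and Theorem 11.1.4 in \cite{ambrosio2006gradient}.'' Your sketch is precisely an outline of the arguments found there --- the chain rule combined with Young's inequality for the equivalence, and the Evolution Variational Inequality plus Gronwall for the contraction --- so the approaches coincide, with yours simply unpacking what the paper delegates to the reference. Your sketch is correct at the level of strategy; as you yourself acknowledge, the genuinely technical steps (the Wasserstein chain rule for $t\mapsto d_2^2(\mu_t,\sigma)$ and for $t\mapsto\phi(\mu_t)$, and the passage from $\lambda$-convexity along geodesics to the EVI) are exactly the places where one must invoke \cite{ambrosio2006gradient} anyway.
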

\begin{proof}
See Theorem 11.1.3 and Theorem 11.1.4 in \cite{ambrosio2006gradient}.
\end{proof}

\section{Lower semi-continuity of $\mathcal{G}^\eps$}\label{sec:lscG}
In this section, we complete the proof of Proposition~\ref{prop:existenceeps}
\begin{proof}[Proof of Proposition~\ref{prop:existenceeps}(continuation)]

To lighten the notations, we give a proof for the case $\eps=1$, and drop the $\eps$ dependence.

The existence of solutions to \eqref{eq:epsflow} is proved by considering the uniform JKO scheme starting from $\nu_i$, with step $\tau>0$. Namely, we define inductively
$$
\mu^0_{\tau}=\nu^1_i,\;\; \mu_{\tau}^{n+1}=\argmin_{\rho\in \mathcal{P}(\T)}\{d_2^2(\mu^n_{\tau},\rho)+2\tau E(\rho)\}.
$$
By Lemma 1.2 in \cite{otto1998lubrication}, we know that $\mu_\tau^{n+1}$ solves
$$
\frac{\mu_\tau^{n+1}-\mu_\tau^{n}}{\tau}=(\mu_\tau^{n+1}(W'(\mu_\tau^{n+1})-\mu_{\tau xx}^{n+1})_x)_x,
$$
in the sense of distributions. Schauder's estimates yield $\mu_\tau^{n+1}\in C_{loc}^4(\{\mu_\tau^{n+1}>0\})$.

The existence of a solution to \eqref{eq:epsflow} follows from Theorem 1 in \cite{lisini2012cahn}, proven by defining $\nu$ as any accumulation point of the piecewise constant interpolation of $\{\mu_\tau^n\}_{n=0}^\infty$. Note that $\nu$ may not be unique, so we fix such a $\nu$ and a corresponding sequence of $\tau\to0$, for which the constant interpolation of $\{\mu_\tau^n\}_{n=0}^\infty$ converges to $\nu$.

Subsequently, we define the De Giorgi variational interpolation by
$$
\overline{\mu}_\tau(t)=\argmin_{\rho\in \mathcal{P}(\T)} \{d_2^2(\mu^n_{\tau},\rho)+2(t-(n-1)\tau) E(\rho)\}\;\;\mbox{when}\; t\in((n-1)\tau,n\tau).
$$
By Lemma 3.1.3 and 3.2.2 in \cite{ambrosio2006gradient}, we know that $E$ is strongly subdifferentiable at $\overline{\mu}_\tau(t)$ for every $t>0$ (see Definition~\ref{def:strongsub}), that $\overline{\mu}_\tau(t)\to\nu(t)$ for all $t\ge0$, and that for every $n\in\N$,
$$
E(\mu^n_\tau)+\frac{1}{2\tau}\sum_{k=1}^n d^2_2(\mu_\tau^n,\mu_\tau^{n-1})+\frac{1}{2}\int_0^{n\tau} |\partial E(\overline{\mu}_\tau(t))|^2\;dt\le E(\mu^0_{\tau})=E(\nu_i). 
$$

Moreover, by Proposition 4.1 in \cite{lisini2012cahn}, we know that $\overline{\mu}_\tau(t)\in H^2$ for every $t>0$. Therefore, because of the strong subdifferentiability we know that by Lemma~\ref{lem:subdiff} and Remark~\ref{rem:relationGepssubdiff}
$$
\mathcal{G}(\overline{\mu}_\tau(t))^2\le |\partial E(\overline{\mu}_\tau(t))|^2=\int_\T |(W'(\overline{\mu}_{\tau}(t))-\overline{\mu}_{\tau xx}(t))_x|^2\overline{\mu}_\tau(t)\;dx \;\;\forall t>0.
$$
We deduce 
$$
E(\mu^n_\tau)+\frac{1}{2\tau}\sum_{k=1}^n d^2_2(\mu_\tau^n,\mu_\tau^{n-1})+\frac{1}{2}\int_0^{n\tau} \mathcal{G}(\overline{\mu}_\tau(t))^2\;dt\le E(\nu_i). 
$$
and the Energy inequality \eqref{eq:epsgradflow} follows by taking the limit $\tau\to0$. More precisely, the metric derivative term in the Energy inequality \eqref{eq:epsgradflow} follows exactly as in the proof of Theorem 2.3.3 in \cite{ambrosio2006gradient}. The term involving $\mathcal{G}$ follows from Fatou's Lemma and from the lower semicontinuity proven in Lemma~\ref{lem:lscG} below, using that $\overline{\mu}_\tau(t)\in H^2(\T)\cap C^3_{loc}(\{\overline{\mu}_\tau(t)>0\})$ for every $t>0$ and that $\overline{\mu}_\tau(t)\to \nu(t)$ for every $t\ge0$.
\end{proof}

\begin{lemma}\label{lem:lscG}
Given $\{\mu_n\}_{n\in\N}$, such that $\mu_n\in H^2\cap C^3_{loc}\{\mu_n>0\}$, $\sup_{n\in\N}|\mu_n|_{H^1}<C$ and that $\mu_n\to\mu$ in $\W$, then
$$
\liminf_{n\to\infty}\mathcal{G}^\eps(\mu_n)\ge\mathcal{G}^\eps(\mu)
$$
\end{lemma}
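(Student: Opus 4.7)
My plan is to extract the minimizing vector fields $g_n$ associated with $\mu_n$, pass to their weak $L^2$ limit $g$, and identify $g$ as an element of $\mathcal{T}^\eps(\mu)$ satisfying $\|g\|_2\le\liminf_n\mathcal{G}^\eps(\mu_n)$. I would first reduce to the nontrivial case $\liminf_n\mathcal{G}^\eps(\mu_n)<+\infty$ and, up to a subsequence, assume $\sup_n\mathcal{G}^\eps(\mu_n)\le C$. Combined with $|\mu_n|_{H^1}\le C$, Proposition~\ref{prop:infty} together with its corollary yield uniform $L^\infty$ bounds on $\mu_n$ and uniform $H^1\hookrightarrow C^{1/2}$ bounds on $G^\eps(\mu_n)$. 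By Arzel\`a--Ascoli and Rellich, I extract a further subsequence such that $\mu_n\to\mu$ uniformly, $\mu_{nx}\rightharpoonup\mu_x$ weakly in $L^2$, and $G^\eps(\mu_n)\to G^*$ uniformly for some $G^*\in C^{1/2}(\T)$.

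Next I would pick $g_n\in\mathcal{T}^\eps(\mu_n)$ with $\sqrt{\mu_n}g_n=G^\eps(\mu_n)_x$ and $\|g_n\|_2\le\mathcal{G}^\eps(\mu_n)+1/n$. Weak $L^2$-compactness gives $g_n\rightharpoonup g$, and after redefining $g$ to vanish on $\{\mu=0\}$ (without increasing its norm) I have $\|g\|_2\le\liminf\mathcal{G}^\eps(\mu_n)$. The uniform convergence of $\sqrt{\mu_n}$, together with the weak convergence of $g_n$, yields $\sqrt{\mu_n}g_n\rightharpoonup\sqrt{\mu}g$ weakly in $L^2$, and since $G^\eps(\mu_n)_x\to G^*_x$ in distributions, this forces $\sqrt{\mu}g=G^*_x$ as distributions on $\T$.

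The main step -- and the central obstacle -- is to identify $G^*_x=G^\eps(\mu)_x$, which would place $g$ in $\mathcal{T}^\eps(\mu)$ and finish the proof. Testing $-\int G^\eps(\mu_n)\phi_x$ against $\phi\in C^\infty(\T)$ and expanding through $G^\eps(\mu_n)=Q'(\mu_n)+\frac{3\eps^2}{2}\mu_{nx}^2-\eps^2(\mu_n\mu_{nx})_x$, every term passes to its natural limit using uniform convergence of $\mu_n$ and weak $L^2$ convergence of $\mu_{nx}$, except for $\int\mu_{nx}^2\phi_x$, whose limit is $\int\phi_x\,d\sigma$ with $\sigma$ the $w^*$-limit of $\mu_{nx}^2\,dx$. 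Since $\sigma\ge\mu_x^2\mathcal{L}$, identification thus reduces to showing that the defect measure $\tau:=\sigma-\mu_x^2\mathcal{L}$ satisfies $\tau_x=0$ as a distribution, equivalently $\mu_{nx}\to\mu_x$ strongly in $L^2$.

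To establish this I would split the argument at the set $\{\mu=0\}$. On any compact $K\subset\{\mu>0\}$ the density is eventually bounded below uniformly in $n$, so the identity $g_n=\sqrt{\mu_n}(W'(\mu_n)-\eps^2\mu_{nxx})_x$ together with the $\mathcal{G}^\eps$-bound upgrades the $H^1$ weak convergence of $\mu_n$ on $K$ to strong $L^2$ convergence of $\mu_{nx}$ (paralleling the argument of Proposition~\ref{prop:H1loc}); hence $\tau$ is supported on $\{\mu=0\}$. On $\operatorname{int}\{\mu=0\}$, where $\mu_n\to 0$ uniformly, the terms $Q'(\mu_n)$ and $\eps^2(\mu_n\mu_{nx})_x$ tested against $\phi$ vanish in the limit, giving the explicit identification $\sigma=\frac{2G^*}{3\eps^2}\,dx$ there; meanwhile $\sqrt{\mu}g=G^*_x=0$ on $\operatorname{int}\{\mu=0\}$ forces $G^*$ to be locally constant on each component. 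Combining this piecewise-constant structure with the fact that $\tau$ supported in the strict subset $\{\mu=0\}\subsetneq\T$ (since $\int\mu=1$) cannot be a nonzero constant multiple of Lebesgue, I conclude $G^*$ must vanish on $\operatorname{int}\{\mu=0\}$ and hence $\tau=0$. This yields $G^\eps(\mu)_x=\sqrt{\mu}g$, so $g\in\mathcal{T}^\eps(\mu)$ and $\mathcal{G}^\eps(\mu)\le\|g\|_2\le\liminf_n\mathcal{G}^\eps(\mu_n)$.
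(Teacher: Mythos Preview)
Your outline up through the identification $G^*_x=\sqrt{\mu}\,g$ and the reduction on $\{\mu>0\}$ is fine and matches the paper. The gap is in your treatment of $\{\mu=0\}$, where the argument does not close.

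You correctly reduce to showing that the defect measure $\tau=\sigma-\mu_x^2\mathcal L$ has $\tau_x=0$ as a distribution (note: this is \emph{not} equivalent to strong $L^2$ convergence of $\mu_{nx}$; it is equivalent to $\tau$ being a constant multiple of Lebesgue). You then establish that $\tau$ is supported on $\{\mu=0\}$ and that on each component of $\operatorname{int}\{\mu=0\}$ it equals $\tfrac{2G^*}{3\eps^2}\,\mathcal L$ with $G^*$ a (possibly nonzero) constant there. But from this you cannot conclude $G^*=0$, nor $\tau=0$, nor even $\tau_x=0$: a nonnegative measure supported on a proper closed set, piecewise constant on the interior, is perfectly consistent with nonzero component constants and with additional mass concentrated on $\partial\{\mu=0\}$. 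Your sentence ``$\tau$ \ldots cannot be a nonzero constant multiple of Lebesgue'' would be relevant only \emph{after} one knows $\tau_x=0$; as written it is circular. Moreover, nothing in your argument controls the case where $\{\mu=0\}$ has empty interior but is nontrivial (isolated zeros, or a fat Cantor set), where $\tau$ could be purely singular.

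The paper handles $\{\mu=0\}$ by a genuinely different mechanism that you are missing. First it extracts a uniform \emph{Lipschitz} bound $\sup_n\|\mu_{nx}\|_\infty<\infty$: at a point $x_0$ realizing $\|\mu_{nx}\|_\infty$ one has $\mu_n(x_0)>0$ (since $\mu_n\in C^1$, $\mu_n\ge0$), hence by the assumed $C^3_{loc}$ regularity $\mu_{nxx}(x_0)=0$, and evaluating $G^\eps(\mu_n)$ there gives $|\mu_{nx}(x_0)|^2\le 2\|G^\eps(\mu_n)\|_\infty\le C$. This yields $\mu^2\in W^{2,\infty}$, and Stampacchia's lemma then gives $G^\eps(\mu)=0$ a.e.\ on $\{\mu=0\}$. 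The remaining task, showing $H=G^\eps(\mu)$ near points where $\mu=0$ but $H\neq0$, is the hard step: the paper shows that in a small interval around such a point one must have $\mu_n>0$ for all large $n$ (otherwise $\mu_n\notin H^2$), and then invokes \emph{Bernis' integral inequality} to convert the weighted bound $\int\mu_n|\mu_{nxxx}|^2\le C$ into a uniform $W^{2,3}$ bound on that interval, forcing $C^{1,\alpha}$ convergence of $\mu_n$ and hence $G^\eps(\mu_n)\to G^\eps(\mu)$ there. Both the Lipschitz bound and the Bernis inequality are essential ingredients that your defect-measure argument does not supply.
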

\begin{proof}[Proof of Lemma~\ref{lem:lscG}]
Without loss of generality, we will assume that the potential $W=0$ and $\eps=1$ and that $\liminf_{n\to\infty}\mathcal{G}(\mu_n)<\infty$. We can always take $\mu_{n_i}$, such that 
$$
\lim_{i\to\infty}\mathcal{G}(\mu_{n_i})=\liminf_{n\to\infty}\mathcal{G}(\mu_n)\mbox{ and }\sup_i\mathcal{G}(\mu_{n_i})\le C\mbox{for some $C$}.
$$
From now on, we drop the dependence on $i$. 

Because the $\mu_n$ are probability measures, which are uniformly bounded in $H^1$, we know that
$$
\sup_n||\mu_n||_\infty<C.
$$

Let $g_n\in\mathcal{T}(\mu_n)$ be such that $||g_n||_2=\mathcal{G}(\mu_n)$ (see \eqref{eq:calGeps}) (we can always find such a $g_n$, because $\mathcal{T}(\mu_n)$ is closed) and by definition,
$$
\int |G(\mu_n)_x|^2\le ||\mu_n||_\infty ||g_n||_2^2=||\mu_n||_\infty \mathcal{G}(\mu_n)^2\le C.
$$
Moreover, as
$$
\int G(\mu_n)=\frac{3}{2}|\mu_n|_{H^1}^2<C,
$$
we conclude that
$$
\sup_n ||G(\mu_n)||_{H^1(\T)}<C,
$$
in particular $G(\mu_n)$ is bounded in $L^\infty$ and in $C^\frac{1}{2}(\T)$ uniformly in $n$.

Now, as $\mu_n\in H^2(\T)\subset C^{1,\frac{1}{2}}(\T)$, we know that if $\mu_{nx}(x_0)\ne0$, then $\mu_n(x_0)>0$. So, if $x_0$ is a max of $|\mu_{nx}|$, then, by the hypothesis that $\mu_n\in C^3_{loc}\{\mu_n>0\}$, we have enough regularity to assure that $\mu_{nxx}(x_0)=0$. Then, we can bound
$$
||\mu_{nx}||_\infty^2=|\mu_{nx}(x_0)|^2=2G^\lambda(x_0)\le2||G^\lambda||_\infty\le C.
$$
Therefore,
$$
\sup_n||\mu_{nx}||_\infty\le C,
$$
so we can conclude that $\mu$ is a Lipschitz function and
$$
||\mu||_{Lip}\le C.
$$

Up to subsequence, we know that
$$
G(\mu_n)\to H\mbox{ in $C^\alpha$ for all $\alpha<\frac{1}{2}$,}
$$
and
$$
g_n\to g\mbox{ weakly in $L^2$.}
$$
Because
$$
G(\mu_n)_x=\sqrt{\mu_n}g_n
$$
and $\mu_n\to\mu$ uniformly, we can pass to the limit in the sense of distributions to get
$$
H_x=\sqrt{\mu}g.
$$
Moreover, we know that
$$
||g||_2\le\liminf||g_n||_2,
$$
so it only remains to prove that
$$
g\in\mathcal{T}(\mu),
$$
or, equivalently, that
$$
H=G(\mu).
$$
The rest of the proof is devoted to proving this equality.

Because $\mu_n\in C^3_{loc}(\{\mu_n>0\})$, we can use Remark~\ref{rem:cumbersome} to obtain
$$
\int_{\mu_n>0} \mu_n|\mu_{nxxx}|^2\;dx\le\mathcal{G}(\mu_n),
$$
then we have, up to subsequence,
$$
\mu_n\to\mu \in C^{2}(\{\mu>\lambda\}).
$$
Therefore,
$$
G(\mu)=H\mbox{   in }\{\mu>0\}.
$$
Of course, the set $\{\mu=0\}$ requires a more delicate argument.

First, we prove that $G(\mu)=0$ a.e. in $\{\mu=0\}$. By rewriting
\begin{equation}\label{eq:G}
G(\mu_n)=-\left(\frac{\mu_n^2}{2}\right)_{xx}+\frac{3}{2}|\mu_{nx}|^2,
\end{equation}
and using the fact that $\mu_n$ is uniformly Lipschitz, then we can say that
$$
\sup_n ||\mu_n^2||_{W^{2,\infty}}<C,
$$
therefore
$$
\mu^2\in W^{2,\infty}.
$$
Stampacchia's lemma states that if $f\in W^{1,p}$, then $f_x=0$ a.e. in $\{f=0\}$ (See Lemma A.4. Chapter II \cite{kinderlehrer1980introduction}), hence $G(\mu)=0$ a.e. in $\{\mu=0\}$.

Therefore, we only need to show that $H=0$ a.e. in $\{\mu=0\}$. Instead, we prove something seemingly stronger, more specifically, we prove that if $x_0$ is such that $|H(x_0)|=\delta\ne0$ and $\mu(x_0)=0$, then there exists a non-trivial interval $(a_0,b_0)$ around $x_0$ such that $H=G(\mu)$ in $(a_0,b_0)$. The rest of the proof is devoted to proving this last statement.

Let $x_0$ be such that $|H(x_0)|=\delta\ne0$ and $\mu_0(x_0)=0$, then since $G(\mu_n)$ converges to $H$ uniformly there exists $n_0$, such that $n>n_0$ implies
$$
|G(\mu_n)(x_0)|\ge\frac{\delta}{2}.
$$
Given $\beta>0$, to be chosen later, we consider the open sets
$$
A^n_\beta=\{x:\mu_n<\beta\},
$$
and
$$
A^\infty_\beta=\{x:\mu<\beta\}=\cup_i (a_i^\beta,b_i^\beta),
$$
written as the union of its connected components. From now on, we suppress the dependence on $\beta$ on the end points of the intervals.

Since $x_0\in A^\infty_\beta$, there exists a unique $i_0$ which we take to be $0$, such that
$$
x_0\in (a_0,b_0),
$$
and 
$$
\mu(a_0)=\mu(b_0)=\beta.
$$
As $\mu_n\to \mu$ uniformly, then for all $n$ big enough
$$
(a_0,b_0)\subset A^n_{2\beta}.
$$
and
$$
\mu_n(a_0)>\frac{\beta}{2},\;\;\;\ \mu_n(b_0)>\frac{\beta}{2}.
$$
Using the definition of $\mathcal{G}(\mu_n)$, we can bound the oscillations of $G(\mu_n)$ in the set
$$
osc_{(a_0,b_0)}G(\mu_n)\le \int_{a_0}^{b_0}|G(\mu_n)_x|\le \mathcal{G}(\mu_n)^\frac{1}{2} \left(\int_{a_0}^{b_0} \mu_n\right)^{\frac{1}{2}}\le C\sqrt{2\beta}.
$$
Then,
$$
|G(\mu_n)|\ge G(\mu_n)(x_0)-C\sqrt{2\beta}\;\;\;\;\mbox{in}\;\;(a_0,b_0).
$$
By taking $\beta$ small enough, we deduce that
$$
|G(\mu_n)|\ge\kappa>0\;\;\;\;\mbox{in}\;\;(a_0,b_0).
$$

If $\mu_n$ would vanish at any point in $(a_0,b_0)$, it would contradict the hypothesis that $\mu_n\in H^2$. We prove this by contradiction, if assume that $\mu_n$ vanishes at $y_0\in(a_0,b_0)$, then, because $\mu_n\in C^{1,\frac{1}{2}}$, $\mu_{nx}(y_0)=0$ and there exists $\eps_0$ such that
$$
|\mu_{nx}(x)|^2<\kappa\;\;\;\;\mbox{for every $x\in (y_0-\eps_0,y_0+\eps_0).$}
$$
Therefore,
$$
|\mu_n(x)\mu_{nxx}(x)|\ge |G(\mu_n)|-\frac{|\mu_{nx}|^2}{2}\ge \frac{\kappa}{2}\;\;\;\;\mbox{for every $x\in (y_0-\eps_0,y_0+\eps_0)$.}
$$
Finally, using that $\mu_n$ is Lipschitz and $\mu_n(y_0)=0$ we know that
$$
\mu_n(x)\le C(x-y_0).
$$
We deduce that
$$
|\mu_{nxx}(x)|^2\ge \frac{C}{(x-y_0)^2}\;\;\;\;\mbox{for every $x\in (y_0-\eps_0,y_0+\eps_0)$,}
$$
which is not integrable at $y_0$ and thus contradicts the fact that $\mu_n$ is in $H^2$. So, we can conclude that
$$
\mu_n> 0\;\;\;\;\mbox{in}\;\;(a_0,b_0).
$$

Now we apply Theorem 4.3 of Bernis' integral inequalities \cite{bernis1996integral}, which proves that
$$
|v|_{W^{2,3}(a_0,b_0)}\le \int_{a_0}^{b_0}v|v_{xxx}|^2\;dx
$$
for $v\in C^3$, with $v>0$ in $(a_0,b_0)$, such that $v'(a_0)=v'(b_0)=0$. 

Note that we cannot use this result directly because we do not know that $\mu_n'(a_0)=\mu_n'(b_0)=0$. So, to be able to apply the Theorem, we consider $\phi_n$ smooth, such that 
\begin{itemize}
\item $\phi_n(a_0)=\phi_n(b_0)=\frac{\beta}{4}$.

\item $\phi_n'(a_0)=\mu_n(a_0)$,  $\phi_n'(b_0)=\mu_n(b_0)$.

\item $\phi_n< \max\left(0, \frac{\beta}{2}-|\mu_n|_{lip}(x-a_0),\frac{\beta}{2}-|\mu_n|_{lip}(b_0-x)\right)<\mu_n$.
\end{itemize}
Because $||\mu_{nx}||_\infty$ is uniformly bounded, we get that $\phi_n$ is uniformly bounded in $W^{2,3}$ and $H^3$. Moreover, $v_n=\mu_n-\phi_n$ satisfies the hypothesis of \cite{bernis1996integral}, then
$$
|v_n|_{W^{2,3}(a_0,b_0)}
\le
\int_{a_0}^{b_0}(\mu_n-\phi_n)|(\mu_n-\phi_n)_{xxx}|^2\;dx
\le
\int_{a_0}^{b_0}\mu_n|\mu_{nxxx}|^2\;dx+|\mu_n|_\infty|\phi_n|_{H^3}.
$$
Also,
$$
|\mu_n|_{W^{2,3}(a_0,b_0)}\le|v_n|_{W^{2,3}(a_0,b_0)}+|\phi_n|_{W^{2,3}(a_0,b_0)},
$$
so we finally deduce the following uniform bound for $\mu_n$ in the interval $(a_0,b_0)$:
$$
\sup_n||\mu_n||_{W^{2,3}(a_0,b_0)}<C.
$$

This implies, in particular, that up to subsequence, $\mu_n$ converges to $\mu$ uniformly in $C^{1,\alpha}$ and so
$$
\mu_{nx}\to \mu_x \mbox{ uniformly in $(a_0,b_0)$},
$$
which combined with the fact that 
$$
(\mu_n^2)_{xx}\to (\mu^2)_{xx}\mbox{in $\mathcal{D}'$},
$$
yields, by passing to the limit in \eqref{eq:G}
$$
G(\mu_n)\to G(\mu) \mbox{ in $(a_0,b_0)$.}
$$
So, in particular
$$
H=G(\mu) \mbox{ in $(a_0,b_0)$.}
$$
\end{proof}

\bibliographystyle{plain}
\bibliography{Bibliography}
\end{document}